\newtheorem{theorem}{Theorem}
\newtheorem{lemma}[theorem]{Lemma}
\newtheorem{corollary}[theorem]{Corollary}
\newtheorem{conjecture}[theorem]{Conjecture}
\newtheorem{question}[theorem]{Question}
\pgfplotsset{compat=1.14}
\pgfplotsset{
    closed dot/.style = {
        only marks,
        mark = *,
        mark size = 1.5pt,
    },
    open dot/.style = {
        only marks,
        mark = *,
        fill = white,
        mark size = 1.5pt,
    },
}
\newcommand{\floor}[1]{\left\lfloor #1 \right\rfloor}
\newcommand{\ceil}[1]{\left\lceil #1 \right\rceil}
\newcommand{\norm}[1]{\left\lVert #1 \right\rVert}
\DeclareMathOperator{\aff}{aff} 
\DeclareMathOperator{\conv}{conv} 
\DeclareMathOperator{\vol}{vol} 
\DeclareMathOperator{\ppyr}{ppyr} 
\newcommand{\rvol}{\operatorname{vol_r}} 
\newcommand{\restate@theorem@name}{}
\newtheorem*{restate@theorem}{\restate@theorem@name}
\newenvironment{restatetheorem}[1]{
    \renewcommand{\restate@theorem@name}{Theorem~\ref{#1}}
    \begin{restate@theorem}
}{
    \end{restate@theorem}
}
\begin{document}

\title{
    Reconstruction of rational polytopes
    from the real-parameter Ehrhart function of its translates
}
\author{
    Tiago Royer
}
\date{}
\maketitle

\begin{abstract}
    When extending the Ehrhart lattice point enumerator $L_P(t)$
    to allow real dilation parameters $t$,
    we lose the invariance under integer translations
    that exists when $t$ is restricted to be an integer.
    This paper studies this phenomenon;
    in particular,
    it is shown that,
    for full-dimensional $P$,
    not only there are infinitely many different functions $L_{P + w}(t)$
    (for integer $w$),
    but that for rational $P$ the collection of these functions identifies $P$ uniquely.
\end{abstract}

\section{Introduction}

Given a polytope $P \subseteq \mathbb R^d$,
the classical Ehrhart lattice point enumerator $L_P(t)$ is defined by
\begin{equation*}
    L_P(t) = \#(tP \cap \mathbb Z^d), \qquad \text{integer $t \geq 0$.}
\end{equation*}
Here,
$\#(A)$ is the number of elements in $A$
and $tP = \{tx \mid x \in P\}$ is the dilation of $P$ by $t$.
This function is well-studied
(see~\cite{ccd}, for example),
and more recently some papers have studied the extension
where $t$ is allowed to be any real number~%
\cite{Linke2011, BBKV2013, HenkLinke2015, BBLKV2016, Borda2016}.

To minimize confusion,
we will denote real dilation parameters with the letter $s$,
so that $L_P(t)$ denotes the classical Ehrhart function
and $L_P(s)$ denotes the extension considered in this paper.
Thus,
$L_P(t)$ is just the restriction of $L_P(s)$ to integer arguments.

It is clear from the definition that
the classical Ehrhart function is invariant under integer translations;
that is,
for every real polytope $P$ and every integer vector $w$,
we have
\begin{equation*}
    L_{P + w}(t) = L_P(t)
\end{equation*}
for all integer $t$.
This is not true for the real Ehrhart function $L_P(s)$.
If $P$ contains the origin,
it is easy to see that $L_P(s)$ is a nondecreasing function.
It is an easy exercise to show that the one-sided limit
\begin{equation*}
    L_P(0^+) = \lim_{s \to 0^+} L_P(s)
\end{equation*}
is zero if $P$ does not contain the origin.
If we agree that $L_P(0) = 1$ for all $P$,
then the function $L_P(s)$ will not be nondecreasing in this case.
Therefore,
if $P$ is,
for example,
an integer polytope which does not contain the origin,
and $v$ is any vertex of $P$,
then $L_{P - v}(s)$ is be nondecreasing
whereas $L_P(s)$ is not,
and thus surely these two functions are different.

In Section~\ref{sec:translation-variant},
this result is strengthened to the following.

\begin{theorem}
    \label{thm:rational-translation-variant}
    Let $P$ be a rational polytope.
    Then there exists an integer vector $w$
    such that the functions $L_{P + kw}(s)$ are all distinct
    for $k \geq 0$.
\end{theorem}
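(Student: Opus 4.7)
My plan is to find an integer vector $w$ and use the location of the first discontinuity of $L_{P+kw}$ as a distinguishing invariant between different $k \geq 0$.

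I would define
\[
    \tau_k = \inf\{s > 0 : L_{P+kw}(s) \neq L_{P+kw}(0^+)\},
\]
which depends only on $L_{P+kw}$ as a function of $s$ and is therefore an invariant: equality of functions forces equality of $\tau_k$. Geometrically, $\tau_k$ is the smallest $s > 0$ at which $s(P + kw)$ gains or loses a lattice point relative to its $s \to 0^+$ configuration. Writing $P = \{x : a_i \cdot x \leq b_i\}_{i=1}^m$ with primitive integer $a_i$ and rational $b_i$, a nonzero lattice point $z$ first enters $s(P+kw)$ at $s = 1/c_+(z, k)$, where (subject to the line $\{cz - kw : c \in \mathbb{R}\}$ actually meeting $P$)
\[
    c_+(z, k) = \min_{i \,:\, a_i \cdot z > 0} \frac{b_i + k(a_i \cdot w)}{a_i \cdot z}, \qquad \tau_k = \frac{1}{\max_{z \in \mathbb{Z}^d \setminus \{0\}} c_+(z, k)}.
\]
For each fixed $z$, the map $k \mapsto c_+(z, k)$ is piecewise-linear concave; the outer maximum is itself piecewise-linear in $k$, with pieces corresponding to different optimizer pairs $(z^*_k, i^*_k)$, yielding $\tau_k$ as a rational of the form $(a_{i^*_k} \cdot z^*_k)/(b_{i^*_k} + k(a_{i^*_k} \cdot w))$.

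I would then choose $w$ so that this piecewise structure is well-controlled. For $w$ primitive and aligned with the direction of a vertex of $P$, the asymptotic optimizer stabilizes, so $\tau_k$ is a strictly decreasing reciprocal-of-affine function of $k$ for all $k \geq K_0$; this already distinguishes all but finitely many $k$. For the remaining small values, any accidental coincidence $\tau_k = \tau_{k'}$ would impose a nontrivial polynomial equation on $w$, cutting out a proper Zariski-closed subset of $\mathbb{R}^d$ whose integer points have density zero. Taking the union over the finitely many relevant pairs $(k, k')$ still yields a density-zero subset of $\mathbb{Z}^d$, so by a countable-avoidance argument one may choose $w \in \mathbb{Z}^d$ outside every such ``bad'' locus, ensuring all $\tau_k$ are pairwise distinct and hence all $L_{P+kw}$ are pairwise distinct.

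The main obstacle is verifying the non-degeneracy of each coincidence equation, i.e., that the ``bad'' locus is genuinely a proper subvariety rather than identically satisfied; this boils down to a finite case analysis over the optimizer pairs $(z^*, i^*)$ attaining $\tau_k$ and $\tau_{k'}$, using the rationality of $P$ to ensure the resulting diophantine conditions are not tautological.
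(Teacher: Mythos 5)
Your strategy---using the location $\tau_k$ of the first discontinuity of $L_{P+kw}$ as the distinguishing invariant---is genuinely different from the paper's, which instead recovers the volume of the pseudopyramid $\ppyr(P+kw)$ from $L_{P+kw}$ (Lemma~\ref{thm:different-pseudopyramid-volumes}) and shows this volume is \emph{strictly increasing} in $k$ once $w$ is a positive integer multiple of a point of $P$; that monotonicity means no coincidences ever have to be excluded. Your argument, by contrast, rests on two claims that are asserted but not established, and the second is not merely unverified---it can fail. First, the stabilization claim (that $\tau_k$ is eventually reciprocal-of-affine in $k$) is the load-bearing step, since it is what reduces you to finitely many coincidence pairs; proving it requires showing that for large $k$ the only lattice points that can be first to enter $s(P+kw)$ are positive multiples of $w$, which is true when $w$ is a primitive positive multiple of a rational point of $P$ but needs the observation that all other lattice points stay at distance at least some $\delta>0$ from the line $\mathbb{R}w$ while $sP$ has diameter $O(1/k)$ at the first contact time. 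Second, the non-degeneracy you flag as ``the main obstacle'' genuinely fails for some $w$: take $P=[0,1]\times[\tfrac12,\tfrac32]$ and $w=(m,0)$. Then $s(P+kw)$ contains no nonzero lattice point for $0<s<\tfrac23$ (its $y$-range lies in $(0,1)$), and at $s=\tfrac23$ its $x$-range $[\tfrac{2km}{3},\tfrac{2km}{3}+\tfrac23]$ always contains an integer while its $y$-range contains $1$; hence $\tau_k=\tfrac23$ for \emph{every} $k$ and $m$. So ``$\tau_k=\tau_{k'}$'' is not automatically a proper subvariety of $w$-space, and since the defining equations of your ``bad locus'' depend on which optimizer pairs $(z^*,i^*)$ are active---which in turn depends on $w$---the Zariski-avoidance argument as stated is circular.

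The gap is repairable within your framework, but not by the route you describe. If $w$ is a primitive positive multiple of a rational point of $P$ (handle $P=\{0\}$ separately), the stabilization argument gives $\tau_k=1/(k+u_1)$ for all $k\geq K_0$, where $u_1=\max\{u : uw\in P\}$; replacing $w$ by $Nw$ with $N$ large turns every index $k\geq 1$ into a stable one and makes all the $\tau_k$ pairwise distinct outright (and distinct from the fixed value $\tau_0$), with no avoidance argument at all. As written, however, both the stabilization and the non-degeneracy are left unproven, and those are exactly the points where the work lies.
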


This is in sharp contrast with the classical Ehrhart function,
where all the functions $L_{P + k w}(t)$ are the same.

Since $L_P(s)$ is nondecreasing if and only if $P$ contains the origin,
we have that $L_{P - v}(s)$ is nondecreasing
if and only if $v \in P$.
Therefore,
if we know the real Ehrhart functions of all real translates of $P$,
then we can reconstruct the polytope $P$.

The main result of this paper is the following.

\begin{theorem}
    \label{thm:rational-complete-invariant}
    Let $P$ and $Q$ be two rational polytopes such that,
    for every integer translation vector $w$,
    we have $L_{P + w}(s) = L_{Q + w}(s)$.
    Then $P = Q$.
\end{theorem}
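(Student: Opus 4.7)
The plan is to extract, from the hypothesis, enough combinatorial information about the lattice-point sets of integer dilates of $P$ and $Q$ to reconstruct both polytopes. The key observation is that specializing $s$ to a rational value and varying the integer shift $w$ recovers the distribution of lattice points of $pP$ across cosets of the sublattice $q\mathbb{Z}^d$.

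First, I would rewrite the hypothesis at rational $s = p/q$ with $\gcd(p, q) = 1$ and $p, q \geq 1$. Rescaling lattice points by $q$ inside the counting gives
\begin{equation*}
    L_{P + w}(p/q) = \#\bigl(pP \cap (-pw + q \mathbb{Z}^d)\bigr),
\end{equation*}
and similarly for $Q$. Because $p$ is a unit modulo $q$, the coset $-pw + q\mathbb{Z}^d$ exhausts $\mathbb{Z}^d / q\mathbb{Z}^d$ as $w$ ranges over $\mathbb{Z}^d$. The hypothesis thus yields, for every coprime pair $(p, q)$ with $p, q \geq 1$ and every $z \in \mathbb{Z}^d / q\mathbb{Z}^d$,
\begin{equation*}
    \#\bigl(pP \cap (z + q\mathbb{Z}^d)\bigr) = \#\bigl(pQ \cap (z + q\mathbb{Z}^d)\bigr).
\end{equation*}

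Next, I would use this to recover $pP \cap \mathbb{Z}^d$ for every positive integer $p$. Fix $M$ with $pP, pQ \subseteq [-M, M]^d$ and pick a prime $q > 2M + 1$ not dividing $p$. Since $pP$ has diameter less than $q$ in each coordinate direction, distinct lattice points of $pP$ lie in distinct residue classes modulo $q$, and likewise for $pQ$. Hence for every integer point $x \in [-M, M]^d$, we have $x \in pP$ exactly when $\#(pP \cap (x + q\mathbb{Z}^d)) = 1$, and similarly for $Q$; equality of the two counts gives $pP \cap \mathbb{Z}^d = pQ \cap \mathbb{Z}^d$.

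Finally, density closes the argument. Every rational $v \in P$ admits a denominator $n$ with $nv \in nP \cap \mathbb{Z}^d = nQ \cap \mathbb{Z}^d$, so $v \in Q$; since rational points are dense in the rational polytope $P$ and $Q$ is closed, $P \subseteq Q$, and the reverse inclusion is symmetric. I expect the main subtlety to lie in the first step's rescaling identity: one must verify carefully that the integer translates $w$ really do produce every residue class modulo $q\mathbb{Z}^d$ under $s = p/q$, after which the rest reduces to choosing $q$ large enough and invoking density.
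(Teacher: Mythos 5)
Your proof is correct, and it takes a genuinely different and considerably more elementary route than the paper. The identity $L_{P+w}(p/q) = \#\bigl(pP \cap (-pw + q\mathbb Z^d)\bigr)$ checks out, the coset $-pw + q\mathbb Z^d$ does sweep out all of $\mathbb Z^d/q\mathbb Z^d$ since $p$ is invertible mod $q$, and the large-prime trick correctly forces each residue class to meet $pP$ in at most one point, so the equal counts really do give $pP \cap \mathbb Z^d = pQ \cap \mathbb Z^d$ for every $p \geq 1$ and hence $P \cap \mathbb Q^d = Q \cap \mathbb Q^d$. The paper instead reconstructs the facet data of $P$ directly: it reduces to full-dimensional semi-rational polytopes and recovers each right-hand side $b_i$ by analyzing the locations and magnitudes of the jump discontinuities of $L_{P + kw_0}(s)$ inside carefully chosen windows $(\alpha_k, \alpha_k + \epsilon_k)$, using relative volumes of facets, simultaneous Diophantine approximation, and a uniqueness lemma for ``pseudo-Diophantine'' systems, then descends to lower dimensions by unimodular projection. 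What your approach buys is brevity, the observation that only rational dilation parameters $s$ are needed, and in fact more: since rational points are dense in any full-dimensional convex body, the same argument settles the paper's concluding conjecture for full-dimensional real polytopes. What the paper's heavier machinery buys is the semi-rational results (Theorems 4 and 5), where your final density step breaks down: a codimension-one semi-rational polytope such as $[0,1]^{d-1} \times \{\sqrt 2\}$ contains no rational points at all, so knowing $P \cap \mathbb Q^d$ says nothing, whereas the discontinuity analysis still recovers the irrational right-hand sides.
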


In other words,
if $P$ is a rational polytope,
then the Ehrhart functions $L_{P + w}(s)$ (for integer $w$) indentifies $P$ uniquely,
so we may reconstruct $P$ from the Ehrhart function of its integer translates.

We may also see this theorem as a first step
towards what Fernandes, Pina, Ramírez-Alfonsín and Robins call
the ``Hilbert's third problem for the unimodular group'' \cite{Cris2017}.
The conjecture is that,
if two polytopes have the same Ehrhart function,
then they must be piecewise unimodular images of each other.
Theorem~\ref{thm:rational-complete-invariant} shows that,
given more information,
the conclusion is, indeed, true.
In fact,
with the information provided,
the theorem concludes that the polytopes are actually the same
(not even up to translation).
This suggests that we may drop some of that information
and still conclude that the polytopes are piecewise unimodular images of each other.

\subsection{Non-rational polytopes}
\label{sec:non-rational-polytopes}

Although Ehrhart theory is usually concerned with rational polytopes,
there has been some effort in working with non-rational polytopes as well.
For example,
Borda~\cite{Borda2016} deals with simplices and cross-polytopes
with algebraic coordinates;
and \cite{DeSarioRobins2011} and~\cite{DiazLeRobins2016}
deal with arbitrary real polytopes and real dilations
but for the solid-angle polynomial.

Theorem~\ref{thm:rational-translation-variant}
may be extended to all real polytopes;
we just need to assume that the polytope is either full-dimensional
or has codimension $1$.
That is,
we have the following.

\begin{theorem}
    \label{thm:translation-variant}
    Let $P \subseteq \mathbb R^d$ be a real polytope
    which is either full-dimensional
    or has codimension $1$.
    Then there is an integral vector $w \subseteq \mathbb R^d$ such that
    the functions $L_{P + k w}(s)$ are different
    for all integers $k \geq 0$.
\end{theorem}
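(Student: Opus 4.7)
The plan is to adapt the strategy used for Theorem~\ref{thm:rational-translation-variant}: find an integer vector $w$ such that, for each pair $k \neq k'$, some specific dilation $s$ witnesses $L_{P+kw}(s) \neq L_{P+k'w}(s)$. Because the real Ehrhart function of a general real polytope lacks a quasi-polynomial structure, the argument has to work directly with the geometry of lattice points landing in the translates, rather than with coefficients of a closed-form expression.

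For the full-dimensional case, I would use the \emph{first-entrance dilation}
\[
    s_k := \inf\{s > 0 : s(P + kw) \cap \mathbb Z^d \neq \emptyset\}
\]
as the distinguishing statistic. After translating so that $0 \notin P$ and choosing $w$ of large enough norm that $-kw \notin P$ for every $k \geq 1$, one obtains $s_k > 0$ for all $k \geq 0$. Since $L_{P+kw}$ vanishes on $(0, s_k)$ but is positive just beyond $s_k$, distinct values $s_k \neq s_{k'}$ automatically yield distinct functions. Writing $s_k = 1/r_k$ with $r_k = \sup\{t > 0 : tv \in P + kw \text{ for some } v \in \mathbb Z^d \setminus \{0\}\}$, each coincidence $r_k = r_{k'}$ imposes an algebraic condition on the direction of $w$, and the plan is to show that this countable family of bad conditions still leaves infinitely many good integer vectors, so that a suitable $w \in \mathbb Z^d$ can be chosen.

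For the codimension-$1$ case, let $H$ be the affine hyperplane containing $P$. If $H$ is rational, the intersection $H \cap \mathbb Z^d$ is an affine sublattice and one reduces to Theorem~\ref{thm:rational-translation-variant} applied inside $H$ with the induced lattice. If $H$ is irrational, $sH \cap \mathbb Z^d = \emptyset$ for all but countably many $s$, so $L_{P+kw}$ is supported on a sparse set of dilations; the distinguishing information then lives in that sparse jump set, and the same general-position argument can be adapted to place the jumps of different translates at different dilations.

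The main obstacle I anticipate is the general-position genericity: one must rule out the coincidences $s_k = s_{k'}$ simultaneously for all countably many pairs $k < k'$. Each coincidence is a codimension-$1$ condition on $w$, but one has to verify that these conditions are all nontrivial---they can fail for polytopes with unfortunate symmetry along $w$---and that their countable union does not swallow all of $\mathbb Z^d$. The irrational codimension-$1$ subcase is also delicate, since the relevant lattice points may be genuinely absent, forcing careful bookkeeping with the sparse jump set of $L_{P+kw}$.
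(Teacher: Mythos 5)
There is a genuine gap, and it sits exactly where you flag it: the ``general-position'' step. Your distinguishing statistic, the first-entrance dilation $s_k$, is a single scalar per translate, and you need all of the countably many coincidences $s_k=s_{k'}$ to fail simultaneously for one integer $w$. But these are not algebraic codimension-$1$ conditions on $w$ of controlled complexity: $s_k$ is a minimum over lattice rays whose identity of minimizer changes discontinuously with $w$, and --- more fundamentally --- $w$ ranges over the countable set $\mathbb Z^d$, where a countable union of proper ``bad'' subsets can perfectly well be everything (the hyperplanes $x_1=n$ already cover $\mathbb Z^d$). One can show that for $w$ a multiple of a rational $v\in P$ the winning ray is eventually the primitive direction of $w$, so $s_k=1/(gk+c_0)$ for all \emph{large} $k$; but nothing in your argument rules out $s_0=s_1$, say, and no genericity over $\mathbb Z^d$ will do so. The paper avoids this entirely by replacing your scalar with one that is \emph{strictly monotone} in $k$: it shows $L_{\ppyr(P+kw)}(s)$ is reconstructible from $L_{P+kw}(s)$ (Lemma~\ref{thm:different-pseudopyramid-volumes}), that $\vol\ppyr(P+kw)=\lim_{s\to\infty}L_{\ppyr(P+kw)}(s)/s^d$, and that this volume strictly increases with $k$ because the back-facet pyramids in the decomposition of Lemma~\ref{thm:pseudopyramid-decomposition} get taller. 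Monotonicity kills all pairs at once with no genericity needed.

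The codimension-$1$ branch has a second gap. You cannot reduce a rational hyperplane $H$ to Theorem~\ref{thm:rational-translation-variant}: that theorem is about \emph{rational} polytopes, while $P\subseteq H$ is an arbitrary real polytope; moreover, when $H$ does not pass through the origin, dilation in $\mathbb R^d$ does not correspond to dilation inside $H$ after identifying $H\cap\mathbb Z^d$ with a lattice, so the induced-lattice reduction does not go through. (There is also a circularity risk, since the paper derives Theorem~\ref{thm:rational-translation-variant} \emph{from} the present theorem.) For irrational $H$ your ``sparse jump set'' plan inherits the same unproved genericity. The paper's treatment of codimension $1$ again uses the pseudopyramid: $\ppyr(P+kw)$ is a genuine full-dimensional pyramid with base $P+kw$ and height $b+k\langle a,w\rangle$, so its volume $\frac1d A\,(b+k\langle a,w\rangle)$ is strictly increasing in $k$ for any integer $w$ with $\langle a,w\rangle>0$, regardless of whether $H$ is rational. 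To repair your proof you would need either to prove monotonicity of your statistic or to replace it with one whose values are forced apart, rather than hoped apart.
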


The dimensionality assumption is indeed necessary:
if $P$ is any polytope contained in the affine space
$\{(\ln 2, \ln 3)\} \times \mathbb R^{d-2}$,
then $s(P + w)$ will never contain integer points,
for all integer $w$ and all real $s > 0$.

The extension of Theorem~\ref{thm:rational-complete-invariant}
is more modest.
Write a polytope $P \subseteq \mathbb R^d$ as
\begin{equation*}
    P = \bigcap_{i = 1}^n \{ x \in \mathbb R^d \mid \langle a_i, x \rangle \leq b_i \}.
\end{equation*}
If all $a_i$ are integral vectors,
with the $b_i$ being arbitrary real numbers,
then $P$ is called \emph{semi-rational}.
Semi-rational polytopes seem natural in this context
because,
if $P$ is a semi-rational polytope,
then for any vector $v$ and any real $s > 0$
the polytopes $P + v$ and $sP$ are semi-rational.
For example,
$P = [0, \sqrt 2] \times [0, \sqrt 3]$ is a semi-rational polytope,
although it is not a translation or a dilation of a rational polytope.

We have the following generalization.

\begin{theorem}
    \label{thm:codimension-zero-and-one}
    Let $P$ and $Q$ be two semi-rational polytopes in $\mathbb R^d$,
    both having codimension $0$ or $1$.
    Suppose moreover that $L_{P + w}(s) = L_{Q + w}(s)$
    for all integer $w$ and all real $s > 0$.
    Then $P = Q$.
\end{theorem}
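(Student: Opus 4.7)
The plan is to combine a Fourier-analytic step, which handles the full-dimensional case, with a preliminary hyperplane-identification step used in codimension one.

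For the codimension-zero case, I would fix a real $s > 0$ and introduce the $\mathbb{Z}^d$-periodic function $F_A(y) = \sum_{v \in \mathbb{Z}^d} \mathds{1}_A(v + y)$; a short unfolding shows $L_{P+w}(s) = F_{sP}(-sw)$, so the hypothesis gives $F_{sP} = F_{sQ}$ on the set $s\mathbb{Z}^d \pmod{\mathbb{Z}^d}$, which is dense in the torus $\mathbb{R}^d/\mathbb{Z}^d$ whenever $s$ is irrational. Both functions are integer-valued step functions whose discontinuity loci are countable unions of translates of $\partial(sP)$ and $\partial(sQ)$ and thus have measure zero; at each joint continuity point they are locally constant, so the dense probing set forces $F_{sP} = F_{sQ}$ almost everywhere and their Fourier coefficients coincide. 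A direct integration yields $\widehat{F_A}(\xi) = \widehat{\mathds{1}_A}(\xi)$ for $\xi \in \mathbb{Z}^d$, and the dilation identity $\widehat{\mathds{1}_{sA}}(\xi) = s^d\,\widehat{\mathds{1}_A}(s\xi)$ rewrites the hypothesis as $\widehat{\mathds{1}_P}(s\xi) = \widehat{\mathds{1}_Q}(s\xi)$ for every irrational $s > 0$ and $\xi \in \mathbb{Z}^d$. The set of such points $s\xi$ is dense in $\mathbb{R}^d$; since the Fourier transforms of bounded indicators are continuous, $\widehat{\mathds{1}_P} \equiv \widehat{\mathds{1}_Q}$, so $\mathds{1}_P = \mathds{1}_Q$ a.e.\ and hence $P = Q$.

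For the codimension-one case, I would first identify the common affine hull. Semi-rationality lets one write $P \subseteq H_P = \{x : \langle a_P, x\rangle = b_P\}$ with $a_P$ primitive integer and $b_P \in \mathbb{R}$, and similarly for $Q$. The set of parameters $(s, w)$ for which $L_{P+w}(s)$ can be nonzero is governed by the arithmetic condition $s(b_P + \langle a_P, w \rangle) \in \mathbb{Z}$, and comparing this locus with the analogous one for $Q$ pins down $a_P = \pm a_Q$ and $b_P = \pm b_Q$, hence $H_P = H_Q$. Once the common hyperplane $H$ is fixed, I would pick an integer affine isomorphism identifying $H \cap \mathbb{Z}^d$ with $\mathbb{Z}^{d-1}$; under this map $P$ and $Q$ become full-dimensional semi-rational polytopes in $\mathbb{R}^{d-1}$ whose integer translates still have matching real Ehrhart functions, and the codimension-zero case above closes the argument.

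The main obstacle is this hyperplane-identification step. When $H_P \neq H_Q$, both $L_{P+w}(s)$ and $L_{Q+w}(s)$ vanish for most $(s, w)$, so the hypothesis is vacuous there and one must squeeze the information out of the sparse parameter set where at least one side is nonzero. Semi-rationality is essential precisely here: without integer facet normals the supporting hyperplane need not have the clean arithmetic support condition exploited above, and the whole reduction collapses.
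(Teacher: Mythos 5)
Your codimension-zero argument is correct, and it is a genuinely different route from the paper's: where the paper extracts each facet datum $b_i$ from the discontinuities of $L_{P+w}(s)$ via Lemmas \ref{thm:jump-magnitudes}--\ref{thm:pseudodiophantine-equations}, you periodize, use equidistribution of $s\mathbb Z^d$ on the torus for irrational $s$, and compare Fourier coefficients. The steps check out: $F_{sP}$ is locally constant off a null set, so agreement on a dense subset of the torus gives agreement a.e., and the points $s\xi$ with $s>0$ irrational and $\xi\in\mathbb Z^d$ are dense in $\mathbb R^d$. Notably, your argument never uses semi-rationality in the full-dimensional case, so it would in fact settle the paper's concluding conjecture for full-dimensional real polytopes.

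The codimension-one case, however, has a genuine gap, and you have located the difficulty in the wrong place. Identifying $H_P=H_Q$ is the easy step (the paper does it by comparing $\vol \ppyr(P+kw)$ with $\vol \ppyr(Q+kw)$, and your support-locus argument is also workable). The real problem is your claim that after identifying $H$ with $\mathbb R^{d-1}$ the integer translates ``still have matching real Ehrhart functions.'' Write $H=\{x \mid x_d=b\}$ after a unimodular change of coordinates. For a fixed integer $w$, the dilate $s(P+w)$ lies in the hyperplane $x_d=s(b+w_d)$, which meets $\mathbb Z^d$ only when $s(b+w_d)\in\mathbb Z$; for all other $s$ both sides of the hypothesis are $0$ and carry no information. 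So the reduction yields $L_{P'+w'}(s)=L_{Q'+w'}(s)$ only for $s$ in the set $\{m/(b+w_d)\}$ --- dense in $(0,\infty)$, but not all of it. When $b$ is rational, every usable $s$ is rational, the orbit $s\mathbb Z^{d-1}\bmod\mathbb Z^{d-1}$ is a finite subgroup rather than a dense set, and your equidistribution step collapses: you only learn aliased sums of Fourier coefficients of $\mathds 1_{sP'}$. (Also, if $b\notin\mathbb Z$ then $H\cap\mathbb Z^d=\emptyset$, so the ``integer affine isomorphism identifying $H\cap\mathbb Z^d$ with $\mathbb Z^{d-1}$'' does not exist as stated.) This restricted-$s$ issue is precisely what Section~\ref{sec:codimension-one} of the paper is built to handle: it strengthens the full-dimensional uniqueness statement so that it needs $L_{P+w}(s)=L_{Q+w}(s)$ only for $s$ in a dense subset of $\mathbb R$ (Corollary~\ref{thm:semirational-dense-information-complete-invariant}), which is what the codimension-one reduction actually delivers. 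To complete your proof you would need an analogous dense-information version of the Fourier argument, and that is not automatic for rational dilation parameters.
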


\subsection{Notation and structure of the paper}

We will usually represent polytopes
by their description as intersection of half-spaces.
That is,
we will write polytopes $P \subset \mathbb R^d$ as
\begin{equation*}
    P = \bigcap_{i = 1}^n \{x \in \mathbb R^d \mid \langle a_i, x \rangle \leq b_i \},
\end{equation*}
where $a_i$ are vectors of $\mathbb R^d$ and $b_i$ are real numbers.

Section~\ref{sec:translation-variant}
contains the proof of Theorems
\ref{thm:translation-variant} and~\ref{thm:rational-translation-variant}.
Most of the time,
we will be working with arbitrary polytopes on this section,
so the vectors $a_i$ will be assumed to be normalized.

Sections \ref{sec:semirational-polytopes} and~\ref{sec:codimension-one}
deal with semi-rational polytopes,
so in this section the $a_i$ will be primitive integer vectors;
that is,
vectors $a_i$ such that $\frac 1 k a_i$ is not an integer
for all integer $k > 1$,
or, equivalently,
the greatest common divisor of all coordinates of $a_i$ is $1$.

In Section~\ref{sec:semirational-polytopes},
we will show Theorem~\ref{thm:codimension-zero-and-one}
just for full-dimensional semi-rational polytopes;
this is Corollary~\ref{thm:semirational-complete-invariant}.
It turns out that showing this theorem for codimension one polytopes
is actually harder than for full-dimensional ones;
in fact,
in Section~\ref{sec:piecing-together-avoiding-discontinuities},
we will arrive at Corollary~\ref{thm:semirational-dense-information-complete-invariant}
which is a strengthened version of Corollary~\ref{thm:semirational-complete-invariant}
that says that,
for full-dimensional semi-rational polytopes $P$ and $Q$,
we have $P = Q$ even if $L_{P + w}(s) = L_{Q + w}(s)$
for $s$ in a dense subset of $\mathbb R$.
Then we will reduce codimension one semi-rational polytopes,
and rational polytopes with any dimension,
to this corollary.

If $D \subseteq \mathbb R$ is an unbounded set,
then the expression
\begin{equation*}
    \lim_{\substack{
        s \to \infty \\
        s \in D
    }}
    f(s)
    = l
\end{equation*}
means that,
for every $\varepsilon > 0$,
there is a number $N$ such that,
for all $s \in D$,
if $s > N$ then $|f(s) - l| < \varepsilon$.
In other words,
this is a ``limit with restricted domain''.
For example,
if $D = \{k \pi \mid k \in \mathbb Z\}$,
then
\begin{equation*}
    \lim_{\substack{
        \theta \to \infty \\
        \theta \in D
    }}
    \sin \theta
    = 0,
\end{equation*}
a limit which we will usually write as
\begin{equation*}
    \lim_{\substack{
        \theta \to \infty \\
        \frac\theta\pi \in \mathbb Z
    }}
    \sin \theta = 0.
\end{equation*}

Note that this limit is undefined if the domain is bounded.

We will also use the Iverson bracket,
which are defined as follows.
Given a proposition $p$,
we define the number $[p]$ to be $1$ if $p$ is true,
and $0$ if $p$ is false.
For example,
the number $\#(P \cap \mathbb Z^d)$ of integer points contained in $P$
may be expressed as
\begin{equation*}
    \#(P \cap \mathbb Z^d) = \sum_{x \in \mathbb Z^d} [x \in P].
\end{equation*}

The relative volume of a semi-rational polytope $P$,
denoted by $\rvol P$,
is defined in Section~\ref{sec:relative-volumes-of-facets}.

\section{Pseudopyramids and non-translation invariance}
\label{sec:translation-variant}

In this section,
it will be shown that the real Ehrhart function $L_P(s)$
is ``very far'' from being translation invariant.
We will first define an operation,
called ``pseudopyramid'',
that constructs a polytope $\ppyr(P)$ from a polytope $P$.
We will show that it is possible to reconstruct $L_{\ppyr P}(s)$ from $L_P(s)$.
Then we will show that,
for appropriate integer $w$,
the polytopes $\ppyr P$ and $\ppyr(P + w)$ will have different volumes;
this means that $L_{\ppyr P}(s)$ and $L_{\ppyr(P + w)}(s)$ are different,
which implies that $L_P(s)$ and $L_{P + w}(s)$ differ.

Let $P \subseteq \mathbb R^d$ be any polytope.
Define the pseudopyramid $\ppyr(P)$ of $P$
to be the convex hull of $P \cup \{0\}$,
or,
equivalently,
\begin{equation*}
    \ppyr(P) = \bigcup_{0 \leq \lambda \leq 1} \lambda P
\end{equation*}
(Figure~\ref{fig:pseudopyramid}).
The pseudopyramid is so called
because it resembles the operation of creating a pyramid over a polytope.
Note however that the pseudopyramid lives in the same ambient space as the polytope,
whereas the pyramid over a polytope is a polytope in one higher dimension
(that is, $\ppyr(P) \subseteq \mathbb R^d$
while $\operatorname{pyr}(P) \subseteq \mathbb R^{d+1}$).

\begin{figure}[t]
    \centering
    \begin{tikzpicture}
        \draw (-1, 0) -- (3, 0);
        \draw (0, -1) -- (0, 3); 
        \filldraw [thick, blue, fill = blue!40] (1, 1) rectangle (2.5, 2.5);
        \node at (1.75, 1.75) {$P$};

        \begin{scope}[xshift = 5cm]
            \draw (-1, 0) -- (3, 0);
            \draw (0, -1) -- (0, 3); 
            \filldraw [thick, blue, fill = blue!40]
                (0, 0) -- (1, 2.5) -- (2.5, 2.5) -- (2.5, 1) -- cycle;
            \node at (1.75, 1.75) {$\ppyr(P)$};
        \end{scope}
    \end{tikzpicture}
    \caption{
        Pseudopyramid of a polytope.
    }
    \label{fig:pseudopyramid}
\end{figure}
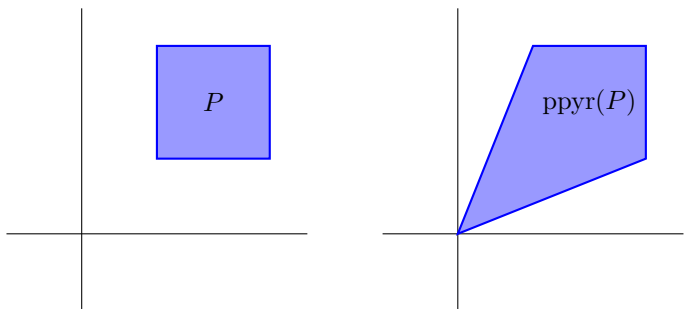

If $x \in \lambda P$ for some $\lambda \in [0, s]$,
then $x \in s \ppyr(P)$.
Therefore,
if $s \ppyr(P)$ and $s \ppyr(Q)$ contain a different number of points,
we can be sure $L_P(s)$ and $L_Q(s)$ differ somewhere in the interval $[0, s]$.
More precisely:

\begin{lemma}
    \label{thm:different-pseudopyramid-volumes}
    Let $P$ and $Q$ be real polytopes such that $L_P(s) = L_Q(s)$.
    Then $L_{\ppyr P}(s) = L_{\ppyr Q}(s)$.
\end{lemma}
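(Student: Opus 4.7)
The plan is to parametrize the contribution of each integer point $x$ to $L_P$ by its \emph{entry time}
\begin{equation*}
\ell_P(x) := \inf\{\lambda \geq 0 : x \in \lambda P\}
\end{equation*}
(set $\ell_P(x) = +\infty$ if no such $\lambda$ exists) and its \emph{exit time} $\mu_P(x) := \sup\{\lambda \geq 0 : x \in \lambda P\}$. Using convexity of $P$, the set $\{\lambda \geq 0 : x \in \lambda P\}$ is a closed interval $[\ell_P(x), \mu_P(x)]$, so $L_P(s) = \sum_{x \in \mathbb Z^d}[\ell_P(x) \leq s \leq \mu_P(x)]$. The goal is to show that the multiset of entry times $\{\ell_P(x) : x \in \mathbb Z^d\}$ is determined by the function $L_P$ alone; once this is established, the identity $L_{\ppyr P}(s) = \#\{x : \ell_P(x) \leq s\}$ gives the lemma.

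First I would verify the formula for $L_{\ppyr P}$. Since $s\,\ppyr(P) = \bigcup_{0 \leq \lambda \leq s}\lambda P$, an integer point $x$ lies in $s\,\ppyr(P)$ exactly when there exists $\lambda \in [0, s]$ with $x \in \lambda P$, which happens iff $\ell_P(x) \leq s$. Second, I would compute the left-jumps of $L_P$: taking the left limit in the expression for $L_P(s)$ yields $L_P(s^-) = \sum_x [\ell_P(x) < s \leq \mu_P(x)]$, so
\begin{equation*}
L_P(s) - L_P(s^-) = \#\{x \in \mathbb Z^d : \ell_P(x) = s\}.
\end{equation*}
Thus the left-jumps of $L_P$ count, at each $s$, exactly the integer points with entry time $s$. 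The hypothesis $L_P = L_Q$ makes these jumps identical for $P$ and $Q$, so $\#\{x : \ell_P(x) = s_0\} = \#\{x : \ell_Q(x) = s_0\}$ for every $s_0$; summing over $s_0 \in [0, s]$ gives $L_{\ppyr P}(s) = L_{\ppyr Q}(s)$.

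The main subtlety is that at a critical value $s_0$ \emph{both} entries (points with $\ell_P(x) = s_0$) and exits (points with $\mu_P(x) = s_0$) may occur simultaneously, and the two-sided jump $L_P(s_0^+) - L_P(s_0^-)$ does not distinguish them --- the entries could even be fully cancelled by exits. The key move is to use only the \emph{left} limit: since the interval $[\ell_P(x), \mu_P(x)]$ is closed at both ends, a point that exits at $s_0$ is still counted in $L_P(s_0^-)$, so the left-jump $L_P(s_0) - L_P(s_0^-)$ isolates entries cleanly. Once that separation is in place, the jump bookkeeping is routine.
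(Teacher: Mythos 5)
Your proof is correct and is essentially the paper's argument in different clothing: the paper packages the same observation as a "lifting" operation that removes right-jumps (exits) from $L_P$ while preserving left-jumps (entries), whereas you extract the left-jumps directly as $\#\{x : \ell_P(x) = s_0\}$ and accumulate them; both hinge on the fact that $\{\lambda \geq 0 : x \in \lambda P\}$ is a closed interval, so that the left limit cleanly separates entries from exits. The only point left implicit (also treated briefly in the paper) is that for $s$ in a bounded range only finitely many indicators are nonzero, which justifies interchanging the left limit with the sum and makes the final sum over entry times finite.
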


\begin{proof}
    First,
    we will define an operation,
    called ``lifting'',
    which we will use to reconstruct $L_{\ppyr P}(s)$ from $L_P(s)$.

    Let $f: \mathbb R \to \mathbb R$
    be any function which has a jump-discontinuity at a point $s_0$,
    and denote by $f(s_0^+)$ the limit of $f(s)$ as $s \to s_0$ with $s > s_0$.
    Define a function $g: \mathbb R \to \mathbb R$ by
    \begin{equation*}
        g(s) = \begin{cases}
            f(s), & \text{if $s \leq s_0$;} \\
            f(s) - f(s_0^+) + f(s_0), & \text{if $s > s_0$.}
        \end{cases}
    \end{equation*}
    The function $g$ is right-continuous at $s_0$ by construction.
    Call $g$ the \emph{result of lifting $f$ at $s_0$}.
    For example,
    if $f$ is the indicator function of $[0, 1]$,
    the result of lifting $f$ at $1$ is the indicator function of $[0, \infty)$.

    If the discontinuity points of $f$ are $s_0 < s_1 < s_2 < \dots$,
    we may successively lift the function at these points;
    that is,
    let $f_0 = f$
    and for $k \geq 0$ let $f_{k+1}$ be the result of lifting $f_k$ at $s_k$.
    If $s < s_n$ for some $n$,
    then for all $k > n$ we have $f_k(s) = f_n(s)$,
    so that the functions $f_k$ converge pointwise at every $s \in \mathbb R$.
    Let $g$ be this pointwise limit;
    we will call $g$ the \emph{lifting} of $f$.
    (Figure~\ref{fig:ehrhart-function-of-ppyr}
    shows the graph of the lifting of the function
    depicted in Figure~\ref{fig:ehrhart-function-of-square}.)

    \begin{figure}[t]
        \pgfplotsset{
            every axis/.style = {
                width = 5cm,
                axis lines = center,
                clip = false,
                xmin = 0, xmax = 1.25,
            },
            axis line style = {gray},
        }
        \begin{subfigure}{.5\linewidth}
            \centering
            \begin{tikzpicture}
                \draw [gray] (-1, 0) -- (3, 0);
                \draw [gray] (0, -1) -- (0, 3);

                \filldraw [blue, fill = blue!40, thick]
                    (1, 1) -- (2, 1) -- (2, 0) -- (1, 0) -- cycle;

                \draw [blue, thick] 
                    (0, 0) -- (1, 1) -- (2, 1) -- (2, 0) -- (1, 0) -- cycle;

                \foreach \x in {-1, ..., 3}
                    \foreach \y in {-1, ..., 3}
                        \fill (\x, \y) circle [radius = 1pt];

                \foreach \point in {(1, 1), (2, 1), (2, 0), (1, 0)}
                    \fill \point circle [radius = 2pt];
            \end{tikzpicture}
            \caption{}
            \label{fig:ppyr-of-square}
        \end{subfigure}
        \begin{subfigure}{.5\linewidth}
            \centering
            \pgfplotsset{
                every axis/.style = {
                    xtick = \empty,
                    ytick = \empty,
                    width = 5cm,
                    height = 2.5cm,
                    axis lines = center,
                    clip = false,
                    xmin = 0, xmax = 1.25,
                    ymin = 0, ymax = 1,
                },
            }

            \begin{tikzpicture}
                \node at (-1, 0.5) {$\mathds 1_{(1, 0)}$};
                \begin{axis}
                    \draw [thick] (0, 0) -- (0.5, 0)
                        (0.5, 1) -- (1, 1)
                        (1, 0) -- (1.25, 0);
                    \draw [dashed] (0.5, 0) -- (0.5, 1)
                        (1, 0) -- (1, 1);
                    \addplot [closed dot] coordinates {(0.5, 1) (1, 1)};
                    \addplot [open dot] coordinates {(0.5, 0) (1, 0)};
                \end{axis}
            \end{tikzpicture}
            \\[1em]

            \begin{tikzpicture}
                \node at (-1, 0.5) {$\mathds 1_{(1, 1)}$};
                \begin{axis}
                    \draw [thick] (0, 0) -- (1.25, 0);
                    \draw [dashed] (1, 0) -- (1, 1);
                    \addplot [closed dot] coordinates {(1, 1)};
                    \addplot [open dot] coordinates {(1, 0)};
                \end{axis}
            \end{tikzpicture}
            \\[1em]

            \begin{tikzpicture}
                \node at (-1, 0.5) {$\mathds 1_{(2, 0)}$};
                \begin{axis}
                    \draw [thick] (0, 0) -- (1, 0)
                        (1, 1) -- (1.25, 1);
                    \draw [dashed] (1, 0) -- (1, 1);
                    \addplot [closed dot] coordinates {(1, 1)};
                    \addplot [open dot] coordinates {(1, 0)};
                \end{axis}
            \end{tikzpicture}
            \\[1em]

            \begin{tikzpicture}
                \node at (-1, 0.5) {$\mathds 1_{(2, 1)}$};
                \begin{axis}[xtick = {0.5, 1}]
                    \draw [thick] (0, 0) -- (1, 0)
                        (1, 1) -- (1.25, 1);
                    \draw [dashed] (1, 0) -- (1, 1);
                    \addplot [closed dot] coordinates {(1, 1)};
                    \addplot [open dot] coordinates {(1, 0)};
                \end{axis}
            \end{tikzpicture}

            \caption{}
            \label{fig:indicator-function-of-points}
        \end{subfigure}

        \begin{subfigure}{.5\linewidth} 
            \centering
            \begin{tikzpicture}
                \begin{axis} [
                    ytick = {0, ..., 5},
                    ymin = 0, ymax = 5, 
                ]
                    \addplot [closed dot] coordinates
                        {(0, 1) (0.5, 1) (1, 4)};
                    \addplot [open dot] coordinates
                        {(0, 0) (0.5, 0) (1, 1) (1, 2)};
                    \draw [thick] (0, 0) -- (0.5, 0)
                        (0.5, 1) -- (1, 1)
                        (1, 2) -- (1.25, 2);
                    \draw [dashed] (0, 0) -- (0, 1)
                        (0.5, 0) -- (0.5, 1)
                        (1, 1) -- (1, 4);
                \end{axis}
            \end{tikzpicture}

            \caption{}
            \label{fig:ehrhart-function-of-square}
        \end{subfigure}
        \begin{subfigure}{.5\linewidth} 
            \centering
            \begin{tikzpicture}
                \begin{axis} [
                    ytick = {0, ..., 5},
                    ymin = 0, ymax = 5, 
                ]
                    \addplot [closed dot] coordinates
                        {(0, 1) (0.5, 2) (1, 5)};
                    \addplot [open dot] coordinates
                        {(0, 0) (0.5, 1) (1, 2)};
                    \draw [thick] (0, 1) -- (0.5, 1)
                        (0.5, 2) -- (1, 2)
                        (1, 5) -- (1.25, 5);
                    \draw [dashed] (0.5, 1) -- (0.5, 2)
                        (1, 2) -- (1, 5);
                \end{axis}
            \end{tikzpicture}

            \caption{}
            \label{fig:ehrhart-function-of-ppyr}
        \end{subfigure}
        \caption[
            Computing $L_{\ppyr P}(s)$ from $L_P(s)$.
        ] {
            Computing $L_{\ppyr P}(s)$ from $L_P(s)$.
            (\subref{fig:ppyr-of-square}): Polytope $P = [1, 2] \times [0, 1]$,
            and an outline of its pseudopyramid.
            (\subref{fig:indicator-function-of-points}): ``Indicator functions''
            $\mathds 1_x(s)$ of the points $(1, 0)$, $(1, 1)$, $(2, 0)$ and $(2, 1)$.
            (\subref{fig:ehrhart-function-of-square}): Function $L_P(s)$.
            (\subref{fig:ehrhart-function-of-ppyr}): Function $L_{\ppyr P}(s)$.
        }
        \label{fig:l-ppyr-from-l-p}
    \end{figure}

    Fix the polytope $P$;
    we will show that $L_{\ppyr P}(s)$ is the lifting of $L_P(s)$.

    Given a point $x$, define $\mathds 1_x(s) = [x \in sP]$
    (the ``indicator function'' of $x$);
    that is, $\mathds 1_x(s) = 1$ if $x \in sP$ and $0$ otherwise.
    Note we have $L_P = \sum_{x \in \mathbb Z^d} \mathds 1_x$.

    Observe that $\mathds 1_x$ is the indicator function of a closed interval.
    If this interval is $[a, b]$,
    denote by $\mathds 1'_x$ the result of lifting $\mathds 1_x$ at $b$.
    If the interval is $\emptyset$ or $[a, \infty)$,
    just let $\mathds 1'_x = \mathds 1_x$.
    Since we have
    \begin{equation*}
        s \ppyr P = \bigcup_{0 \leq \lambda \leq s} \lambda P,
    \end{equation*}
    we know that $x \in s \ppyr P$ whenever $s \geq a$,
    so we have $\mathds 1'_x(s) = [x \in s \ppyr P]$;
    therefore,
    \begin{equation*}
        L_{\ppyr P} = \sum_{x \in \mathbb Z^d} \mathds 1'_x.
    \end{equation*}

    It is a simple exercise showing the lifting of a sum of finitely many functions
    is the sum of their liftings.
    Let $N > 0$ be fixed.
    If we look only for $s < N$,
    only finitely many of the functions $\mathds 1_x$ will be nonzero,
    so we may apply this result.
    If $f$ is the lifting of $L_P(s)$,
    for $s < N$ we have
    \begin{equation*}
        f(s) = \sum_{x \in \mathbb Z^d} \mathds 1'_x = L_{\ppyr P}(s).
    \end{equation*}
    As $N$ was arbitrary, we conclude $L_{\ppyr P}$ is the lifting of $L_P$.

    Finally,
    if $L_P(s) = L_Q(s)$,
    then their liftings $L_{\ppyr P}(s)$ and $L_{\ppyr Q}(s)$ will be equal.
\end{proof}

In order to use this lemma,
we will decompose the pseudopyramid in several interior-disjoint pieces
and show that some of them get ``larger'' when the polytope is translated.
Since we have
\begin{equation*}
    \lim_{s \to \infty} \frac{ L_{\ppyr P}(s) }{s^d} = \vol \ppyr P,
\end{equation*}
once we show that $\ppyr P$ and $\ppyr (P + w)$ have different volumes,
Lemma~\ref{thm:different-pseudopyramid-volumes}
will guarantee that $L_P(s)$ and $L_{P + w}(s)$ are different.

If $P \subseteq \mathbb R^d$ is a full-dimensional polytope with $n$ facets,
write $P$ as
\begin{equation*}
    P = \bigcap_{i = 1}^n \{x \in \mathbb R^d \mid \langle a_i, x \rangle \leq b_i \},
\end{equation*}
so that each of its facets $F_i$ are defined by
\begin{equation*}
    F_i = P \cap \{x \in \mathbb R^d \mid \langle a_i, x \rangle = b_i \}.
\end{equation*}

\begin{figure}[t]
    \tikzset{
        every path/.style = {line join = bevel}
    }
    \centering
    \begin{subfigure}{.4\linewidth}
        \centering
        \begin{tikzpicture}
            \draw (-1, 0) -- (3, 0);
            \draw (0, -1) -- (0, 3);
            \filldraw [blue, thick, fill = blue!40]
                (1, 1) -- (2, 2) -- (2.5, 2) -- (2.5, 0.5) -- (1.5, 0.5) -- cycle;
            \draw [red, very thick]
                (2.5, 0.5) -- (1.5, 0.5) -- (1, 1);
        \end{tikzpicture}
        \caption{} 
        \label{fig:back-facets}
    \end{subfigure}
    \qquad
    \begin{subfigure}{.4\linewidth}
        \centering
        \begin{tikzpicture}
            \draw (-1, 0) -- (3, 0);
            \draw (0, -1) -- (0, 3);
            \filldraw [blue, thick, fill = blue!40]
                (1, 1) -- (2, 2) -- (2.5, 2) -- (2.5, 0.5) -- (1.5, 0.5) -- cycle;
            \filldraw [blue, thick, fill = blue!40]
                (0, 0) -- (1.5, 0.5) -- (1, 1) -- cycle;
            \filldraw [blue, thick, fill = blue!40]
                (0, 0) -- (2.5, 0.5) -- (1.5, 0.5) -- cycle;
        \end{tikzpicture}
        \caption{} 
        \label{fig:pseudopyramid-decomposition}
    \end{subfigure}
    \caption{
        (\subref{fig:back-facets}): Back facets (in red) of a polytope.
        (\subref{fig:pseudopyramid-decomposition}):
        Decomposition of the pseudopyramid $\ppyr P$ of a polytope $P$
        in $P$ and in pseudopyramids of its back facets.
    }
\end{figure}
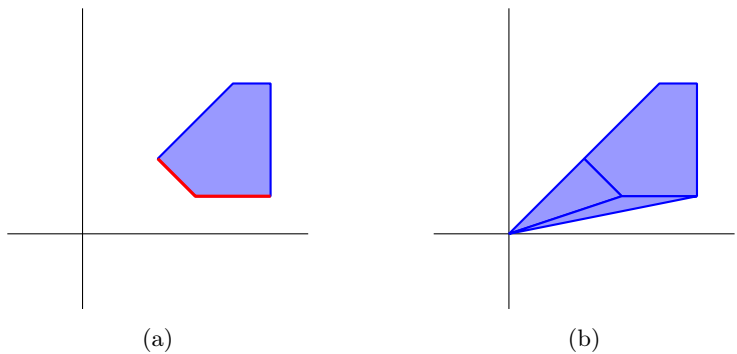

Call $F_i$ a \emph{back facet} of $P$ if $b_i < 0$
(Figure~\ref{fig:back-facets}).
The pseudopyramid $\ppyr F_i$ will intersect $P$,
but as $b_i < 0$ the interiors of these two full-dimensional polytopes are disjoint.
This idea leads to the following decomposition lemma,
whose proof is left to the reader.

\begin{lemma}
    \label{thm:pseudopyramid-decomposition}
    The pseudopyramid of a full-dimensional real polytope $P$
    is the interior-disjoint union of $P$ and the pseudopyramids $\ppyr F$
    of the back facets of $P$.
    \qed
\end{lemma}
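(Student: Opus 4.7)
The plan is to establish the set-theoretic equality $\ppyr P = P \cup \bigcup_F \ppyr F$ (union over back facets $F$), then check pairwise interior-disjointness. The containment $\supseteq$ is trivial: $P \subseteq \ppyr P$, and since $F \subseteq P$ for each facet we have $\ppyr F \subseteq \ppyr P$.

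For the reverse containment I would first dispose of the case $0 \in P$: convexity of $P$ gives $\lambda y \in P$ for every $y \in P$ and every $\lambda \in [0,1]$, so $\ppyr P = P$; moreover every $b_i \geq 0$, so there are no back facets and the equality is trivial. Otherwise $0 \notin P$ and at least one back facet exists; the case $x = 0$ is handled by $0 \in \ppyr F$ for any back facet $F$, so I fix $x \in \ppyr P$ with $x \neq 0$. Writing $x = \lambda y$ with $y \in P$ and $\lambda \in (0,1]$, the set $T = \{t \geq 0 : tx \in P\}$ is a closed bounded interval $[\alpha,\beta]$ containing $1/\lambda \geq 1$. If $\alpha \leq 1$ then $x \in P$; otherwise $\alpha > 1$, and for $t$ slightly less than $\alpha$ the point $tx$ must violate some inequality $\langle a_i, \cdot \rangle \leq b_i$ that is active at $\alpha x$. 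That forces $\langle a_i, x\rangle < 0$, hence $b_i = \alpha \langle a_i, x\rangle < 0$, so $F_i$ is a back facet; and $x = (1/\alpha)(\alpha x)$ with $1/\alpha \in (0,1)$ places $x$ in $\ppyr F_i$.

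For interior-disjointness I first compare $P$ and $\ppyr F_i$: any $x \in \operatorname{int}(\ppyr F_i)$ has the form $x = \lambda y$ with $\lambda \in (0,1)$ and $y \in F_i$, so $\langle a_i, x\rangle = \lambda b_i > b_i$ (using $b_i < 0$), which violates the defining inequality of $P$ and excludes $x$ from $P$. For two distinct back facets $F_i, F_j$, I use a uniqueness argument: given $x = \lambda y \in \operatorname{int}(\ppyr F_i)$ with $y \in \operatorname{relint}(F_i)$, the identity $\langle a_i, tx\rangle = t\lambda b_i > b_i$ for $t < 1/\lambda$ shows $\alpha = 1/\lambda$, so the entry point $\alpha x = y$ of the ray $\{tx : t \geq 0\}$ into $P$ lies in $\operatorname{relint}(F_i)$; the same construction applied to $F_j$ would force $\alpha x \in \operatorname{relint}(F_j)$, contradicting the disjointness of the relative interiors of distinct facets.

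The main obstacle I anticipate is the last step: one must verify that $\operatorname{int}(\ppyr F_i)$ really equals $\{\lambda y : \lambda \in (0,1),\ y \in \operatorname{relint}(F_i)\}$, which requires $\ppyr F_i$ to be $d$-dimensional, itself a consequence of $b_i \neq 0$ so that $0 \notin \aff F_i$. Once that identification is in hand, the rest of the argument is bookkeeping on the defining half-space inequalities.
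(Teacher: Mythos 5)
The paper leaves this lemma's proof to the reader, offering only the one-line observation that $\langle a_i, \lambda y\rangle = \lambda b_i > b_i$ for $\lambda \in (0,1)$ and $b_i < 0$; your argument is a correct and complete elaboration of exactly that idea. Both halves check out: the ray/entry-point analysis for the covering (correctly excluding the $b_i = 0$ facets, since the violated active inequality forces $\langle a_i, x\rangle < 0$ and hence $b_i < 0$), and the disjointness arguments, including the one genuine subtlety you flag — that $\operatorname{int}(\ppyr F_i) = \{\lambda y : \lambda \in (0,1),\ y \in \operatorname{relint} F_i\}$, which holds because $b_i \neq 0$ makes the representation $x = \lambda y$ with $y \in \aff F_i$ unique. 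No gaps.
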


For the next lemma,
we will also need the fact that the volume of a pyramid
is proportional to its height and to the area of its base;
more specifically,
a pyramid in $\mathbb R^d$ with height $h$
and whose base has $(d-1)$-dimensional area $A$
has volume $\frac{Ah}{d}$.

\begin{lemma}
    \label{thm:full-dimensional-translation-variant}
    Let $P$ be a full-dimensional real polytope which does not contain the origin
    and $v$ any point of $P$.
    Then for any real $\lambda > \mu \geq 0$,
    the functions $L_{P + \lambda v}(s)$ and $L_{P + \mu v}(s)$
    will differ at infinitely many points.
\end{lemma}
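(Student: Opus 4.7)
The plan is to apply Lemma~\ref{thm:different-pseudopyramid-volumes} asymptotically: first establish that $\vol\ppyr(P + \lambda v) > \vol\ppyr(P + \mu v)$, then use the limit $L_{\ppyr R}(s)/s^d \to \vol\ppyr R$ to convert this into an unbounded difference of the pseudopyramid Ehrhart functions, and finally trace this back through the lifting construction to disagreements of the original functions at infinitely many points.

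For the volume comparison, write $P = \bigcap_{i=1}^n \{x : \langle a_i, x\rangle \leq b_i\}$ with $\|a_i\| = 1$. Then the $i$-th facet of $P + \tau v$ lies on the hyperplane $\langle a_i, x\rangle = b_i + \tau\langle a_i, v\rangle$, and is a back facet of $P + \tau v$ exactly when this quantity is negative; in that case $\ppyr(F_i + \tau v)$ is a pyramid of base area $\area(F_i)$ and height $-b_i - \tau\langle a_i, v\rangle$, so Lemma~\ref{thm:pseudopyramid-decomposition} gives
\begin{equation*}
    \vol\ppyr(P + \tau v)
    = \vol P + \frac{1}{d}\sum_{i=1}^n \area(F_i)\cdot\max\bigl(0,\, -b_i - \tau\langle a_i, v\rangle\bigr).
\end{equation*}
Since $0 \notin P$, some $b_i$ is strictly negative; and for any such $i$ the hypothesis $v \in P$ forces $\langle a_i, v\rangle \leq b_i < 0$, so the corresponding summand is strictly positive and strictly increasing in $\tau$. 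A short case analysis (again using $\langle a_i, v\rangle \leq b_i$) shows that the remaining summands are nondecreasing in $\tau$, so the total is strictly increasing, giving $\vol\ppyr(P + \lambda v) > \vol\ppyr(P + \mu v)$.

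It remains to convert this into infinitely many points of disagreement between $L_{P+\lambda v}$ and $L_{P+\mu v}$ themselves. I would revisit the lifting construction from the proof of Lemma~\ref{thm:different-pseudopyramid-volumes} in its cumulative pointwise form: accumulating the per-discontinuity adjustments $\Delta_i^R = L_R(s_i) - L_R(s_i^+)$ yields, for any two real polytopes $R_1, R_2$,
\begin{equation*}
    L_{\ppyr R_1}(s) - L_{\ppyr R_2}(s)
    = \bigl(L_{R_1}(s) - L_{R_2}(s)\bigr)
    + \sum_{s_i < s} \bigl(\Delta_i^{R_1} - \Delta_i^{R_2}\bigr).
\end{equation*}
If $L_{P+\lambda v}$ and $L_{P+\mu v}$ disagreed at only finitely many $s$, they would agree on some tail $(s^*, \infty)$; then the first term and every summand with $s_i > s^*$ would vanish, making the right-hand side constant for $s > s^*$. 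This contradicts the $s^d$-unbounded growth $L_{\ppyr(P+\lambda v)}(s) - L_{\ppyr(P+\mu v)}(s) \sim (\vol\ppyr(P+\lambda v) - \vol\ppyr(P+\mu v))\,s^d$, so the disagreement set is unbounded and, in particular, infinite. The main obstacle is precisely this last step, since a direct invocation of Lemma~\ref{thm:different-pseudopyramid-volumes} only guarantees disagreement at a single $s$; the cumulative pointwise lifting identity above is what forces the disagreement to persist at arbitrarily large $s$.
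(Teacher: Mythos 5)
Your proof is correct and follows essentially the same route as the paper's: decompose $\ppyr(P+\tau v)$ via Lemma~\ref{thm:pseudopyramid-decomposition} into $P+\tau v$ plus pyramids over the back facets, show the resulting volume is strictly increasing in $\tau$ (the paper phrases the monotonicity of the summands as ``back facets stay back facets and their pyramids get taller,'' which is equivalent to your $\max(0,\,-b_i-\tau\langle a_i,v\rangle)$ bookkeeping), and then pass through Lemma~\ref{thm:different-pseudopyramid-volumes} and the limit $L_{\ppyr R}(s)/s^d\to\vol\ppyr R$. If anything, your cumulative form of the lifting identity makes the final step --- that the set of disagreement is infinite rather than merely nonempty --- more explicit than the paper's proof, which ends by concluding only that the two functions ``must be different.''
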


\begin{proof}
    Write $P$ as
    \begin{equation*}
        P = \bigcap_{i = 1}^n \{x \in \mathbb R^d \mid \langle a_i, x \rangle \leq b_i \},
    \end{equation*}
    where $n$ is the number of facets of $P$,
    so that each facet $F_i$ can be written as
    \begin{equation*}
        F_i = P \cap \{x \in \mathbb R^d \mid \langle a_i, x \rangle = b_i \}.
    \end{equation*}

    The facets of $P + \lambda v$ are of the form $F_i + \lambda v$.
    We will show that,
    for $\lambda > \mu \geq 0$,
    if $F_i + \mu v$ is a back facet of $P + \mu v$,
    then $F_i + \lambda v$ is also a back facet of $P + \lambda v$,
    and that the volume of $\ppyr(F_i + \mu v)$
    is strictly smaller than the volume of $\ppyr(F_i + \lambda v)$
    (Figure~\ref{fig:translation-of-pseudopyramid-decomposition}).
    The fact that $P$ does not contain the origin
    will guarantee the existence of at least one back facet.
    Since the volume of $P + \mu v$ and $P + \lambda v$ are the same,
    Lemma~\ref{thm:pseudopyramid-decomposition} will guarantee that
    the volume of $\ppyr(P + \mu v)$ is strictly smaller than
    the volume of $\ppyr(P + \lambda v)$,
    and thus by Lemma~\ref{thm:different-pseudopyramid-volumes}
    the functions $L_{P + \mu v}(s)$ and $L_{P + \lambda v}(s)$ are different.

    \begin{figure}[t]
        \tikzset{
            every path/.style = {line join = bevel}
        }
        \centering
        \begin{tikzpicture}
            \draw (-1, 0) -- (3, 0);
            \draw (0, -1) -- (0, 3);
            \filldraw [blue, thick, fill = blue!40]
                (.5, 1) -- (1, 2) -- (2, 2) -- (2, 0.5) -- (0.5, 0.5) -- cycle;
            \filldraw [blue, thick, fill = blue!40]
                (0, 0) -- (0.5, 0.5) -- (2, 0.5) -- cycle;
            \filldraw [blue, thick, fill = yellow!40]
                (0, 0) -- (.5, 1) -- (0.5, 0.5) -- cycle;
            \draw [red, thick] (0.5, 0.5) -- (0.5, 1) node [right] {$F_i + \mu v$};
        \end{tikzpicture}
        \qquad
        \begin{tikzpicture}
            \draw (-1, 0) -- (3, 0);
            \draw (0, -1) -- (0, 3);
            \filldraw [blue, thick, fill = blue!40]
                (1.5, 1.5) -- (2, 2.5) -- (3, 2.5) -- (3, 1) -- (1.5, 1) -- cycle;
            \filldraw [blue, thick, fill = blue!40]
                (0, 0) -- (1.5, 1) -- (3, 1) -- cycle;
            \filldraw [blue, thick, fill = blue!40]
                (0, 0) -- (1.5, 1.5) -- (2, 2.5) -- cycle;
            \filldraw [blue, thick, fill = yellow!40]
                (0, 0) -- (1.5, 1.5) -- (1.5, 1) -- cycle;
            \draw [red, thick] (1.5, 1) -- (1.5, 1.5) node [right] {$F_i + \lambda v$};
        \end{tikzpicture}
        \caption{
            For $\lambda > \mu$ and $v \in P$,
            if $F_i + \mu v$ is a back facet of $P + \mu v$,
            then $F_i + \lambda v$ is also a back facet of $P + \lambda v$.
        }
        \label{fig:translation-of-pseudopyramid-decomposition}
    \end{figure}
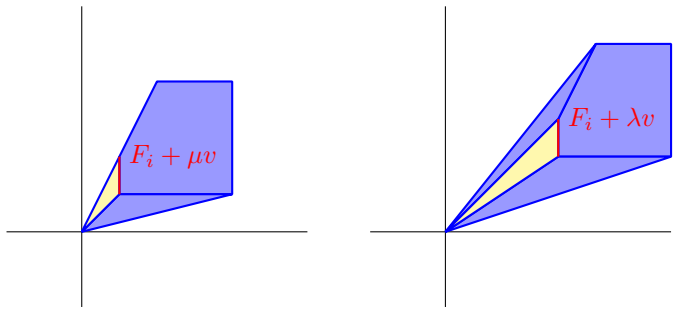

    For any $\mu$, we have
    \begin{equation*}
        P + \mu v =
            \bigcap_{i = 1}^n \{x \in \mathbb R^d \mid
                \langle a_i, x \rangle \leq b_i + \mu \langle a_i, v \rangle
            \},
    \end{equation*}
    so that
    \begin{equation*}
        F_i + \mu v = (P + \mu v) \cap
                \{x \in \mathbb R^d \mid
                    \langle a_i, x \rangle = b_i + \mu \langle a_i, v \rangle \}.
    \end{equation*}

    For all $i$,
    we know that
    \begin{equation*}
        \langle a_i, v \rangle \leq b_i,
    \end{equation*}
    because $v$ is contained in $P$,
    by assumption.
    If $F_i + \mu v$ is a back facet,
    we know that
    \begin{equation*}
        b_i + \mu \langle a_i, v \rangle < 0.
    \end{equation*}
    Adding $\mu \langle a_i, v \rangle$ to both sides of the first inequality
    and using the latter gives $\langle a_i, v \rangle < 0$.
    Therefore,
    for $\lambda > \mu \geq 0$,
    \begin{equation*}
        b_i + \lambda \langle a_i, v \rangle < b_i + \mu \langle a_i, v \rangle,
    \end{equation*}
    which shows that if $F_i + \lambda v$ is a back facet,
    then so is $F_i + \mu v$.

    As $P$ does not contain the origin,
    we know at least one of the $b_i$ is negative,
    and thus $P$ has at least one back facet $F_i$;
    therefore,
    applying the previous reasoning with $\mu = 0$ shows that
    all the polytopes $P + \lambda v$, for $\lambda \geq 0$,
    have $F_i + \lambda v$ as a back facet;
    that is,
    all these polytopes have back facets.

    Since $F_i$ is $(d-1)$-dimensional,
    the pseudopyramid $\ppyr(F_i + \mu v)$ is actually a pyramid.
    The height of this pyramid is the distance from the origin to the hyperplane
    \begin{equation*}
        \{ x \in \mathbb R^d \mid
            \langle a_i, x \rangle = b_i + \mu \langle a_i, v \rangle \}.
    \end{equation*}
    Without loss of generality we may assume $a_i$ is unitary,
    so that this distance is
    \begin{equation*}
        -\big( b_i + \mu \langle a_i, v \rangle \big).
    \end{equation*}
    As the bases of $\ppyr(F_i + \mu v)$ and $\ppyr(F_i + \mu v)$ have the same area,
    whenever $\lambda > \mu$
    the volume of $\ppyr(F_i + \mu v)$ will be strictly smaller than
    the volume of $\ppyr(F_i + \lambda v)$.

    As $P + \mu v$ has a back facet,
    by Lemma~\ref{thm:pseudopyramid-decomposition},
    the volume of $\ppyr(P + \lambda v)$
    is strictly larger than the volume of $\ppyr(P + \mu v)$.
    Observe that there might exist some facet $F_j + \lambda v$ of $P + \lambda v$
    such that $F_j + \mu v$ is not a back facet of $P + \mu v$;
    this is not a problem,
    because the back facets that do appear in $P + \mu v$
    suffice to make the volume of $\ppyr(P + \lambda v)$
    larger than $\ppyr(P + \mu v)$.

    Finally,
    since
    \begin{equation*}
        \lim_{s \to \infty} \frac{L_{\ppyr(P + \lambda v)}(s)} {s^d}
            = \vol \ppyr(P + \lambda v),
    \end{equation*}
    using Lemma~\ref{thm:different-pseudopyramid-volumes},
    we conclude that
    the functions $L_{P + \lambda v}(s)$ and $L_{P + \mu v}(s)$ must be different.
\end{proof}

\begin{restatetheorem}{thm:translation-variant}
    Let $P \subseteq \mathbb R^d$ be a real polytope
    which is either full-dimensional
    or has codimension $1$.
    Then there is an integral vector $w \in \mathbb R^d$ such that
    the functions $L_{P + k w}(s)$ are different
    for all integers $k \geq 0$.
\end{restatetheorem}

\begin{proof}
    If $P$ is full-dimensional and does not contain the origin,
    then it contains a nonzero rational vector $v$.
    Let $w$ be any nonzero multiple of $v$ which is an integer vector;
    then Proposition~\ref{thm:full-dimensional-translation-variant}
    shows directly that all the functions $L_{P + k w}(s)$,
    for $k \geq 0$,
    are different.

    If $P$ is full-dimensional, but contains the origin,
    again it will contain a nonzero rational vector $v$.
    Now $w$ not only needs to be a nonzero integer multiple of $v$,
    but also $w$ must be large enough so that $P + w$ does not contain the origin
    (such a $w$ always exist because $P$ is bounded).
    Now Proposition~\ref{thm:full-dimensional-translation-variant}
    only shows that all the functions $L_{P + k w}(s)$ will be different for $k \geq 1$.
    But since $L_{P + kw}(s)$ is nondecreasing only for $k = 0$,
    because that is the only value of $k$ for which $P + kw$ contains the origin,
    we must have $L_{P + wk}(s)$ distinct from $L_P(s)$
    whenever $k \neq 0$;
    this completes the proof in this case.

    And for the last case
    (if $P$ has codimension $1$),
    we will use Lemma~\ref{thm:different-pseudopyramid-volumes} directly.
    As $P$ is not full-dimensional,
    $P$ is contained in a hyperplane $H$ given by
    \begin{equation*}
        H = \{ x \in \mathbb R^d \mid \langle a, x \rangle = b \},
    \end{equation*}
    where $a$ is a unit vector and $b \geq 0$.

    Let $w$ be any integer vector such that $\langle a, w \rangle > 0$.
    We have
    \begin{equation*}
        P + k w \subseteq H + k w = \{
            x \in \mathbb R^d \mid
            \langle a, x \rangle = b + k \langle a, w \rangle
        \}.
    \end{equation*}

    As $P$ is not full-dimensional,
    the pseudopyramid $\ppyr(P + kw)$
    is actually a pyramid,
    whose base is $P + kw$.
    Let $A$ be the $(d-1)$-dimensional area of $P$.
    As $a$ is a unit vector,
    the height of this pyramid
    (which is the distance of $H + kw$ to the origin)
    is $b + \langle a, w \rangle$.
    Therefore,
    the volume of $\ppyr(P + kw)$ is
    \begin{equation*}
        \vol \ppyr(P + kw) = \frac1d A (b + \langle a, w \rangle).
    \end{equation*}
    Since $P$ has codimension $1$,
    its area $A$ is nonzero,
    so for $k \geq 0$ all these volumes are different.
    Thus,
    all functions $L_{P + kw}(s)$ are different in this case, too.
\end{proof}

If we assume the polytope is rational,
we may drop the dimensionality assumption.

\begin{restatetheorem}{thm:rational-translation-variant}
    Let $P \subseteq \mathbb R^d$ be a rational polytope with any dimension.
    Then there is an integral vector $w \subseteq \mathbb R^d$ such that
    the functions $L_{P + k w}(s)$ are distinct
    for all integers $k \geq 0$.
\end{restatetheorem}

\begin{proof}
    If $P$ has codimension $0$ or $1$,
    use Theorem~\ref{thm:translation-variant}.
    Otherwise,
    $P$ will be contained in a rational hyperplane passing through the origin,
    say, $H$.
    Then apply an affine transformation to $P$
    which maps $H$ to $\mathbb R^{d-1} \times \{0\}$
    and use this theorem for dimension $d-1$.
\end{proof}

We end this section by showing that,
if we do not have the rationality hypothesis,
then the dimension hypothesis is necessary.
Let $M \subseteq \mathbb R^d$ be the $(d-2)$-dimensional affine space
defined by
\begin{equation*}
    M = \{(\ln 2, \ln 3)\} \times \mathbb R^{d-2}.
\end{equation*}
That is, $M$ is the set of all points $(x_1, \dots, x_d)$ in $\mathbb R^d$
such that $x_1 = \ln 2$ and $x_2 = \ln 3$.

For any integer translation vector $w = (w_1, \dots, w_d)$
and any real $s > 0$,
we have
\begin{equation*}
    s(M + w) = \big\{\big( s(\ln 2 + w_1), s(\ln 3 + w_2) \big)\big\}
        \times \mathbb R^{d-2},
\end{equation*}
so if $s(M + w)$ contains an integer point $(x_1, \dots, x_d)$,
then $s(\ln 2 + w_1) = x_1$ and $s(\ln 3 + w_2) = x_2$.
Since $s$, $\ln 2 + w_1$ and $\ln 3 + w_2$ are nonzero,
we have $x_1, x_2 \neq 0$, too,
and thus their ratio is
\begin{equation*}
    \frac{x_1}{x_2} = \frac{\ln 2 + w_1}{\ln 3 + w_2},
\end{equation*}
which,
rearranging the terms,
gives
\begin{equation*}
    x_1 (\ln 3 + w_2) = x_2 (\ln 2 + w_1).
\end{equation*}

Raising $e$ to both sides of the equation then gives
\begin{align*}
    (e^{\ln 3 + w_2})^{x_1} &= (e^{\ln 2 + w_1})^{x_2} \\
        3^{x_1} e^{w_2 x_1} &= 2^{x_2} e^{w_1 x_2} \\
    \frac{3^{x_1}}{2^{x_2}} &= e^{w_1 x_2 - w_2 x_1}
\end{align*}

As $e$ is a transcendental number,
we must have $w_1 x_2 - w_2 x_1 = 0$,
which shows $\frac{3^{x_1}}{2^{x_2}} = 1$.
This is only possible if $x_1 = x_2 = 0$,
a contradiction.
Thus, $s(M + w)$ has no integer points.

Therefore, if $P$ is any polytope contained in $M$,
for any integer translation vector $w$ and any real $s > 0$
the polytope $s(P + w)$ will contain no integer points,
and thus $L_{P + w}(s) = 0$ for all $s > 0$ and all integer $w$.
So,
clearly these functions are all the same.

\section{Reconstruction of semi-rational polytopes}
\label{sec:semirational-polytopes}

A polytope $P$ is \emph{semi-rational}
if it can be written as
\begin{equation*}
    P = \bigcap_{i = 1}^n \{ x \in \mathbb R^d \mid \langle a_i, x \rangle \leq b_i \},
\end{equation*}
where each $a_i$ is an integer vector,
and the $b_i$ are arbitrary real numbers.
(If we demand the $b_i$ to be integers,
too,
then we recover the definition of a rational polytope.)
Every real dilation and real translation of a rational polytope
is a semi-rational polytope,
but,
for example,
$P = [0, \sqrt 2] \times [0, \sqrt 3]$
is a semi-rational polytope
which is not a translation or dilation of a rational polytope.

Suppose we know the directions $a_i$ of each half-space,
and also that we know $L_{P + w}(s)$ for all integer $w$ and all real $s > 0$.
This section will show how to extract each of the $b_i$ from this information,
effectively reconstructing the polytope.

Since we will need some technical lemmas,
we will start discussing how to reconstruct just one specific $b_i$,
but we will ``collect'' and prove each lemma where it is needed.
The complete argument is the proof of Theorem~\ref{thm:recovering-right-hand-sides}.

\subsection{Discontinuities of the Ehrhart function}
\label{sec:discontinuities-of-ehrhart-function}

We will extract information about the polytope $P$
by analyzing the discontinuities of the various functions $L_{P + w}(s)$.
This section discusses the meaning of these discontinuities.

Consider the polytope $P = [\frac23, 1] \times [0, \frac 13]$
(Figure~\ref{fig:ehrhart-function-small-square}),
whose Ehrhart function is
\begin{equation*}
    L_P(s) = \left(\floor s - \ceil{\frac 23 s} + 1\right)
            \left(\floor{\frac 13 s} + 1\right).
\end{equation*}
We will analyze what happens
for the dilation parameters $s = 1$, $s = \frac 32$ and $s = 3$.

\begin{figure}[t]
    \centering
    \newcommand{\drawscaledsquare}[2]{
        \filldraw [blue, line width = 1pt, fill = blue!40]
            (#1 * 2/3, 0) -- (#1, 0) -- (#1, #1 * 1/3) -- (#1 * 2/3, #1 * 1/3) -- cycle;

        \node at (#1 * 5/6, #1 * 1/6) {#2};
    }
    \begin{tikzpicture}[scale = 1.5]
        \draw (-1, 0) -- (4, 0);
        \draw (0, -0.5) -- (0, 2);

        \foreach \x in {-1, ..., 4}
            \foreach \y in {0, 1, 2}
                \fill[gray] (\x, \y) circle [radius = 1pt];

        \drawscaledsquare{1}{$P$};
        \drawscaledsquare{1.5}{$\frac32 P$};
        \drawscaledsquare{3}{$3P$};
    \end{tikzpicture}

    \centering
    \begin{tikzpicture}
        \begin{axis}[
            axis lines = center,
            xmin = 0,
            xmax = 3.5,
            ymin = 0,
            ymax = 4.5,
        ]
            \draw [thick]
                (0, 0) -- (1, 0)
                (1, 1) -- (1.5, 1)
                (1.5, 0) -- (2, 0)
                (2, 1) -- (3, 1)
                (3, 2) -- (3.5, 2)
                ;

            \draw [dashed]
                (1, 0) -- (1, 1)
                (1.5, 1) -- (1.5, 0)
                (2, 0) -- (2, 1)
                (3, 1) -- (3, 4)
                ;

            \addplot [closed dot] coordinates {
                (0, 1) (1, 1) (1.5, 1) (2, 1) (3, 4)
            };
            \addplot [open dot] coordinates {
                (0, 0) (1, 0) (1.5, 0) (2, 0) (3, 1) (3, 2)
            };
        \end{axis}
    \end{tikzpicture}
    \caption{
        Polytope $P = [\frac23, 1] \times [0, \frac 13]$
        and its Ehrhart function.
    }
    \label{fig:ehrhart-function-small-square}
\end{figure}

At $s = 1$,
the polytope $P$ is ``gaining'' a new integer point,
namely,
$(1, 0)$.
This gain is marked in the Ehrhart function of $P$ by a discontinuity:
$L_P(s)$ is left-discontinuous at $s = 1$.
It is a jump-discontinuity,
and the magnitude of the jump is
\begin{equation*}
    L_P(1) - \lim_{s \to 1^-} L_P(s) = 1 - 0 = 1,
\end{equation*}
which is the number of points which $P$ gains when reaching $s = 1$.

At $s = \frac32$,
the polytope ``loses'' the integer point $(1, 0)$.
Again this is marked in $L_P(s)$ by a discontinuity;
we have a right-discontinuity at $s = \frac32$,
which is again a jump-discontinuity,
and the magnitude of the jump is
\begin{equation*}
    L_P(\tfrac 32) - \lim_{s \to {\frac32}^+} L_P(s) = 1 - 0 = 1,
\end{equation*}
again the number of points lost by $P$ at $s = \frac 32$.

At $s = 3$,
these two situations happen simultaneously.
The polytope $P$ gains the points $(2, 1)$, $(3, 1)$ and $(3, 0)$,
and then immediately loses the points $(2, 1)$ and $(2, 0)$.
The gain is marked by a left-discontinuity,
with a jump of magnitude $3$,
and the loss is marked by a right-discontinuity,
with a jump of magnitude $2$.

Observe that these discontinuities are very regular:
when gaining points,
there is a left-discontinuity
and the function $L_P(s)$ ``jumps upwards'',
and when losing points,
there is a right-discontinuity
and the function $L_P(s)$ ``jumps downwards''.
The magnitude of the jump is exactly the number of points gained or lost
at that dilation parameter.
We will now formalize
how this behavior gives information about the facets of $P$.

Write $P$ as
\begin{equation*}
    P = \bigcap_{i = 1}^n \{ x \in \mathbb R^d \mid \langle a_i, x \rangle \leq b_i \},
\end{equation*}
where each $a_i$ is a primitive integer vector,
and let $F_i$ be the $i$th facet of $P$;
that is,
\begin{equation*}
    F_i = P \cap \{x \mid \langle a_i, x \rangle = b_i \}.
\end{equation*}

The facets $F_i$ for which $b_i < 0$
were called \emph{back facets} in Section~\ref{sec:translation-variant}.
By analogy,
we will call the facets $F_i$ for which $b_i > 0$ by \emph{front facets}.
The relation between the magnitude of the discontinuities
and the number of points in back and front facets
is summarized by the following lemma.

\begin{lemma}
    \label{thm:jump-magnitudes}
    Let $P$ be a full-dimensional polytope
    and $s_0$ a discontinuity point of $L_P(s)$.
    If $s_0$ is a left-discontinuity,
    then the magnitude
    \begin{equation*}
        L_P(s_0) - \lim_{s \to s_0^-} L_P(s)
    \end{equation*}
    of the jump
    is the number of integral points contained in the front facets of $s_0 P$.
    If $s_0$ is a right-discontinuity,
    then the magnitude
    \begin{equation*}
        L_P(s_0) - \lim_{s \to s_0^+} L_P(s)
    \end{equation*}
    of the jump
    is the number of integral points contained in the back facets of $s_0 P$.
\end{lemma}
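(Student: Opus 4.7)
The plan is to decompose $L_P(s) = \sum_{x \in \mathbb{Z}^d} \mathds{1}_x(s)$ (where $\mathds{1}_x(s) = [x \in sP]$ as in the proof of Lemma~\ref{thm:different-pseudopyramid-volumes}), analyze each indicator function separately, and then recognize the left- and right-jumps at $s_0$ as counting integer points lying on certain facets of $s_0 P$.

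First I would describe, for each integer point $x$, the set $S_x = \{s > 0 : x \in sP\}$. Using the facet description of $P$, the condition $x \in sP$ becomes $\langle a_i, x\rangle \leq s b_i$ for every $i$. I split by the sign of $b_i$: front facets ($b_i > 0$) give a lower bound $s \geq \langle a_i, x\rangle / b_i$; back facets ($b_i < 0$) give an upper bound $s \leq \langle a_i, x\rangle / b_i$; and facets with $b_i = 0$ contribute conditions on $x$ that do not depend on $s$. Hence $S_x$ is either empty or a closed interval $[\alpha_x, \beta_x]$, with $\alpha_x$ being the maximum of the front-facet lower bounds (and $\alpha_x = 0$ if there are none) and $\beta_x$ the minimum of the back-facet upper bounds (and $\beta_x = +\infty$ if there are none).

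Next I would match the endpoints of $S_x$ to the facets of $s_0 P$. I claim $\alpha_x = s_0$ if and only if $x$ lies on some front facet of $s_0 P$: indeed, $\alpha_x = s_0$ means $x \in s_0 P$ and some front constraint is tight, $\langle a_i, x\rangle = s_0 b_i$ with $b_i > 0$, which is precisely the statement that $x \in s_0 F_i$; and the converse is immediate from the same equality together with $x \in s_0 P$. Symmetrically, $\beta_x = s_0$ holds if and only if $x$ lies on some back facet of $s_0 P$.

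Finally, since $S_x$ is closed, one checks directly that $\mathds{1}_x(s_0) - \lim_{s \to s_0^-} \mathds{1}_x(s) = [\alpha_x = s_0]$ and $\mathds{1}_x(s_0) - \lim_{s \to s_0^+} \mathds{1}_x(s) = [\beta_x = s_0]$; in particular these differences are always $0$ or $1$. Because $s_0 P$ is bounded, only finitely many $x$ have $S_x \ni s_0$, so summation and limit may be interchanged. The left jump therefore counts the $x$ with $\alpha_x = s_0$, which by the previous step is the number of integer points lying on some front facet of $s_0 P$; the right-jump case is identical with back facets. I do not expect a real obstacle: the only subtlety is bookkeeping, namely that an integer point on the intersection of several front facets still contributes only $1$ to the jump (consistent with $[\alpha_x = s_0] \in \{0,1\}$) and hence should be counted once in the union of front facets, not once per facet.
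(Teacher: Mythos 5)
Your proposal is correct and follows essentially the same route as the paper: both arguments reduce to observing that for an integer point $x$ the constraint $\langle a_i, x\rangle \leq s b_i$ is a lower bound on $s$ when $b_i>0$ and an upper bound when $b_i<0$, so that $x$ enters $sP$ exactly at a tight front-facet constraint and leaves at a tight back-facet constraint; your packaging via the closed interval $S_x=[\alpha_x,\beta_x]$ is just a cleaner organization of the paper's inequality manipulations. The only point to watch is that the identity ``jump of $\mathds 1_x$ equals $[\alpha_x=s_0]$'' presupposes $S_x\neq\emptyset$, which your setup already handles.
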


\begin{proof}
    In the hyperplane representation of $P$,
    if a point $x_0$ is not contained in $s_0 P$,
    then it must violate at least one inequality;
    that is,
    \begin{equation*}
        \langle a_i, x_0 \rangle > s_0 b_i
    \end{equation*}
    for some $i$.
    Since this is a strict inequality,
    for any $s$ sufficiently close to $s_0$ we also have
    \begin{equation*}
        \langle a_i, x_0 \rangle > s b_i,
    \end{equation*}
    and thus if $x \notin s_0 P$ then $x \notin s P$
    for all $s$ sufficiently close to $s_0$.

    This means that the difference between $L_P(s_0)$
    and any of the limits
    \begin{equation*}
        L_P(s_0^+) = \lim_{s \to s_0^+} L_P(s)
        \quad \text{and} \quad
        L_P(s_0^-) = \lim_{s \to s_0^-} L_P(s)
    \end{equation*}
    must be due to points $x_0 \in s_0 P$.

    Let $x_0$ be a point in $s_0 P$.
    When considering the inequalities of $s P$ for $s < s_0$,
    if $b_i \leq 0$ we have
    \begin{equation*}
        \langle a_i, x_0 \rangle \leq s_0 b_i \leq s b_i,
    \end{equation*}
    and thus all these inequalities are satisfied.
    Thus the only inequalities that might be violated
    are the ones when $b_i > 0$,
    which correspond to front facets.

    Suppose then that $x_0$ is contained in the front facet of $s_0 P$
    which is determined by the inequality $\langle a_i, x \rangle \leq s_0 b_i$.
    The point $x_0$ satisfies this inequality with equality;
    that is,
    \begin{equation*}
        \langle a_i, x_0 \rangle = s_0 b_i.
    \end{equation*}

    If $s < s_0$, as $b_i > 0$, we have
    \begin{equation*}
        \langle a_i, x_0 \rangle = s_0 b_i > s b_i,
    \end{equation*}
    which shows that $x_0 \notin sP$ for all $s < s_0$.

    Conversely,
    if $x_0$ is not contained in any front facet of $s_0 P$,
    it will satisfy
    \begin{equation*}
        \langle a_i, x_0 \rangle < s_0 b_i
    \end{equation*}
    for all $i$ with $b_i > 0$,
    and thus for all $s < s_0$ sufficiently close to $s_0$
    the point $x_0$ will still satisfy the corresponding inequality for $s P$.
    Thus, $x_0 \in s P$ for all $s$ close enough to $s_0$.

    This means that the difference between $L_P(s_0)$ and $L_P(s_0^-)$
    must be due to integer points in front facets of $s_0 P$,
    and thus $L_P(s_0) - L_P(s_0^-)$
    is the number of integer points in front facets of $s_0 P$.

    The analysis for $s > s_0$ is analogous.
\end{proof}

For example,
the polytope $P = [\frac 23, 1] \times [0, \frac13]$
(Figure~\ref{fig:ehrhart-function-small-square})
can be written as
\begin{align*}
    P = & \phantom{{}\cap{}}
                \{(x, y) \in \mathbb R^2 \mid x \leq 1 \} \\
        & {}\cap \{(x, y) \in \mathbb R^2 \mid -x \leq \tfrac23 \} \\
        & {}\cap \{(x, y) \in \mathbb R^2 \mid y \leq \tfrac13 \} \\
        & {}\cap \{(x, y) \in \mathbb R^2 \mid -y \leq 0 \}.
\end{align*}

This polytope has two front facets,
namely $F_1 = \{1\} \times [0, \frac 13]$
(the right edge)
and $F_3 = [\frac 23, 1] \times \{\frac 13\}$
(the upper edge),
and one back facet,
namely $F_2 = \{\frac23\} \times [0, \frac 13]$
(the left edge).
The bottom edge,
$F_4 = [\frac 23, 1] \times \{0\}$,
is contained in the $x$-axis,
which is determined by the equation $y = 0$,
and thus is neither a front facet nor a back facet.

At $s = 1$,
the Ehrhart function $L_P(s)$ has a right-discontinuity,
and the magnitude of the jump there is $1$;
this corresponds to $s F_1$ containing one integer point,
namely, $(1, 0)$.

At $s = \frac 32$,
the Ehrhart function $L_P(s)$ has a left-discontinuity,
and the magnitude of the jump is also $1$;
this corresponds to $s F_2$ containing one integer point,
again $(1, 0)$.

At $s = 3$,
we have both a left and a right-discontinuity at $L_P(s)$.
The left discontinuity has magnitude $3$,
which corresponds to the three points contained in $s(F_1 \cup F_3)$
(namely, $(2, 1)$, $(3, 1)$ and $(3, 0)$),
and the right discontinuity has magnitude $2$,
which corresponds to the two points contained in $s F_2$
(namely, $(2, 1)$ and $(2, 0)$).
Note that $(3, 1)$ is not counted twice,
despite appearing in both $F_1$ and $F_3$,
whereas $(2, 1)$ is counted both as an ``entering point''
(because it is contained in the front facet $F_3$)
and as a ``leaving point''
(because it is contained in the back facet $F_2$).

\subsection{Relative volumes of facets of a polytope}
\label{sec:relative-volumes-of-facets}

Let $P$ be a full-dimensional polytope.
In the proof of Lemma~\ref{thm:full-dimensional-translation-variant}
and of Theorem~\ref{thm:translation-variant},
we used the fact that
\begin{equation*}
    \lim_{s \to \infty} \frac{L_P(s)}{s^d} = \vol P.
\end{equation*}

This may be shown by noting that
\begin{align*}
    \frac{L_P(s)}{s^d}
        &= \frac{1}{s^d} \#\left( P \cap \frac 1 s \mathbb Z^d \right) \\
        &= \sum_{x \in \frac 1s \mathbb Z^d} \frac{1}{s^d} [x \in P].
\end{align*}
The last sum is,
in fact,
a Riemann sum for the indicator function of $P$,
and the fact that $P$ is Jordan-measurable
guarantees that such a sum approaches $\vol P$
by letting $s \to \infty$.

We face problems
when extending this notion to polytopes which are not full-dimensional.
For example,
consider the polytopes $F = \{1\} \times [0, 1]$
and $F' = \conv\{(1, 0), (0, 1)\}$
(Figure~\ref{fig:one-dimensional-polytopes-in-R2}).
If $s$ is an integer,
we have $L_F(s) = L_{F'}(s) = s + 1$,
and $L_F(s) = L_{F'}(s) = 0$ otherwise.
Therefore,
the analogous limits
\begin{equation*}
    \lim_{s \to \infty} \frac{L_F(s)}{s}
    \quad \text{and} \quad
    \lim_{s \to \infty} \frac{L_{F'}(s)}{s}
\end{equation*}
do not exist.

\begin{figure}[ht]
    \centering
    \begin{tikzpicture}
        \draw (-1, 0) -- (2, 0);
        \draw (0, -1) -- (0, 2);
        \foreach \x in {-1, ..., 2}
            \foreach \y in {-1, ..., 2}
                \fill [gray] (\x, \y) circle [radius = 1pt];

        \draw [line width = 2pt, blue] (0, 1) -- node [above] {$F$} (1, 1);
    \end{tikzpicture}
    \qquad
    \begin{tikzpicture}
        \draw (-1, 0) -- (2, 0);
        \draw (0, -1) -- (0, 2);
        \foreach \x in {-1, ..., 2}
            \foreach \y in {-1, ..., 2}
                \fill [gray] (\x, \y) circle [radius = 1pt];

        \draw [line width = 2pt, blue] (0, 1) -- node [above right] {$F'$} (1, 0);
    \end{tikzpicture}
    \caption{
        Two one-dimensional polytopes in $R^2$.
    }
    \label{fig:one-dimensional-polytopes-in-R2}
\end{figure}
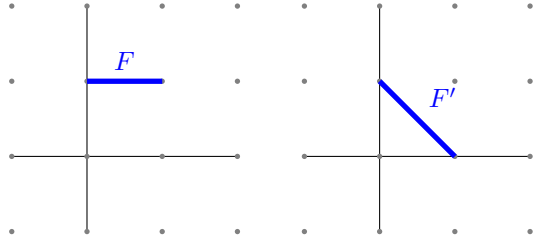

This problem may be solved by limiting the domain
over which we take the limit.
In these examples,
we would have something like
\begin{equation*}
    \lim_{\substack{
        s \to \infty \\
        s \in \mathbb Z
    }}
    \frac{L_F(s)}{s}
    =
    \lim_{\substack{
        s \to \infty \\
        s \in \mathbb Z
    }}
    \frac{L_{F'}(s)}{s}
    =
    1.
\end{equation*}

As expected,
we don't get the lengths of $F$ or $F'$,
but their ``relative lengths''.
We will define the \emph{relative volume} of a semi-rational polytope as follows.

If $P$ is an $l$-dimensional semi-rational polytope
contained in $\mathbb R^l \times \{(0, \dots, 0)\}$,
let $P'$ be its projection to $\mathbb R^l$.
The polytope $P'$ will be a full-dimensional polytope,
and thus we define the relative volume $\rvol P$ to be $\vol P$.

If $P$ is a $l$-dimensional semi-rational polytope in $\mathbb R^d$
such that the affine span $\aff P$ contains the origin,
then $\aff P$ is a vector space,
and since $P$ is semi-rational,
$\aff P$ is, in fact,
a rational vector space
(that is, $H$ is generated by integer vectors).
Let $A$ be any unimodular transform on $\mathbb R^d$
which maps $\aff P$ to $\mathbb R^k \times \{(0, \dots, 0)\}$.
The relative volume of $P$ is then defined to be $\rvol A P$.

Finally,
if $P$ is an arbitrary semi-rational polytope,
let $v \in \aff P$ be any vector,
and define the relative volume of $P$ to be $\rvol (P - v)$.

We leave to the reader
showing that this definition does not depend on the choices of $A$ or $v$.
This is an extension of the definition found in~\cite[chapter~5.4]{ccd}
to semi-rational polytopes.

We then have the following.

\begin{lemma}
    \label{thm:limit-is-relative-volume}
    Let $P \subseteq \mathbb R^d$ be a semi-rational polytope.
    For each vector $v \in \mathbb R^d$,
    let $E_v \subseteq \mathbb R$
    be the set of all $s$ such that $\aff s(P + v)$ contains integer points.
    Then whenever $E_v$ is unbounded we have
    \begin{equation*}
        \lim_{\substack{
            s \to \infty \\
            s \in E_v
        }}
        \frac{ L_{F + v}(s) }{ s^{\dim P} } = \rvol F,
    \end{equation*}
    and the limit is uniform in $v$.
\end{lemma}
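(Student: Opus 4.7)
The plan is to reduce the claim to the classical Riemann-sum estimate for full-dimensional polytopes. Set $k = \dim P$. Since $P$ is semi-rational, the linear subspace $W$ parallel to $\aff P$ is the intersection of the rational hyperplanes $\langle a_i, x \rangle = 0$ (over those $i$ whose inequality is tight on $P$), hence $W$ is a rational $k$-dimensional subspace of $\mathbb{R}^d$. Therefore one can find a unimodular transformation $A \in \mathrm{GL}_d(\mathbb{Z})$ mapping $W$ onto $\mathbb{R}^k \times \{0\}$. Replacing $(P, v)$ by $(AP, Av)$ preserves $L_{P+v}(s)$, $\rvol P$, and the set $E_v$, so I may assume $\aff P = \mathbb{R}^k \times \{c_2\}$ for some fixed $c_2 \in \mathbb{R}^{d-k}$, and write $P = P' \times \{c_2\}$ where $P' \subseteq \mathbb{R}^k$ is full-dimensional with $\vol P' = \rvol P$.

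Next, I reinterpret the lattice-point count. For $v = (v_1, v_2) \in \mathbb{R}^k \times \mathbb{R}^{d-k}$, a direct computation gives $s(P+v) = (sP' + sv_1) \times \{s(c_2 + v_2)\}$, and its affine hull meets $\mathbb{Z}^d$ precisely when $s(c_2 + v_2) \in \mathbb{Z}^{d-k}$, which is exactly the defining condition $s \in E_v$. For such $s$, projection onto the first $k$ coordinates yields a bijection
\[
    s(P+v) \cap \mathbb{Z}^d \longleftrightarrow (sP' + sv_1) \cap \mathbb{Z}^k,
\]
so that $L_{P+v}(s) = \#\bigl((sP' + sv_1) \cap \mathbb{Z}^k\bigr)$, which is the classical lattice-point count for the translate $P' + v_1$ of a full-dimensional polytope in $\mathbb{R}^k$.

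Finally, I apply the Riemann-sum identity
\[
    \frac{L_{P+v}(s)}{s^k} = \sum_{y \in \frac{1}{s}\mathbb{Z}^k} \frac{1}{s^k}\,[\,y \in P' + v_1\,],
\]
whose right-hand side is a Riemann sum for the indicator of $P' + v_1$ that tends to $\vol(P' + v_1) = \vol P' = \rvol P$ as $s \to \infty$. The standard estimate bounds the discrepancy from $\vol P'$ by the number of $s^{-1}$-cubes meeting $\partial(P' + v_1)$ divided by $s^k$, which is $O(s^{-1})$ with a constant depending only on $\vol_{k-1}(\partial P')$. Because boundary measure is translation-invariant, this constant is independent of $v_1$, yielding the uniformity in $v$.

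The main obstacle is the bookkeeping in the reduction step: one must verify that $W$ is rational, that the unimodular $A$ exists and preserves $L$, $\rvol$ and $E_v$ simultaneously, and that the product decomposition $P = P' \times \{c_2\}$ interacts correctly with arbitrary translations $v \in \mathbb{R}^d$. Each check is routine, and once the reduction is complete the substantive content is purely the classical Riemann-sum estimate.
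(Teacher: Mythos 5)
Your proof is correct and follows essentially the same route as the paper's: a unimodular transformation reduces to a coordinate subspace, the lattice-point count collapses to a full-dimensional count in $\mathbb R^k$, and the uniform Riemann-sum estimate (with the boundary term independent of the translate) finishes the argument. The only cosmetic difference is that you handle arbitrary $v$ via the product decomposition $s(P+v) = (sP'+sv_1)\times\{s(c_2+v_2)\}$, whereas the paper first shifts by an integer point of $\aff s(P+v)$ to land in the subspace case; both amount to the same bookkeeping.
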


A more precise
(though less clear)
way of expressing the above limit is:
for every $\varepsilon > 0$,
there is a $N > 0$ such that,
for all vectors $v$ and all $s > N$,
if the affine span of $s(P + v)$ contains integer points,
then
\begin{equation*}
    \left| \rvol F - \frac{ L_{F + v}(s) }{ s^{\dim P} } \right| < \varepsilon.
\end{equation*}

The uniformity in $v$ will be important later.

\begin{proof}
    If $P$ is full dimensional,
    then
    \begin{align*}
        \frac{L_{P + v}(s)}{s^d}
            &= \frac{1}{s^d} \#\left( (P + v) \cap \frac 1 s \mathbb Z^d \right) \\
            &= \sum_{x \in \frac 1s \mathbb Z^d} \frac{1}{s^d} [x + v \in P],
    \end{align*}
    which is a Riemann sum for the indicator function $\mathds 1_P$ of $P$.
    Since $P$ is Jordan-measurable,
    any such sum may be made close to $\vol P$
    just by making $\frac 1 s$ small,
    regardless of the choice of $v$.
    Thus
    \begin{equation*}
        \lim_{s \to \infty} \frac{L_{P + v}(s)}{s^d} = \vol P = \rvol P,
    \end{equation*}
    and the limit is uniform in $v$.

    Next,
    assume that $P \subseteq \mathbb R^d$ is $l$-dimensional,
    that $\aff P$ contains the origin,
    and that $v \in \aff P$.
    Let $A$ be any unimodular transform
    which maps $\aff P$ to $\mathbb R^l \times \{(0, \dots, 0)\}$,
    and let $P'$ and $v'$ be the projections of $AP$ and $Av$ to $\mathbb R^l$,
    respectively.
    Then $L_{P + v}(s) = L_{P' + v'}(s)$,
    so this case reduces to the previous.

    Finally,
    let $P$ be an arbitrary semi-rational polytope
    and $v$ and arbitrary vector.
    Choose $v_0 \in \aff P$
    and define $P' = P - v_0$.
    Note that $\aff P'$ is the translation of $\aff P$
    which passes through the origin.

    Let $s \in E_v$ be fixed;
    that is,
    there exists an integer vector $w$ in $\aff s(P + v)$.
    Then $\frac 1s w - v$ is contained in $\aff P$,
    so if we let $u = \frac 1s w - v - v_0$ we have
    \begin{align*}
        L_{P + v}(s)
            &= L_{P + v - \frac1s w}(s) \\
            &= L_{P' - u}(s).
    \end{align*}

    Since $u \in \aff P'$,
    this case follows from the previous.
\end{proof}

We are most interested in the case when the polytope has codimension $1$,
because this is the case of facets of full-dimensional polytopes
(and Lemma~\ref{thm:jump-magnitudes} already hinted that this case is important).
Let $F$ be a $(d-1)$-dimensional polytope in $\mathbb R^d$
and suppose $F$ is contained in the hyperplane
\begin{equation*}
    H = \{x \in \mathbb R^d \mid \langle a, x \rangle = b \},
\end{equation*}
where $a$ is a nonnegative integer vector and $b \in \mathbb R$ is arbitrary.
We may assume $a$ is a ``primitive vector'';
that is,
there is no integer $k > 1$ such that $\frac 1k a$ is integral,
or,
equivalently,
the greatest common divisor of all entries of $a$ is $1$.
As a consequence,
the set of all possible values for $\langle a, x \rangle$,
for integer $x$,
is the set of all integers;
that is,
\begin{equation*}
    \{ \langle a, x \rangle \mid x \in \mathbb Z^d \} = \mathbb Z.
\end{equation*}

Therefore
the hyperplane $s(H + v)$ has integer points
if and only if $s(b + \langle a, v \rangle)$ is an integer.
Thus,
in this case,
Lemma~\ref{thm:limit-is-relative-volume} reads
\begin{equation*}
    \lim_{\substack{
        s \to \infty \\
        s(b + \langle a, v \rangle) \in \mathbb Z
    }}
    \frac{L_{P + v}(s)}{s^{d-1}}
    = \rvol F.
\end{equation*}

\subsection{Isolating the facet with the largest vector}

Again,
write $P$ as
\begin{equation*}
    P = \bigcap_{i = 1}^n \{ x \in \mathbb R^d \mid \langle a_i, x \rangle \leq b_i \},
\end{equation*}
where each $a_i$ is a primitive integer vector.

A consequence of Lemma~\ref{thm:jump-magnitudes}
is that all discontinuities of $L_P(s)$
are caused by integer points passing through facets of $P$.
We will focus now on the left-discontinuities,
which are caused by integer points passing through front facets of $P$.

Let $F_i$ be the $i$th facet;
that is,
\begin{equation*}
    F_i = P \cap \{x \in \mathbb R^d \mid \langle a_i, x \rangle = b_i\}.
\end{equation*}
If $b_i > 0$,
so that $F_i$ is a front facet,
then $F_i$ is ``eligible'' for causing left-discontinuities on $P$.
Such discontinuities will only be caused by $F_i$ if $s b_i$ is an integer;
moreover,
for small values of $s$,
it might happen that $s F_i$ is too small to contain integer points.
However,
as long as $F_i$ is not a ``degenerate facet''
(that is, $F_i$ has codimension $1$),
its relative volume will be positive;
thus Lemma~\ref{thm:limit-is-relative-volume} guarantees that,
for all large enough $s$,
the polytope $s F_i$ will contain integer points
whenever $s b_i$ is an integer.

Therefore,
the discontinuities of the function $L_P(s)$ give some clues about $b_i$;
that is,
$s$ may only be a left-discontinuity point
if $s b_i$ is an integer for some $b_i > 0$,
and eventually all such $s$ are left-discontinuity points.
The problem is that these discontinuities give clues for all $b_i$ at once,
so we need a way of isolating such clues for each $b_i$.

We will look at the discontinuities of $L_{P + w}(s)$,
for a certain infinite collection of integer $w$.
If we choose $w_k = k a_1$,
for example,
then the left-discontinuities of $L_{P + w_k}(s)$
occur only when $s(b_i + k\langle a_i, a_1 \rangle)$ is an integer.
For larger $k$,
the spacing between discontinuity points of $L_{P + w_k}(s)$ decreases.
For now,
assume that $a_1$ has the largest norm among all $a_i$;
then the factor $\langle a_1, a_i \rangle$ will be largest for $i = 1$,
and thus
(for all arbitrarily large $k$)
the discontinuities coming from the facet $F_1 + w_k$
will be the closest among all facets of $P + w_k$.

However,
the fact these discontinuities are interleaved
(or even overlapping)
is what makes things difficult.
We will use the following technical lemma;
it essentially provides us with a ``window'' $(\alpha_k, \alpha_k + \epsilon_k)$
where we can,
at least infinitely often,
be sure only the discontinuities stemming from $a_1$ appear.

\begin{lemma}
    \label{thm:isolating-largest-vector}
    Let $a_1, \dots, a_n$ be primitive integer vectors in $\mathbb R^d$,
    with $\norm{a_1} \geq \norm{a_i}$ for all $i$.
    Then there is an integer vector $w_0$
    and a sequence $(\alpha_k, \alpha_k + \epsilon_k)$ of intervals
    such that,
    for all possible choice of real numbers $b_1, \dots, b_n$,
    the following properties are true:
    \begin{enumerate}[series = technical-lemma]
        \item \label{item:w_0-not-orthogonal}
            $ \langle a_i, w_0 \rangle \neq 0$ for all $i$;

        \item \label{item:w_0-same-direction-as-a_1}
            $ \langle a_1, w_0 \rangle > 0 $;

        \item \label{item:alpha_k-larger-than-k}
            $\alpha_k > k$ for all $k$;

        \item \label{item:alpha_k-tends-to-infinity}
            $\displaystyle
                \lim_{k \to \infty} |\alpha_k - k| = 0
            $;

        \item \label{item:epsilon_k-tends-to-zero}
            $\displaystyle
                \lim_{k \to \infty} \epsilon_k = 0
            $;

        \item \label{item:a_1-provokes-discontinuities}
            For all sufficiently large $k$,
            there is either one or two distinct values of $s$
            in $(\alpha_k, \alpha_k + \epsilon_k)$
            such that
            \begin{equation*}
                s(b_1 + k \langle a_1, w_0 \rangle)
            \end{equation*}
            is an integer;
            and

        \item \label{item:no-other-discontinuities}
            There exists infinitely many $k$ such that,
            for all $i \geq 2$ such that $\langle a_i, w_0 \rangle > 0$,
            there is no $s \in (\alpha_k, \alpha_k + \epsilon_k)$ such that
            \begin{equation*}
                s(b_i + k \langle a_i, w_0 \rangle)
            \end{equation*}
            is an integer.
    \end{enumerate}
\end{lemma}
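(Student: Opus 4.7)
The plan is to choose $w_0$ approximately in the direction of $a_1$ so that $c_1 := \langle a_1, w_0\rangle$ strictly dominates every $c_i := \langle a_i, w_0\rangle$ for $i \neq 1$, to calibrate the windows $(\alpha_k, \alpha_k+\epsilon_k)$ so that their width just exceeds the $F_1$-discontinuity spacing $1/(b_1+kc_1)$ but is strictly smaller than every other $F_i$-discontinuity spacing $1/(b_i+kc_i)$, and finally to use equidistribution to obtain infinitely many $k$ at which the window avoids every $F_i$-discontinuity.

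First I would set $w_0 := N a_1 + u$ for a large positive integer $N$ and a fixed integer vector $u$ chosen so that $\langle a_i, u\rangle \neq 0$ whenever $\langle a_i, a_1\rangle = 0$; such a $u$ exists because the bad vectors lie in a finite union of proper rational hyperplanes. By Cauchy--Schwarz and primitivity, $\langle a_i, a_1\rangle < \|a_1\|^2$ whenever $a_i \neq a_1$ (assuming an irredundant presentation), so for $N$ large enough $c_1 > c_i$ strictly for all $i \neq 1$, $c_1 > 0$, and $c_i \neq 0$ for all $i$. In particular items 1 and 2 hold and $\beta := \max\{c_i/c_1 : i \geq 2,\, c_i > 0\} < 1$.

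Next I would fix a constant $\lambda \in (1, \min(2, 1/\beta))$, a nonempty interval. For each sufficiently large $k$, let $s_k$ be the smallest real greater than $k$ with $s_k(b_1 + kc_1) \in \mathbb Z$, and set $\alpha_k := s_k - \tfrac12(s_k - k)$ and $\epsilon_k := \lambda/(b_1 + kc_1)$. Items 3, 4, 5 follow from $s_k - k \leq 1/(b_1+kc_1) = O(1/k)$. A direct computation with the chosen $\lambda$ shows that $(\alpha_k, \alpha_k + \epsilon_k)$ contains $s_k$ but does not reach back to the previous $F_1$-discontinuity $s_k - 1/(b_1+kc_1)$, and for large $k$ contains at most the next one $s_k + 1/(b_1+kc_1)$; this delivers item 6. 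Moreover $\epsilon_k(b_i + kc_i) \to \lambda c_i/c_1 \leq \lambda \beta < 1$, so for large $k$ the window is strictly shorter than every other $F_i$-spacing and contains at most one $F_i$-discontinuity.

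For item 7 I would show that infinitely often the window in fact contains \emph{zero} $F_i$-discontinuities for $i \in R := \{i \geq 2 : c_i > 0\}$. Using that $c_i \in \mathbb Z$ and $s_k(b_1 + kc_1) \in \mathbb Z$, this reduces to placing the joint orbit $k \mapsto (\{kb_i + \sigma_i(k)\})_{i \in R}$ inside a product box of positive measure, where $\sigma_i(k) = (1 - \{kb_1\})c_i/c_1 + o(1)$ is a shift coupling the $i$-th coordinate to $\{kb_1\}$. The main obstacle is this equidistribution step. I would handle it by passing to the subsequence $k \in q\mathbb Z$, where $q$ clears the denominators of the rational values in $\{b_1\}\cup\{b_i : i \in R\}$: on this subsequence the rational coordinates of the joint orbit vanish and the irrational ones jointly equidistribute in a connected closed subtorus of $\mathbb T^{|R|+1}$ containing the origin (Weyl--Kronecker), yielding infinitely many $k$ that place the whole orbit in the target box.
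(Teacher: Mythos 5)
Your construction does not prove the lemma as stated, because it violates the order of quantifiers. The lemma asks for a single integer vector $w_0$ \emph{and a single sequence of intervals} $(\alpha_k, \alpha_k + \epsilon_k)$ that work \emph{for every} choice of $b_1,\dots,b_n$; in other words, the windows must be fixed before the $b_i$ are known. Your windows are built from $s_k$, the smallest real $>k$ with $s_k(b_1+kc_1)\in\mathbb Z$, and from $\epsilon_k=\lambda/(b_1+kc_1)$ — both depend on $b_1$. This is not a cosmetic defect: in the application (Theorem~\ref{thm:recovering-right-hand-sides}) the $b_i$ are exactly the unknowns being recovered from the data $f_w$, and the procedure reads off the jumps of $g_w$ inside prescribed windows; if the windows require knowing $b_1$, the argument is circular. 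The paper's proof keeps the windows $b$-independent by taking $\epsilon_k=\epsilon_0/k$ with $1/\langle a_1,w_0\rangle<\epsilon_0<2/\langle a_1,w_0\rangle$ and $\alpha_k=k+\tfrac1k\tfrac{\varepsilon}{\varepsilon'}$; property 6 then follows merely from comparing $\epsilon_k$ with the spacing $1/(b_1+k\langle a_1,w_0\rangle)$ for large $k$, at the cost of not knowing \emph{where} in the window the $F_1$-discontinuity falls.

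A second, independent gap is in your item 7. After passing to $k\in q\mathbb Z$, the closure of the joint orbit is some closed connected subgroup (plus coset structure) of the torus, but you still must check that the positive-measure target box actually meets that closure; rational dependencies among the irrational $b_i$ can confine the orbit to a proper subtorus that a given box misses, and nothing in your sketch rules this out. The paper sidesteps equidistribution entirely: it splits $(k+\delta)(b_i+k\langle a_i,w_0\rangle)$ into four terms, uses simultaneous Dirichlet approximation only to make the $kb_i$ terms close to integers along infinitely many $k$, and then \emph{deterministically} forces the remaining term $\delta k\langle a_i,w_0\rangle$ into $(\varepsilon,1-\varepsilon)$ by the choice of $\alpha_k-k$ and $\epsilon_k$, using $\varepsilon'<\langle a_i,w_0\rangle<\tau\norm{a_1}^2-\varepsilon'$. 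You would need either to carry out the intersection check for your subtorus or to replace the equidistribution step by a one-sided approximation argument of this kind.
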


Before proving the lemma,
let us see what these properties mean,
intuitively.

Properties \ref{item:alpha_k-larger-than-k},
\ref{item:alpha_k-tends-to-infinity} and~\ref{item:epsilon_k-tends-to-zero}
says that the numbers $s \in (\alpha_k, \alpha_k + \epsilon_k)$
are of the form $k + \delta$,
where $\delta$ is a positive value
which approaches zero for large $k$.
This will guarantee the hypothesis of Lemma~\ref{thm:pseudodiophantine-equations}.

Define $w_k = k w_0$,
so that the interval $(\alpha_k, \alpha_k + \epsilon_k)$
is an ``interesting interval'' of discontinuities in $L_{P + w_k}(s)$.
We are assuming we know the vectors $a_i$,
but not the right-hand sides $b_i$.
Property~\ref{item:w_0-not-orthogonal}
guarantees that,
for all sufficiently large $k$,
the right-hand sides $b_i + \langle a_i, w_k \rangle$
will have the sign of $\langle a_i, w_0 \rangle$,
so we know that all left-discontinuities of $L_{P + w_k}(s)$
are caused by the $a_i$ for which $\langle a_i, w_0 \rangle > 0$.
Property~\ref{item:w_0-same-direction-as-a_1} says that
$a_1$ is one of these vectors which causes the left-discontinuity.

Define $V_k$ to be the sum of the magnitudes
of all left-discontinuities of $L_{P + w_k}(s)$
which happen in the interval $(\alpha_k, \alpha_k + \epsilon_k)$.
Property~\ref{item:a_1-provokes-discontinuities}
says that,
for all large enough $k$,
at least one of these discontinuities is caused by $F_1$;
Lemma~\ref{thm:limit-is-relative-volume}
and the fact that the values in $(\alpha_k, \alpha_k + \epsilon_k)$ are approximately $k$
says that this discontinuity has magnitude of approximately $k^{d-1} \rvol F_1$.
(Here we use the uniformity in $w$ claimed by that lemma.)

Intuitively,
each $V_k$ equals $k^{d-1} \rvol F_1$
plus some positive garbage.
Properties \ref{item:a_1-provokes-discontinuities}
and~\ref{item:no-other-discontinuities}
allows us to handle this garbage.

We may categorize the $V_k$ in three cases:
the \emph{good} case,
where $L_{P + w_k}(s)$ has just a single discontinuity
in $(\alpha_k, \alpha_k + \epsilon_k)$;
the \emph{not-so-good} case,
where there is two discontinuities of about the same magnitude,
and the \emph{bad} case,
which is the remaining cases.
Property~\ref{item:a_1-provokes-discontinuities}
says that,
in the good and the not-so-good case,
$V_k$ is approximately $k^{d-1} \rvol F_1$ and $2 k^{d-1} \rvol F_1$,
respectively,
and Property~\ref{item:no-other-discontinuities}
says that either the good or the not-so-good case happen infinitely often.

Therefore,
at least one of the following limits is true
(that is, exists and equals the expression in the right-hand side):
\begin{equation*}
    \lim_{\substack{
        k \to \infty \\
        \text{$V_k$ good}
    }}
    \frac{V_k}{k^{d-1}}
    = \rvol F_1
    \qquad
    \text{or}
    \qquad
    \lim_{\substack{
        k \to \infty \\
        \text{$V_k$ not-so-good}
    }}
    \frac{V_k}{k^{d-1}}
    = 2 \rvol F_1.
\end{equation*}

Therefore,
if we know $a_1, \dots, a_n$ and $L_{P + w}(s)$ for all integer $w$,
then we can determine $\rvol F_1$.
Later,
we will see how to do some sort of induction to get the values of $\rvol F_i$
for the remaining $i$, too;
this is why Lemma~\ref{thm:isolating-largest-vector}
was stated directly in terms of the vectors $a_i$,
without referring to the polytope.

\begin{proof}[Proof of Lemma~\ref{thm:isolating-largest-vector}]
    First,
    choose an integral vector $w' \in \{a_1\}^\perp$
    which is not orthogonal to any of the vectors $a_i$,
    for $a_i \neq \pm a_1$
    (such a $w'$ exist because the intersections $\{a_1\}^\perp \cap \{a_i\}^\perp$,
    where the ``forbidden'' $w'$ falls,
    have codimension $2$,
    and thus their union are a proper subset of $\{a_1\}^\perp$).

    Since
    \begin{equation*}
        \langle a_1, a_i \rangle < \norm{a_1} \norm{a_i} \leq \norm{a_1}^2,
    \end{equation*}
    for all large enough $\tau > 0$ we have
    \begin{equation*}
        \tau \norm{a_1}^2 > \langle a_i, w' + \tau a_1 \rangle.
    \end{equation*}

    So, choose $\tau > 0$ to be an integer so large that
    the above equation is satisfied,
    and also that
    \begin{equation*}
        \langle a_i, w' + \tau a_1 \rangle \neq 0
    \end{equation*}
    for all $i$.
    (If $\langle a_i, a_1 \rangle = 0$,
    then any $\tau$ will do,
    due to the choice of $w'$;
    otherwise,
    we may just make $\tau$ large,
    because the other terms in the expression are constant.)

    Define $w_0 = \tau a_1 + w'$.
    This definition of $w_0$ satisfies conditions
    \ref{item:w_0-not-orthogonal} and~\ref{item:w_0-same-direction-as-a_1}.

    Define $\epsilon_k = \frac1k \epsilon_0$,
    where $\epsilon_0$ is a value
    (to be determined later)
    which satisfies
    \begin{equation}
        \frac{1}{\langle a_1, w_0 \rangle}
        < \epsilon_0 <
        \frac{2}{\langle a_1, w_0 \rangle}.
        \label{eq:bounds-on-epsilon_0}
    \end{equation}

    This suffice to get property~\ref{item:a_1-provokes-discontinuities}:
    note that $s(b_1 + k\langle a_1, w_0 \rangle)$ is an integer
    if and only if $s = \frac{m}{b_1 + k\langle a_1, w_0 \rangle}$
    for some integer $m$.
    Therefore,
    any open interval in $\mathbb R$
    whose length is larger than $\frac{1}{b_1 + k \langle a_1, w_0 \rangle}$
    is guaranteed to contain at least one of these.
    The first inequality guarantees that
    \begin{equation*}
        \frac{1}{b_1 + k \langle a_1, w_0 \rangle} < \frac{\epsilon_0}{k} = \epsilon_k
    \end{equation*}
    for all large enough $k$.
    Since $(\alpha_k, \alpha_k + \epsilon_k)$ has lenght $\epsilon_k$,
    this guarantees that,
    for all large enough $k$,
    at least one $s$ in this interval satisfies
    $s(b_1 + k \langle a_1, w_0 \rangle) \in \mathbb Z$.

    Now,
    any open interval which contains three distinct numbers $s$ of the form
    $\frac{n}{b_1 + k\langle a_1, w_0 \rangle}$,
    for integer $n$,
    must have length larger than $\frac{2}{b_1 + k \langle a_1, w_0 \rangle}$.
    Here,
    the second inequality in~\eqref{eq:bounds-on-epsilon_0}
    guarantees that
    \begin{equation*}
        \epsilon_k = \frac{\epsilon_0}{k} < \frac{2}{b_1 + k \langle a_1, w_0 \rangle}
    \end{equation*}
    for all large enough $k$.
    This shows that,
    once we define $\epsilon_0$ properly
    (satisfying inequality~\eqref{eq:bounds-on-epsilon_0}),
    we satisfy both properties \ref{item:a_1-provokes-discontinuities}
    and~\ref{item:epsilon_k-tends-to-zero}.

    Property~\ref{item:no-other-discontinuities} will be the hardest.
    We will define $\alpha_k$ to be a value which is a bit greater that $k$,
    so that the values of the interval $(\alpha_k, \alpha_k + \epsilon_k)$
    are of the form $k + \delta$,
    where $\delta$ is ``small, but not too small''.
    The value $\delta$ will lie in $(\alpha_k - k, \alpha_k - k + \epsilon_k)$.
    We will make $\alpha_k - k + \epsilon_k$ close to $0$,
    so that $\delta$ will be small,
    but we will make sure $\alpha_k - k$ stays some ``safe distance'' from $0$,
    so that $\delta$ is not ``too small''.

    We want to control when $s(b_i + k \langle a_i, w_0 \rangle)$ is an integer.
    If $s = k + \delta$,
    this number is
    \begin{equation*}
        (k + \delta)(b_i + k \langle a_i, w_0 \rangle) =
            k^2 \langle a_i, w_0 \rangle +
            \delta b_i +
            k b_i +
            \delta k \langle a_i, w_0 \rangle.
    \end{equation*}
    If we can guarantee that this value is distant from an integer
    for all $i > 2$,
    we get property~\ref{item:no-other-discontinuities}.

    The first term of the above expression,
    $k^2 \langle a_i, w_0 \rangle$,
    will always bee an integer,
    so we have nothing to do.

    We will choose $\alpha_k$ and $\epsilon_k$
    so that $\alpha_k - k$ is proportional to $\frac 1k$.
    As $\epsilon_k = \frac{\epsilon_0}{k}$ is also proportional to $\frac 1k$,
    and $\delta$ is ``sandwiched'' between these two values,
    the value of $\delta$ will be proportional to $\frac 1 k$, too.
    This means that the second term,
    $\delta b_i$,
    will tend to zero for large $k$.

    The third term,
    $k b_i$,
    will be dealt with in an indirect way.
    By the ``simultaneous Dirichlet's approximation theorem'',
    for each $N$ there is a $k \in \{1, \dots, N^n\}$ such that
    all numbers $k b_i$ are within $\frac 1 N$ of an integer.
    By choosing larger and larger $N$,
    we guarantee the existence of arbitrarily large $k$
    where all the distance of $k b_i$ to the nearest integer
    is made arbitrarily small.
    (This indirect attack to $k b_i$ is the responsible for the phrase
    ``there exist infinitely many $k$''
    in the condition~\ref{item:no-other-discontinuities},
    as opposed to something like
    ``for all large enough $k$''.)

    So,
    the difficulties rests upon the fourth term,
    $\delta k \langle a_i, w_0 \rangle$.
    We want to place it in the interval $(\varepsilon, 1 - \varepsilon)$
    for some $\varepsilon > 0$,
    so that this term is always at least within $\varepsilon$ of the nearest integer.
    Since for the other three terms the distance to the nearest integer
    gets arbitrarily small,
    but this fourth term stays distant,
    we can guarantee that
    $(k + \delta)(\langle a_i, w_0 \rangle k + b_i)$ is not an integer.

    We will determine a suitable $\varepsilon > 0$ later.
    Let $\varepsilon' > 0$ be such that,
    for $i \geq 2$,
    we have $\langle a_i, w_0 \rangle > \varepsilon'$
    whenever $\langle a_i, w_0 \rangle > 0$,
    and $\langle a_i, w_0 \rangle < \tau \norm{a_1}^2 - \varepsilon'$
    for all $i \geq 2$.

    Define
    \begin{equation*}
        \alpha_k = k + \frac{1}{k} \frac{\varepsilon}{\varepsilon'}.
    \end{equation*}

    Observe that this definition of $\alpha_k$
    clearly satisfies conditions \ref{item:alpha_k-larger-than-k}
    and~\ref{item:alpha_k-tends-to-infinity}.
    We have $\delta > \frac{1}{k} \frac{\varepsilon}{\varepsilon'}$,
    and thus
    \begin{equation*}
        \delta k \langle a_i, w_0 \rangle > \delta k \varepsilon' > \varepsilon.
    \end{equation*}

    Define
    \begin{equation*}
        \epsilon_0 = \frac{1 - \varepsilon}{t \norm{a_1}^2 - \varepsilon'}
            - \frac{\varepsilon}{\varepsilon'}.
    \end{equation*}
    Then since $\epsilon_k = \frac{\epsilon_0}{k}$,
    we get
    \begin{equation*}
        \delta < \alpha_k - k + \epsilon_k
        = \frac{1}{k} \frac{1 - \varepsilon}{t \norm{a_1}^2 - \varepsilon'},
    \end{equation*}
    and thus
    \begin{equation*}
        \delta k \langle a_i, w_0 \rangle < \delta k (t \norm{a_1}^2 - \varepsilon')
        < 1 - \varepsilon.
    \end{equation*}

    This show that,
    regardless of the value of $\varepsilon > 0$,
    the definitions of $\alpha_k$ and $\epsilon_k$ guarantee that,
    if $s = k + \delta \in (\alpha_k, \alpha_k + \epsilon_k)$,
    then $\delta k \langle a_i, w_0 \rangle \in (\varepsilon, 1 - \varepsilon)$.
    The other terms in $s (b_i + k \langle a_i, w_0 \rangle)$
    will approximate integer values,
    thus showing property~\ref{item:no-other-discontinuities}.

    Now we will choose $\varepsilon > 0$
    so that the definition of $\epsilon_0$
    satisfy inequality~\eqref{eq:bounds-on-epsilon_0}.

    The inequality $\frac{1}{\langle a_1, w_0 \rangle} < \epsilon_0$
    can be rewritten as follows:
    \begin{align*}
        \epsilon_0 &> \frac{1}{\langle a_1, w_0 \rangle} \\
        \frac{1 - \varepsilon}{\tau \norm{a_1}^2 - \varepsilon'}
            - \frac{\varepsilon}{\varepsilon'}
            &> \frac{1}{\tau \norm{a_1}^2}
            \\
        \frac{1}{\tau \norm{a_1}^2 - \varepsilon'} &>
            \frac{1}{\tau \norm{a_1}^2} +
            \varepsilon \left(
                \frac{1}{\varepsilon'} +
                \frac{1}{\tau \norm{a_1}^2 - \varepsilon'}
            \right).
        \intertext{
            Analogously,
            the inequality $\epsilon_0 < \frac{2}{\langle a_1, w_0 \rangle}$
            can be rewritten as
        }
        \frac{1}{\tau \norm{a_1}^2 - \varepsilon'} &<
            \frac{2}{\tau \norm{a_1}^2} +
            \varepsilon \left(
                \frac{1}{\varepsilon'} +
                \frac{1}{\tau \norm{a_1}^2 - \varepsilon'}
            \right).
    \end{align*}

    As $\varepsilon' > 0$ is a small value,
    it is clear that
    \begin{equation*}
        \frac{1}{\tau \norm{a_1}^2}
        <
        \frac{1}{\tau \norm{a_1}^2 - \varepsilon'}
        <
        \frac{2}{\tau \norm{a_1}^2},
    \end{equation*}
    so an $\varepsilon > 0$ satisfying both inequalities above does exist.
    This guarantees condition~\ref{item:a_1-provokes-discontinuities},
    finishing the proof of the lemma.
\end{proof}

\subsection{Pseudo-Diophantine equations}

We will now show how to recover $b_1$.
From the discussion preceding the proof of Lemma~\ref{thm:isolating-largest-vector},
we have a sequence of values $V_k$
from which we extract $\rvol F_1$.
The $V_k$ represent the values of some discontinuities of $L_{P + w_k}(s)$
which we know are due to points passing through $F_1$.
We are interested
in these discontinuities.

For simplicity,
we will assume that there are infinitely many good $V_k$.
For all large enough $k$ for which $V_k$ is good,
there is precisely one real number $s_k$ in the interval
$(\alpha_k, \alpha_k + \epsilon_k)$
which corresponds to a discontinuity of $L_{P + w_k}(s)$,
and this discontinuity was caused by $F_1$.
That is,
we have infinitely many equations of the following form:
there is some integer $m_k$ such that
\begin{equation*}
    s_k (b_1 + k \langle a_1, w_0 \rangle) = m_k.
\end{equation*}

The following lemma asserts that
there is exactly one solution for $b_1$
of this infinite set of ``semi-Diophantine equations''.

\begin{lemma}
    \label{thm:pseudodiophantine-equations}
    Let $s_l \in \mathbb R$, $k_l \in \mathbb Z$ be given for each integer $l > 0$,
    where the $s_l$ are non-integer numbers
    such that $\{s_l\}$ approaches zero for $l \to \infty$.
    Then the infinite system of equations
    \begin{equation*}
        s_l (b + k_l) = m_l,
        \qquad
        \text{$l > 0$}
    \end{equation*}
    for $b \in \mathbb R$ and $m_l \in \mathbb Z$,
    has at most one solution.
\end{lemma}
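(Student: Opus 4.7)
The plan is to suppose that $b$ and $b'$ are two solutions with associated integer sequences $(m_l)$ and $(m'_l)$, set $c = b - b'$, and show $c = 0$. Subtracting the equations $s_l(b + k_l) = m_l$ and $s_l(b' + k_l) = m'_l$ cancels the $k_l$ term and reduces the problem to the following: if $c \in \mathbb{R}$ satisfies $s_l c = m_l - m'_l \in \mathbb{Z}$ for every $l$, then $c = 0$.

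Write $s_l = n_l + \delta_l$ with $n_l = \floor{s_l}$ and $\delta_l = \{s_l\} \in (0, 1)$; by hypothesis $\delta_l \to 0$. The first step is to show that $c$ must be an integer. For any two indices $l, l'$, the integer $s_l c - s_{l'} c$ differs from $(n_l - n_{l'}) c$ by $(\delta_l - \delta_{l'}) c$, whose absolute value is at most $(\delta_l + \delta_{l'}) |c|$ and hence tends to zero as $l, l' \to \infty$. Choosing pairs of indices with $n_l - n_{l'} = 1$ (which in the intended application is automatic, since $s_l$ lies in a shrinking interval containing the integer $l$, so $n_l = l$) and letting $l, l' \to \infty$ forces $c$ to lie within every $\varepsilon > 0$ of $\mathbb{Z}$; hence $c \in \mathbb{Z}$.

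The second step rules out nonzero integer $c$. In that case $n_l c \in \mathbb{Z}$ is automatic, so the integrality of $s_l c = n_l c + \delta_l c$ forces $\delta_l c \in \mathbb{Z}$. But $\delta_l > 0$ and $\delta_l \to 0$ imply $0 < |\delta_l c| < 1$ for all sufficiently large $l$, contradicting integrality. Hence $c = 0$, so $b = b'$.

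The step I expect to be the main obstacle is the first one, where the pointwise approximation of $(n_l - n_{l'}) c$ by integers must be upgraded to the global conclusion $c \in \mathbb{Z}$. This relies on being able to realize $n_l - n_{l'} = 1$ with both $l, l'$ arbitrarily large and $\delta_l, \delta_{l'}$ arbitrarily small; the structure of the application supplies exactly this, since the $s_l$'s are packed into intervals $(\alpha_k, \alpha_k + \epsilon_k)$ that shrink down onto consecutive integers, so every large integer appears as the integer part of some $s_l$.
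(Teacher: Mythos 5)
There is a genuine gap, and it sits exactly where you predicted: Step 1. After subtracting the two candidate solutions you retain only the differenced system $s_l c \in \mathbb Z$, and under the lemma's stated hypotheses this system does \emph{not} determine $c$. Concretely, take $c = \sqrt 2$ and $s_l = j_l/\sqrt 2$, where $j_l$ is chosen (using the equidistribution of $j\sqrt2/2$ modulo $1$) so that $0 < \{s_l\} < 1/l$: these $s_l$ are non-integers with $\{s_l\} \to 0$, yet $s_l c = j_l \in \mathbb Z$ for every $l$. (With $k_l = 0$ for all $l$ this even produces two genuine solutions $b = 0$ and $b = \sqrt 2$ of the original system, so some information beyond the differenced equations is indispensable; the paper's own proof quietly assumes the $k_l$ are pairwise distinct and uses that assumption in an essential way.) Your proposed patch --- finding pairs $l, l'$ with $\floor{s_l} - \floor{s_{l'}} = 1$ and both indices large --- is not supplied by the application either: the lemma is invoked for the sub-collection of $k$ for which $V_k$ is good (or not-so-good), which is only known to be infinite, so the integer parts $\floor{s_k} = k$ form an arbitrary infinite subset of $\mathbb N$ and need not contain consecutive pairs beyond any given point. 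Nor can you fall back on some fixed difference $d$ occurring infinitely often (which would give $dc \in \mathbb Z$ and let your Step 2 finish): a set such as $\{2^m\}$ has all pairwise differences distinct. Consistently with all this, in the counterexample above no two $s_l$ with small fractional parts have consecutive integer parts.

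Your Step 2 is correct, and is essentially the paper's treatment of the rational case. To close the gap you must keep the un-differenced equations and split on the rationality of $b$, as the paper does. If $b$ is irrational then every $s_l = m_l/(b+k_l)$ is irrational; writing the second solution as $b + \lambda$, the integrality of $s_1\lambda$ and $s_2\lambda$ gives $s_1/s_2 \in \mathbb Q$ unless $\lambda = 0$, and then combining the two original equations for indices with $k_1 \neq k_2$ forces $s_1 \in \mathbb Q$, a contradiction. If $b$ is rational then every $s_l = p_l/q_l$ is rational with $q_l \to \infty$ (because $0 < \{s_l\}$ and $\{s_l\} \geq 1/q_l \to 0$), and $s_l \lambda \in \mathbb Z$ forces $q_l$ to divide the numerator of $\lambda$, whence $\lambda = 0$. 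It is the first of these two branches --- the one that genuinely needs the $k_l$ --- that your differencing discards.
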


\begin{proof}
    Without loss of generality, we may assume all $s_l$ and all $k_l$ are different.
    Let $(b, m_1, m_2, m_3, \dots)$ be a solution for this system of equations.
    If we did not have the integrality constraint on $m_l$,
    then all the solutions of this infinite system of equations
    would have the form
    \begin{equation*}
        (b + \lambda, m_1 + s_1 \lambda, m_2 + s_2 \lambda, \dots),
        \qquad \lambda \in \mathbb R.
    \end{equation*}

    The integrality constraint dictates that $m_l + s_l \lambda$ is an integer,
    say $m'_l$;
    thus for some integer $\nu_l = m'_l - m_l$ we have $s_l \lambda = \nu_l$.

    Assume $b$ is irrational;
    then all $s_l$ are also irrational.
    Since
    \begin{equation*}
        \lambda = \frac{\nu_1}{s_1} = \frac{\nu_2}{s_2},
    \end{equation*}
    so we know that $\frac{s_1}{s_2} = \frac{\nu_1}{\nu_2}$ is a rational number.
    Since $b = \frac{m_1}{s_1} - k_1 = \frac{m_2}{s_2} - k_2$,
    we have
    \begin{align*}
        k_2 - k_1 &= \frac{m_1}{s_1} - \frac{m_2}{s_2} \\
        s_1( k_2 - k_1 ) &= m_1 - \frac{s_1}{s_2} m_2.
    \end{align*}

    Since $k_1 \neq k_2$,
    the above equation shows that $s_1$ is rational;
    this is a contradiction,
    unless $\nu_2 = 0$,
    which amounts to $\lambda = 0$.
    Therefore,
    if $b$ is irrational,
    we indeed have at most one solution.

    Assume now that $b$ is rational.
    Therefore,
    all $s_l$ are rational,
    so we may write $s_l = \frac{p_l}{q_l}$
    for relatively prime $p_l, q_l$.

    For $m_l + s_l \lambda$ to be an integer,
    $\lambda$ must be a rational number,
    say, $\lambda = \frac p q$.
    The number $s_l \lambda$ must also be an integer,
    say $r$;
    that is,
    \begin{equation*}
        p_l p = q_l q r.
    \end{equation*}
    Reducing both sides modulo $q_l$ gives
    \begin{equation*}
        p_l p \equiv 0 \mod{q_l};
    \end{equation*}
    as $p_l$ and $q_l$ are relatively prime,
    this shows that $q_l$ divides $p$.

    Finally,
    since we assumed that no $s_l$ is an integer,
    but the distance of $s_l$ to the nearest integer approaches zero as $l \to \infty$,
    we must have $\lim_{l \to \infty} q_l = \infty$.
    So $p$ is a multiple of arbitrarily large numbers,
    thus the only possibility is $p = 0$,
    which implies $\lambda = 0$,
    from which the lemma follows.
\end{proof}

The fact that each $s_k$ is contained in $(\alpha_k, \alpha_k + \epsilon_k)$,
together with properties~\ref{item:alpha_k-larger-than-k},
\ref{item:alpha_k-tends-to-infinity} and~\ref{item:epsilon_k-tends-to-zero}
from Lemma~\ref{thm:isolating-largest-vector},
guarantees that the fractional part $\{s_k\}$ of $s_k$,
although never zero,
approaches zero as $k \to \infty$.
Therefore,
we are in position to apply the lemma,
which thus determines $b_1$ uniquely.

\subsection{Piecing together the semi-rational case}

Throughout the last sections,
we argued how we could obtain $b_1$ and $\rvol F_1$
if we knew $a_1, \dots, a_n$ and $L_{P + w}(s)$
for all integer $w$ and all real $s > 0$.
During this discussion,
we collected some lemmas,
which we now will use to show how to obtain the remaining $b_i$.

\begin{theorem}
    \label{thm:recovering-right-hand-sides}
    Let $a_1, \dots, a_n \in \mathbb R^d$ be primitive integer vectors
    and for each integer $w$ let $f_w(s)$ be a function on $\mathbb R$.
    Then there is at most one set of numbers $b_1, \dots, b_n$ such that
    \begin{equation*}
        P = \bigcap_{i=1}^n \{x \in \mathbb R^d \mid \langle a_i, x \rangle \leq b_i\}
    \end{equation*}
    is a full-dimensional semi-rational polytope,
    that $f_w(s) = L_{P + w}(s)$ for all $s > 0$ and all integer $w$,
    and that the polytopes $F_i$ defined by
    \begin{equation*}
        F_i = P \cap \{x \in \mathbb R^d \mid \langle a_i, x \rangle = b_i\}
    \end{equation*}
    are faces of $P$.
\end{theorem}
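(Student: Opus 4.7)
The plan is to recover the $b_i$'s one at a time after sorting them in decreasing order of $\|a_i\|$ and inducting on the index. The base case $i=1$ is essentially the construction assembled in Section~\ref{sec:discontinuities-of-ehrhart-function} through the preceding subsection: apply Lemma~\ref{thm:isolating-largest-vector} with the largest-norm vector in the role of $a_1$ to obtain a direction $w_0$ and isolating windows $(\alpha_k, \alpha_k + \epsilon_k)$; read from each $f_{k w_0}$ the total magnitude $V_k$ of left-discontinuities inside the window; by properties~\ref{item:a_1-provokes-discontinuities} and~\ref{item:no-other-discontinuities} of the lemma, for infinitely many $k$ either $V_k/k^{d-1} \to \rvol F_1$ (the good case) or $V_k/k^{d-1} \to 2\rvol F_1$ (the not-so-good case), so whichever limit exists determines $\rvol F_1$. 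The unique location $s_k$ of the $F_1$-discontinuity inside each good window then yields an equation $s_k(b_1 + k\langle a_1, w_0 \rangle) \in \mathbb Z$, and Lemma~\ref{thm:pseudodiophantine-equations} (whose fractional-part-to-zero hypothesis follows from properties~\ref{item:alpha_k-larger-than-k},~\ref{item:alpha_k-tends-to-infinity} and~\ref{item:epsilon_k-tends-to-zero}) pins down $b_1$ uniquely.

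For the inductive step, suppose $b_1, \dots, b_{j-1}$ have already been determined from the family $\{f_w\}_{w \in \mathbb Z^d}$. The idea is to apply Lemma~\ref{thm:isolating-largest-vector} to the shortened list $a_j, a_{j+1}, \dots, a_n$, in which $a_j$ (relabeled if necessary) now plays the role of the largest-norm vector. This produces a new direction $w_0$ and windows that, for infinitely many $k$, suppress every discontinuity arising from an $F_l$ with $l > j$. Discontinuities from $F_l$ with $l < j$ may still appear in the window, but their locations are prescribed precisely by $s(b_l + k\langle a_l, w_0 \rangle) \in \mathbb Z$, and since we already know $b_l$ these locations are computable and can be subtracted out from the bookkeeping. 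What remains inside the window for infinitely many $k$ is exactly the set of $F_j$-discontinuities, and the rest of the argument proceeds as in the base case: extract $\rvol F_j$ from the asymptotic jump magnitude via Lemma~\ref{thm:limit-is-relative-volume}, and apply Lemma~\ref{thm:pseudodiophantine-equations} to the equations $s_k(b_j + k\langle a_j, w_0 \rangle) \in \mathbb Z$ to pin down $b_j$.

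The main obstacle lies in the subtraction step of the induction: one needs the collection of $k$ that simultaneously survive both the property~\ref{item:no-other-discontinuities} filter and the removal of the known $F_l$-discontinuities ($l < j$) to still be infinite. This amounts to strengthening Lemma~\ref{thm:isolating-largest-vector} so that its simultaneous Dirichlet approximation step controls the fractional parts of $k b_l$ for every $l \neq j$, rather than only for $l$ appearing in the shortened list. Since Dirichlet's simultaneous approximation theorem accommodates any finite collection of real numbers, this strengthening is essentially free; the delicate point is verifying that the $\varepsilon, \varepsilon'$ bookkeeping at the end of the proof of Lemma~\ref{thm:isolating-largest-vector} continues to yield a valid choice of $\alpha_k, \epsilon_k$ with the enlarged collection of fractional parts to be controlled. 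One must also use the hypothesis that each $F_i$ is a genuine face of $P$ (of codimension one in $P$) to guarantee $\rvol F_i > 0$, without which Lemma~\ref{thm:limit-is-relative-volume} degenerates and the jump-magnitude argument collapses.
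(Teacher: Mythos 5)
Your overall strategy (sort by nonincreasing $\norm{a_i}$, induct on $j$, apply Lemma~\ref{thm:isolating-largest-vector} to the tail $a_j,\dots,a_n$, extract $\rvol F_j$ from the jump magnitudes via Lemma~\ref{thm:limit-is-relative-volume}, and pin down $b_j$ via Lemma~\ref{thm:pseudodiophantine-equations}) is the paper's strategy. But your mechanism for neutralizing the already-determined facets $F_l$, $l<j$, in the inductive step has a genuine gap. You propose to keep their discontinuities out of the windows $(\alpha_k,\alpha_k+\epsilon_k)$ by extending the simultaneous Dirichlet step to control $\{k b_l\}$ for all $l$, and you call this ``essentially free.'' It is not: controlling $\{k b_l\}$ only handles the term $k b_l$, while the decisive term in the proof of Lemma~\ref{thm:isolating-largest-vector} is $\delta k \langle a_l, w_0\rangle$, which is confined to $(\varepsilon,1-\varepsilon)$ only because $\langle a_i,w_0\rangle<\tau\norm{a_1}^2=\langle a_1,w_0\rangle$ for $i\geq 2$, a consequence of Cauchy--Schwarz and $\norm{a_i}\leq\norm{a_1}$. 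In the inductive step $w_0\approx\tau a_j$, and the earlier vectors satisfy $\norm{a_l}\geq\norm{a_j}$, so $\langle a_l,w_0\rangle$ can exceed $\langle a_j,w_0\rangle$; the $F_l$-discontinuities are then \emph{denser} than the $F_j$-discontinuities, and any window long enough to be guaranteed an $F_j$-discontinuity necessarily also contains an $F_l$-discontinuity. Avoidance is impossible in general. The paper instead \emph{subtracts} the known contribution of each $F_l$, $l<j$, from $f_w$ (the corrector $S(s(b_l+\langle a_l,w\rangle))\,\rvol F_l$), tolerating $O(s^{d-2})$ residual jumps which vanish after dividing by $s^{d-1}$. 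Note this requires the induction to carry $\rvol F_l$ forward, not only $b_l$ as in your inductive hypothesis.

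Two smaller points. First, the theorem only assumes each $F_i$ is a \emph{face}, not a facet; some $F_i$ may be lower-dimensional (this flexibility is needed in the proof of Corollary~\ref{thm:semirational-complete-invariant}), so the argument must first decide whether $F_j$ is a facet — the paper does this by testing whether $\liminf_k V_k/k^{d-1}>0$ — rather than assuming $\rvol F_i>0$ throughout as you do. Second, your recovery of $b_j$ via Lemma~\ref{thm:pseudodiophantine-equations} uses only the good windows; if all but finitely many surviving windows are not-so-good, you cannot tell which of the two discontinuities $s_k<s'_k$ belongs to $F_j$, and the paper needs a separate argument there, recovering $b_j$ as $\limsup_k \bigl( \frac{1}{s'_k-s_k}-k\langle a_j,w_0\rangle \bigr)$ from the spacing of consecutive $F_j$-discontinuities.
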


Observe that here we are not assuming that $n$ is the number of facets of $P$;
in particular,
every facet of $P$ will be an $F_i$ for some $i$,
but some of the $F_i$ might be lower-dimensional faces of $P$.
This will be important
in the proof of Corollary~\ref{thm:semirational-complete-invariant}.

\begin{proof}
    Order the vectors $a_i$ in nonincreasing order of length;
    that is,
    \begin{equation*}
        \norm{a_1} \geq \norm{a_2} \geq \dots \geq \norm{a_n}.
    \end{equation*}

    For each $i$,
    we will determine whether $F_i$ is a facet of $P$ or not
    (that is, whether $F_i$ has codimension $1$),
    and,
    in the case it is a facet,
    we will determine $\rvol F_i$ and $b_i$.
    Then,
    given $b_i$ for the facets of $P$,
    the polytope is completely determined;
    since then $b_i$ is the smallest real number such that
    \begin{equation*}
        P \subseteq \{ x \in \mathbb R^d \mid \langle a_i, x \rangle \leq b_i \},
    \end{equation*}
    this determines the remaining $b_i$.

    We will proceed by induction,
    so assume that,
    for some $j \geq 1$,
    we have determined everything for the faces $F_i$ with $i < j$.

    By Lemma~\ref{thm:jump-magnitudes},
    all discontinuities of the function $L_{P + w}(s)$
    are due to integer points passing through the facets of $P$.
    If $s_0$ is a point of left-discontinuity of $L_{P + w}(s)$,
    let $F_{i_1} + w, \dots, F_{i_l} + w$ be the front facets of $P + w$
    such that $s_0 (b_i + \langle a_i, w \rangle)$ is an integer;
    that is, $F_{i_1}, \dots, F_{i_l}$
    are the facets which caused the discontinuity at $s_0$.
    Then Lemma~\ref{thm:limit-is-relative-volume} says that
    if $U$ is the magnitude of this discontinuity,
    then $\frac{U}{s_0^{d-1}}$
    is approximately $\rvol F_{i_1} + \dots + \rvol F_{i_l}$.

    Here,
    the fact that Lemma~\ref{thm:limit-is-relative-volume}
    asserts that the limit is uniform in the translation vector $w$
    guarantees that
    the number $\frac{U}{s^{d-1}}$
    will approximate $\rvol F_{i_1} + \dots + \rvol F_{i_l}$
    just by making $s$ large,
    regardless of $w$.

    Let $S(x) = 0^{d-1} + 1^{d-1} + \dots + \floor{x}^{d-1}$ for $x \geq 0$
    and $S(x) = -S(-x)$ for $x \leq 0$,
    and define
    \begin{equation*}
        g_w(s) = f_w(s) - \sum_{i=1}^{j-1} S(s(b_i + \langle a_i, w \rangle)) \gamma_i,
    \end{equation*}
    where $\gamma_i$ is $\rvol F_i$,
    if $F_i$ is a facet of $P$,
    and zero otherwise.

    If $x = s(b_i + \langle a_i, w \rangle)$ is a positive integer,
    then the term $S(x) \rvol F_i$
    will cause a left-discontinuity of magnitude $x^{d-1} \rvol F_i$;
    if $x$ is a negative integer,
    then the term $S(x) \rvol F_i$
    will cause a right-discontinuity of magnitude $x^{d-1} \rvol F_i$.
    Subtracting these values from $f_w$ ``cleans'' the function
    from the influence of the discontinuities caused by $F_i$,
    for $i < j$.

    In terms of $\frac{U}{s^{d-1}}$,
    this says that,
    if $V$ is the magnitude of the left discontinuity at $s_0$ of $g_w$,
    then $\frac{V}{s_0^{d-1}}$ approximates the sum of the $\rvol F_{i_m}$
    for which $i_m \geq j$.

    Since we know whether $F_i$ is a facet or not,
    and we know $\rvol F_i$ and $b_i$ in the case it is,
    we may construct the function $g_w$,
    so we may work with the ``cleaner'' $V$ instead of with $U$.
    (Note that removing the terms $S(x) \rvol F_i$
    does not perfectly eliminate the influence of the facets $F_i$,
    because the magnitudes of the jumps only approximate $s^{d-1} \rvol F_i$.
    Therefore,
    there might be some ``residual'' discontinuities of order $O(s^{d-2})$ in $g_w$.
    Since we will always divide the magnitude of the discontinuities by $s^{d-1}$,
    we may safely ignore these residual discontinuities.)

    Use Lemma~\ref{thm:isolating-largest-vector}
    with the vectors $a_j, \dots, a_n$
    to get appropriate $w_0$, $\alpha_k$ and $\epsilon_k$.

    Define $V_k$ to be sum of the magnitudes of the left-discontinuities
    of the function $g_k$ in the interval $(\alpha_k, \alpha_k + \epsilon_k)$.
    We will first determine whether $F_j$ is a facet or not.

    Property~\ref{item:a_1-provokes-discontinuities} says that,
    if $F_j$ is a facet
    (so that its $(d-1)$-dimensional volume is nonzero),
    the value of $\frac{V_k}{k^{d-1}}$
    must be at least $\rvol F_j$.
    Property~\ref{item:no-other-discontinuities} says that,
    for arbitrarily many $k$,
    the value of $\frac{V_k}{k^{d-1}}$
    will be either $\rvol F_j$ or $2 \rvol F_j$.
    Conversely,
    if $F_j$ is not a facet,
    then this number will approach zero.
    Therefore,
    $F_j$ is a facet if and only if
    \begin{equation*}
        \liminf_{k \to \infty} \frac{V_k}{k^{d-1}} > 0.
    \end{equation*}

    If $F_j$ is not a facet,
    there is nothing more to do,
    so assume that $F_j$ is a facet of $P$.

    Let us say that a value $V_k$ is \emph{good}
    if the interval $(\alpha_k, \alpha_k + \epsilon_k)$
    contains exactly one discontinuity;
    \emph{not-so-good},
    if the interval contains exactly two discontinuities,
    and the magnitude of the jump in both cases is approximately the same
    (that is, their ratio is close to $1$);
    and \emph{bad} otherwise.

    If $V_k$ is good,
    then we know from Lemma~\ref{thm:limit-is-relative-volume}
    that $V_k$ is approximately $k^{d-1} \rvol F_j$.
    If $V_k$ is not-so-good,
    we know at least one of the two discontinuities
    must have a magnitude of approximately $k^{d-1} \rvol F_j$,
    so the other discontinuity must also have that magnitude,
    and thus $V_k$ is approximately $2 k^{d-1} \rvol F_j$.

    Property~\ref{item:no-other-discontinuities}
    of Lemma~\ref{thm:isolating-largest-vector} says that
    there are infinitely many $V_k$ which are either good or not-so-good.
    This means that at least one of the limits
    \begin{equation*}
        \liminf_{\substack{
                k \to \infty \\
                \text{$V_k$ good}
            }}
            \frac{V_k}{k^{d-1}}
        \qquad
        \text{and}
        \qquad
        \frac12
        \liminf_{\substack{
                k \to \infty \\
                \text{$V_k$ not-so-good}
            }}
            \frac{V_k}{k^{d-1}}
    \end{equation*}
    exists and equals $\rvol F_j$.

    If either one of the limits do not exist
    (which happens only if there is only finitely many good $V_k$
    or finitely many not-so-good $V_k$, respectively),
    or both limits agree,
    then we know for sure the value of $\rvol F_j$.
    It might happen that both limit exists,
    but their values are different;
    this happens if there are infinitely many good and not-so-good $V_k$,
    but either of these have some associated ``garbage''.
    For good $V_k$,
    we know that there is a single discontinuity in $(\alpha_k, \alpha_k + \epsilon_k)$,
    but it might happen that this discontinuity
    is due to $F_j$ and some other facets $F_i$ for $i > j$.
    A similar problem happens with the not-so-good $V_k$.
    In this case,
    we will have ``dirty'' $V_k$,
    which will make the limits larger than $\rvol F_j$.

    However,
    property~\ref{item:no-other-discontinuities}
    does guarantee that
    there will be infinitely many ``clean'' $V_k$,
    so in the event that both limits exist and differ,
    the smallest value is the correct one
    (because that's where the infinitely many ``clean'' $V_k$ appeared).

    To recover $b_j$,
    we will handle these cases separately.

    Assume first that the ``infinitely many good $V_k$'' gave the correct value.
    Looking in the intervals $(\alpha_k, \alpha_k + \epsilon_k)$
    for which the corresponding $V_k$ approximate $k^{d-1} \rvol F_j$,
    we get an infinite number of $s_k \in (\alpha_k, \alpha_k + \epsilon_k)$
    which we know are discontinuities provoked by $F_j$.
    That is,
    there is a sequence of integers $m_k$ such that
    \begin{equation*}
        s_k (b_j + k \langle a_j, w_0 \rangle) = m_k
    \end{equation*}
    for all these $k$.
    By properties \ref{item:alpha_k-larger-than-k},
    \ref{item:alpha_k-tends-to-infinity} and~\ref{item:epsilon_k-tends-to-zero}
    of Lemma~\ref{thm:isolating-largest-vector},
    we have that $\{s_k\}$ approaches zero for these $k$,
    and thus we may apply Lemma~\ref{thm:pseudodiophantine-equations}
    to determine $b_j$ uniquely.

    Now assume that the correct limit is the ``infinitely many not-so-good $V_k$''.
    Let $s_k$ and $s'_k$ (with $s_k < s'_k$)
    be the two discontinuities which happen in $(\alpha_k, \alpha_k, \epsilon_k)$,
    when $V_k$ is not-so-good and approximates $2 \rvol F_j$.
    The main difficulty of this case is that,
    for each $k$,
    we know at least one of $s_k$ and $s'_k$
    is a point of discontinuity caused by $F_j$,
    but it might not be both.

    Call the case where both are discontinuities caused by $F_j$
    the \emph{nice} case.
    Since these two are consecutive discontinuities,
    we have
    \begin{equation*}
        s'_k - s_k = \frac{1}{b_j + k \langle a_j, w_0 \rangle};
    \end{equation*}
    rearranging this equation gives
    \begin{equation*}
        b_j = \frac{1}{s'_k - s_k} - k \langle a_j, w_0 \rangle.
    \end{equation*}

    In a non-nice case,
    the distance between $s'_k$ and $s_k$
    is smaller than in the nice case
    (because in any open interval
    with length larger than $\frac{1}{b_j + k \langle a_j, w_0 \rangle}$
    lies a point of discontinuity of $F_j$);
    that is,
    \begin{equation*}
        s'_k - s_k < \frac{1}{b_j + k \langle a_j, w_0 \rangle}.
    \end{equation*}
    If $k$ is large enough,
    this translates to
    \begin{equation*}
        b_j < \frac{1}{s'_k - s_k} - k \langle a_j, w_0 \rangle.
    \end{equation*}

    Due to properties \ref{item:a_1-provokes-discontinuities}
    and~\ref{item:no-other-discontinuities},
    together with the current assumption that
    there are only finitely many good $V_k$,
    we know that the nice case happens infinitely often;
    therefore,
    \begin{equation*}
        b_j = \limsup_{\substack{
            k \to \infty \\
            \text{$V_k$ not-so-good}
        }}
        \frac{1}{s'_k - s_k} - k \langle a_j, w_0 \rangle.
    \end{equation*}

    Therefore,
    if $F_j$ is a facet of $P$,
    both when there are infinitely many good $V_k$
    and when there are infinitely many not-so-good $V_k$
    we can compute $\rvol F_j$ and $b_j$.

    Now apply induction on $j$ to finish the proof.
\end{proof}

\begin{corollary}
    \label{thm:semirational-complete-invariant}
    Let $P$ and $Q$ be two full-dimensional semi-rational polytopes
    such that $L_{P + w}(s) = L_{Q + w}(s)$
    for all integer $w$
    and all real $s > 0$.
    Then $P = Q$.
\end{corollary}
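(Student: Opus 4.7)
The plan is to reduce the corollary directly to Theorem~\ref{thm:recovering-right-hand-sides}. That theorem already says that, once a list of primitive integer normals $a_1,\dots,a_n$ is fixed and the functions $f_w(s)$ are given, at most one tuple of right-hand sides $(b_1,\dots,b_n)$ yields a full-dimensional semi-rational polytope whose translates have Ehrhart functions $f_w$ and for which each $F_i$ is a face. So the only task is to describe $P$ and $Q$ using a \emph{common} list of normals, and then to observe that the hypotheses of the theorem are met by both descriptions.

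First, I would let $A_P$ and $A_Q$ denote the sets of primitive integer facet normals of $P$ and $Q$ respectively (both exist because $P$ and $Q$ are semi-rational and full-dimensional), and form the union $\{a_1,\dots,a_n\} = A_P \cup A_Q$. For each $i$, set
\begin{equation*}
    b_i^P = \max_{x \in P} \langle a_i, x \rangle
    \qquad \text{and} \qquad
    b_i^Q = \max_{x \in Q} \langle a_i, x \rangle.
\end{equation*}
Since $A_P$ already contains every facet normal of $P$, we still have $P = \bigcap_{i=1}^n \{x \mid \langle a_i, x \rangle \leq b_i^P\}$; for the ``extra'' vectors $a_i \in A_Q \setminus A_P$ the inequality is redundant, but because $b_i^P$ is defined as a maximum the hyperplane $\{\langle a_i,x\rangle = b_i^P\}$ still supports $P$, so the set $F_i^P = P \cap \{\langle a_i,x\rangle = b_i^P\}$ is a non-empty face (perhaps of dimension less than $d-1$). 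Symmetrically for $Q$ with the $b_i^Q$.

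Now both tuples $(b_1^P,\dots,b_n^P)$ and $(b_1^Q,\dots,b_n^Q)$ satisfy the hypotheses of Theorem~\ref{thm:recovering-right-hand-sides} with the common list $(a_1,\dots,a_n)$ and the common family $f_w(s) = L_{P+w}(s) = L_{Q+w}(s)$: each yields a full-dimensional semi-rational polytope, each reproduces the prescribed functions $f_w(s)$, and in each case every $F_i$ is a face of the corresponding polytope. The uniqueness conclusion of that theorem then forces $b_i^P = b_i^Q$ for every $i$, whence the hyperplane presentations of $P$ and $Q$ coincide and $P = Q$.

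There is not really a serious obstacle beyond the work already done in Theorem~\ref{thm:recovering-right-hand-sides}; the only delicate point is that the common list of normals will in general contain vectors that are facet normals of one polytope but not of the other. This is precisely why Theorem~\ref{thm:recovering-right-hand-sides} was stated with the hypothesis ``the polytopes $F_i$ are faces of $P$'' rather than the stronger ``facets of $P$'', so that we may freely adjoin extra normals without leaving the scope of the theorem.
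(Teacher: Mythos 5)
Your proposal is correct and follows essentially the same route as the paper: both take the union of the facet normals of $P$ and $Q$, extend each polytope's description with the extra (redundant) normals, and invoke the uniqueness of Theorem~\ref{thm:recovering-right-hand-sides}. Your version is in fact slightly more careful than the paper's, since you explicitly choose each $b_i$ as $\max_{x}\langle a_i,x\rangle$ so that the corresponding $F_i$ is a genuine supporting face, which is exactly the hypothesis the theorem requires.
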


In other words,
the functions $L_{P + w}(s)$ for all integer $w$
form a complete set of invariants
in the class of full-dimensional semi-rational polytopes.

\begin{proof}
    If $a$ is a primitive integer vector such that
    \begin{equation*}
        P \cap \{x \in \mathbb R^d \mid \langle a, x \rangle = b \}
    \end{equation*}
    is a facet of $P$,
    then,
    as $Q$ is a bounded set,
    for some appropriate $b'$ we have
    \begin{equation*}
        Q \subseteq \{x \in \mathbb R^d \mid \langle a, x \rangle \leq b' \}.
    \end{equation*}

    This means we can write
    \begin{align*}
        P &= \bigcap_{i=1}^n \{
                x \in \mathbb R^d \mid \langle a_i, x \rangle \leq b_i
            \} \\
        Q &= \bigcap_{i=1}^n \{
                x \in \mathbb R^d \mid \langle a_i, x \rangle \leq c_i
            \},
    \end{align*}
    where $a_1, \dots, a_n$ are primitive integer vectors,
    and $b_1, \dots, b_n, c_1, \dots, c_n$ are real numbers.

    That is,
    by possibly adding some redundant vectors,
    we can write $P$ and $Q$ as intersection of hyperplanes
    using the same set of normal vectors.
    Now just apply Theorem~\ref{thm:recovering-right-hand-sides}
    (assuming knowledge of the vectors $a_1, \dots, a_n$)
    to conclude that $P = Q$.
\end{proof}

\section{Codimension one polytopes}
\label{sec:codimension-one}

Corollary~\ref{thm:semirational-complete-invariant}
says that the functions $L_{P + w}(s)$ form a complete set of invariants
in the class of full-dimensional semi-rational polytopes.
There exists a simple example which shows that
full-dimensionality is needed:
consider again the affine space $M$
defined in Section~\ref{sec:translation-variant} by
\begin{equation*}
    M = \{(\ln 2, \ln 3)\} \times \mathbb R^{d-2}.
\end{equation*}

For any real $s > 0$ and any integer $w$,
the affine space $s(M + w)$ has no integer points,
and thus if $P$ and $Q$ are any polytopes which are contained in $M$
then $L_{P + w}(s) = L_{Q + w}(s) = 0$
for all $w$ and all $s > 0$.

$P$ and $Q$ may be chosen to be semi-rational in the example above,
so we know that
the analogue of Corollary~\ref{thm:semirational-complete-invariant}
for codimension $2$ semi-rational polytopes
is false.
This leaves open the possibility that
the analogue for semi-rational codimension $1$ polytopes,
or for rational polytopes of any dimension,
is true.
In this section
we will show that both these analogues are indeed true.

The general idea is to reduce to the full-dimensional case,
but we will need to do some adjustments.
For example,
if $P$ is a $(d-1)$-dimensional polytope
contained in $\mathbb R^{d-1} \times \{\frac 12\}$,
let $P' = P - (0, \dots, 0, \frac 12)$,
and $P''$ be the projection of $P'$ to $\mathbb R^{d-1}$.
The polytope $P''$ is,
indeed,
a full-dimensional polytope in $\mathbb R^{d-1}$,
and if $w = (w_1, \dots, w_{d-1})$ is an integer vector
we know that
\begin{equation*}
    L_{P' + w}(s) = L_{P'' + (w_1, \dots, w_{d-1}, 0)}(s)
\end{equation*}
for all $s$.
Therefore,
if we can compute $L_{P' + w}(s)$ for all $w$ whose last coordinate is zero,
we may use Corollary~\ref{thm:semirational-complete-invariant}
for $P''$,
and conclude $P''$ is uniquely identified.
(We'll see later how to distinguish between two translates of the same polytope.)

The problem is that,
just by using $L_{P + w}(s)$,
we cannot compute $L_{P' + w}(s)$ for all $s$.
Let $w = (w_1, \dots, w_d)$ be an integer vector.
Then
\begin{equation*}
    P + w \subseteq \mathbb R^{d-1} \times \{w_d + \tfrac 12\},
\end{equation*}
so $L_{P + w}(s)$ will be nonzero only for $s$ of the form $\frac{m}{w_d + \frac12}$
for some integer $m$.
In this case,
we have
\begin{equation*}
    L_{P + w}(s) = L_{P' + w'}(s),
\end{equation*}
for $w' = (w_1, \dots, w_{d-1}, 0)$.

Thus,
we may compute $L_{P'' + w}(s)$
only for $s$ of the form $\frac{2m}{2 w_d + 1}$;
that is,
instead of knowing the value of $L_{P'' + w}(s)$ for all $s$,
we know it just for a dense subset of $\mathbb R$.

Since each $L_P(s)$ is piecewise constant,
this is still enough information to compute the one-sided limits
$L_P(s^+)$ and $L_P(s^-)$
for all $s > 0$,
and as each $L_P(s)$ is lower semicontinuous,
we can fully reconstruct most of its discontinuities.
In particular,
if $s_0$ is either a right-discontinuity or a left-discontinuity,
but no both,
of $L_P(s)$,
we know that $L_P(s_0)$ is the largest of $L_P(s_0^-)$ and $L_P(s_0^+)$.
However,
this is not enough to recover $L_P(s_0)$
if $s_0$ is both a left- and right-discontinuity;
this happens,
for example,
at $s_0 = 3$ with the square of Figure~\ref{fig:ehrhart-function-small-square}.

In order to use Corollary~\ref{thm:semirational-complete-invariant},
we will strengthen its proof
to work with knowledge of $L_{P + w}(s)$ only for densely many $s$.
More specifically,
we will modify Lemma~\ref{thm:isolating-largest-vector}
so that the choice of $w_0$ avoids these overlapping discontinuities,
at least in the window $(\alpha_k, \alpha_k + \epsilon_k)$
in which we analyze $L_{P + kw_0}(s)$.

\subsection{Avoiding overlapping discontinuities}

Again,
write $P$ as
\begin{equation*}
    P = \bigcap_{i=1}^n \{ x \in \mathbb R^d \mid \langle a_i, x \rangle \leq b_i \},
\end{equation*}
and let $F_i$ be the corresponding facets of $P$.

The facet $F_i + w_k$ may only trigger a discontinuity at $s$ if
\begin{equation*}
    s = \frac{m}{b_i + \langle a_i, w_k \rangle}
\end{equation*}
for some integer $m$.
Therefore,
if $F_i + w_k$ and $F_j + w_k$ both trigger a discontinuity at $s$,
then we must have,
for integers $m_i$, $m_j$,
\begin{equation*}
    s = \frac{m_i}{b_i + \langle a_i, w_k \rangle}
      = \frac{m_j}{b_j + \langle a_j, w_k \rangle}.
\end{equation*}

If we divide both $m_i$ and $m_j$ by their greatest common divisor,
we obtain a ``primitive'' number $s$
such that all other simultaneous discontinuity points of $F_i + w_k$ and $F_j + w_k$
are integer multiples of $s$.
So,
assume $m_i$ and $m_j$ are relatively prime.

From this equation,
it is clear that if both $F_i + w_k$ and $F_j + w_k$ trigger a discontinuity,
we must either have both $b_i$ and $b_j$ rational,
or both irrational.
We will first deal with the irrational case,
which is easier.

Assume for now that $b_i$ and $b_j$ are both irrational.
We may rewrite the above equation as
\begin{equation*}
    b_j = \frac{m_i}{m_j} b_i +
        \frac{m_i}{m_j} \langle a_j, w_k \rangle - \langle a_i, w_k \rangle.
\end{equation*}

If there is a $k' \neq k$
for which $F_i + w_{k'}$ and $F_j + w_{k'}$ also have overlapping discontinuities,
then there is also two coprime integers $m'_i, m'_j$ such that
\begin{equation*}
    b_j = \frac{m'_i}{m'_j} b_i +
        \frac{m'_i}{m'_j} \langle a_j, w_{k'} \rangle - \langle a_i, w_{k'} \rangle.
\end{equation*}

In both cases,
we wrote $b_j$ as a rational combination of $b_i$ and $1$.
Thinking of $\mathbb R$ as a vector space over $\mathbb Q$,
we know $b_i$ and $1$ are linearly independent,
and thus this representation of $b_j$ is unique.
Therefore,
\begin{equation*}
    \frac{m'_i}{m'_j} = \frac{m_i}{m_j}
    \qquad \text{and} \qquad
    \frac{m_i}{m_j} \langle a_j, w_k \rangle - \langle a_i, w_k \rangle =
    \frac{m'_i}{m'_j} \langle a_j, w_{k'} \rangle - \langle a_i, w_{k'} \rangle.
\end{equation*}

Since the pairs $(m_i, m_j)$ and $(m'_i, m'_j)$ are of coprime numbers,
the first of these two equations give
$m_i = \pm m'_i$ and $m_j = \pm m'_j$;
we may assume $m_i = m'_i$ and $m_j = m'_j$.
Using these identities
and expanding $w_k = k w_0$,
the second equation may be rearranged to
\begin{equation*}
    k \langle m_i a_j - m_j a_i, w_0 \rangle = k' \langle m_i a_j - m_j a_i, w_0 \rangle.
\end{equation*}

Since we assumed that $k' \neq k$,
we must have
\begin{equation*}
    \langle m_i a_j - m_j a_i, w_0 \rangle = 0,
\end{equation*}
which means that $w_0$ is orthogonal to $m_i a_j - m_j a_i$.

Therefore,
if we guarantee that $w_0$ is not orthogonal to $m_i a_j - m_j a_i$,
we are sure that $F_i + w_k$ and $F_j + w_k$ will have overlapping discontinuities
for at most one $k$.

Ideally,
we would like to make $w_0$ non-orthogonal to $m_i a_j - m_j a_i$
for all possible choices of $m_i, m_j$ and all $a_i, a_j$.
That would add an infinite number of restrictions on $w_0$,
which might make the choice of $w_0$ impossible
(for example, if $a_1 = (1, 0)$ and $a_2 = (0, 1)$,
then the only possible choice would be $w_0 = 0$).

Fortunately,
there is at most one problematic $m_i$ and $m_j$ which must be avoided
for each pair of irrationals $b_i$ and $b_j$.
So,
for convenience,
call the ``dependence index''
between any two irrational numbers $b_i$ and $b_j$
to be $\max\{|m_i|, |m_j|\}$,
where $m_i$ and $m_j$ are coprime numbers such that $b_i = \frac{m_i}{m_j} b_j + r$
for some rational number $r$.
(This is well-defined because if $m_i$ and $m_j$ exist,
then they are unique,
up to signs.)
If no such integers $m_i$ and $m_j$ exist,
let the dependence index be zero.

If $N$ is larger than the dependence index
between any two irrational numbers in $\{b_1, \dots, b_n\}$,
making $w_0$ not orthogonal to any vector of the form
$m_i a_j - m_j a_i$ with $|m_i|, |m_j| \leq N$ guarantees that
the discontinuities of $F_i + w_k$ and $F_j + w_k$
will overlap for at most one $k$.

This solves the irrational clashes,
so now assume $b_i$ and $b_j$ are rational.

Here,
clashes are unavoidable.
The goal is to make them happen outside $(\alpha_k, \alpha_k + \epsilon_k)$.
This will be accomplished by showing that,
if $s$ is a simultaneous discontinuity point for any $k$,
then the denominator of $s$ is bounded.
The interval $(\alpha_k, \alpha_k + \epsilon_k)$
contains points of the form $k + \delta$
for small, positive $\delta$,
and as $k$ gets large,
$\delta$ gets small.
This will guarantee no discontinuity clashes happen inside this interval.

Write $b_i = \frac{p_i}{q_i}$.
The ``primitive clash equation'' reads
\begin{equation*}
    s = \frac{q_i m_i}{p_i + q_i \langle a_i, w_k \rangle}
      = \frac{q_j m_j}{p_j + q_j \langle a_j, w_k \rangle},
\end{equation*}
which may be rewritten as
\begin{equation*}
    m_i q_i (p_j + q_j \langle a_j, w_0 \rangle k) =
    m_j q_j (p_i + q_i \langle a_i, w_0 \rangle k).
\end{equation*}

This is an equation of the form $\zeta m_i = \xi m_j$,
where $\zeta = q_i p_j + q_i q_j \langle a_j, w_0 \rangle k$
and $\xi = q_j p_i + q_j q_i \langle a_i, w_0 \rangle k$.
All solutions are of the form
\begin{equation*}
    (m_i, m_j) = \left( \frac{\xi}{\gcd(\zeta, \xi)} l,
                        \frac{\zeta}{\gcd(\zeta, \xi)} l \right),
\end{equation*}
for some integer $l$.
Since we are looking for the ``primitive'' solution,
we'll take $l = 1$.

The following lemma guarantees that
the denominator of the primitive solution is bounded,
as a function of $k$.

\begin{lemma}
    \label{thm:gcd-arithmetic-sequence}
    Let $\zeta, \xi, \gamma, \eta$ be integers
    such that the vectors $(\zeta, \gamma)$ and $(\xi, \eta)$
    are linearly independent.
    Then the sequence $\{\gcd(\zeta k + \gamma, \xi k + \eta)\}_{k = 1}^{\infty}$
    is periodic.
\end{lemma}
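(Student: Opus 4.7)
The plan is to show that $d_k := \gcd(\zeta k + \gamma, \xi k + \eta)$ takes only finitely many values, and that which value it takes depends only on $k$ modulo a fixed integer.

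First I would eliminate $k$ by combining the two expressions:
\begin{equation*}
    \xi(\zeta k + \gamma) - \zeta(\xi k + \eta) = \xi\gamma - \zeta\eta.
\end{equation*}
The linear independence of $(\zeta, \gamma)$ and $(\xi, \eta)$ is exactly the statement that $D := \zeta\eta - \xi\gamma$ is a nonzero integer. Since $d_k$ divides both $\zeta k + \gamma$ and $\xi k + \eta$, the displayed identity forces $d_k \mid D$. In particular, $d_k$ is bounded and takes only finitely many positive values.

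Next I would argue that $d_k$ is a function of $k \bmod |D|$. For any fixed divisor $d$ of $D$, the conjunction
\begin{equation*}
    \zeta k + \gamma \equiv 0 \pmod{d}
    \quad\text{and}\quad
    \xi k + \eta \equiv 0 \pmod{d}
\end{equation*}
depends only on the residue class of $k$ modulo $d$, and therefore only on $k$ modulo $|D|$ (since $d \mid D$). Because $d_k$ equals the largest $d \mid D$ for which this conjunction holds, the value of $d_k$ likewise depends only on $k \bmod |D|$. Consequently $d_{k + |D|} = d_k$ for every $k$, so the sequence is periodic with period dividing $|D|$.

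There is no real obstacle here; the only care required is checking that the linear independence hypothesis is used in precisely the right place, namely to guarantee that the constant $D$ produced by the elimination step is nonzero (otherwise one could not bound the $d_k$ and periodicity could genuinely fail, as with $\gcd(k, 2k)$).
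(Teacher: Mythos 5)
Your proof is correct, but it proceeds differently from the paper's. You eliminate $k$ via the determinant identity $\xi(\zeta k + \gamma) - \zeta(\xi k + \eta) = \xi\gamma - \zeta\eta$, observe that linear independence is exactly the nonvanishing of $D = \zeta\eta - \xi\gamma$, conclude $d_k \mid D$, and then note that for each divisor $d$ of $D$ the divisibility conditions depend only on $k \bmod d$, hence $d_k$ is determined by $k \bmod |D|$. The paper instead runs a subtraction-based Euclidean algorithm on the pair of linear forms: after normalizing signs it replaces $\gcd(\zeta k + \gamma,\, \xi k + \eta)$ by $\gcd(\xi k + \eta,\, (\zeta-\xi)k + \gamma - \eta)$ and inducts on $\zeta + \xi$, with the base case $\xi = 0$ handled by reducing modulo $\eta$ (which is nonzero by linear independence). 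Your argument buys an explicit period --- the sequence is periodic with period dividing $|D|$ --- and uses the independence hypothesis in a single transparent place; the paper's induction establishes periodicity without exhibiting a period, and has to track (implicitly) that linear independence, i.e.\ the determinant, is preserved up to sign at each reduction step. The one point worth making explicit in your write-up is that $d_k$ is indeed ``the largest $d \mid D$ for which the conjunction holds'': this follows because every common divisor of the two values divides their gcd, and the gcd itself divides $D$. (The degenerate case where both linear forms vanish at some $k$ cannot occur, since that would make $(\zeta,\gamma)$ and $(\xi,\eta)$ both orthogonal to $(k,1)$ and hence dependent.)
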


\begin{proof}
    By swapping $(\zeta, \gamma)$ and $(\xi, \eta)$
    and multiplying them by $-1$,
    if needed,
    we may assume that $\zeta \geq \xi \geq 0$.

    If $\xi = 0$,
    then the sequence has only terms of the form
    \begin{equation*}
        \gcd( \zeta k + \gamma, \eta ).
    \end{equation*}

    Since $\eta \neq 0$
    (due to the linear independence restriction),
    we have
    \begin{equation*}
        \gcd( \zeta k + \gamma, \eta ) =
        \gcd(\eta, (\zeta k + \gamma) \bmod \eta),
    \end{equation*}
    from which periodicity is clear.

    If $\zeta, \xi > 0$,
    we have
    \begin{equation*}
        \gcd(\zeta k + \gamma, \xi k + \eta) =
        \gcd(\xi k + \eta, (\zeta - \xi)k + \gamma - \eta),
    \end{equation*}
    so if we let $\zeta' = \xi$,
    $\xi' = \zeta - \xi$,
    $\gamma' = \eta$,
    and $\eta' = \gamma - \eta$,
    then $\zeta', \xi' \geq 0$ and $\zeta' + \xi' < \zeta + \xi$,
    so we may apply induction in $\zeta + \xi$
    to conclude that the sequence is periodic.
\end{proof}

In our case,
we have $\zeta = q_i q_j \langle a_i, w_0 \rangle$,
$\xi = q_i q_j \langle a_i, w_0 \rangle$,
$\gamma = q_j p_i$ and $\eta = q_i p_j$.
For the vectors $(\zeta, \gamma)$ and $(\xi, \eta)$ to be linearly dependent,
there must exist some rational number $r$ such that
$r\zeta = \xi$ and $r \gamma = \eta$.
That is,
\begin{equation*}
    \frac{q_i q_j \langle a_i, w_0 \rangle} {q_i q_j \langle a_j, w_0 \rangle}
    = \frac \xi \zeta = r = \frac \eta \gamma =
    \frac{q_i p_j}{q_j p_i}
\end{equation*}

Rearranging the equation gives
\begin{equation*}
    \langle w_0, p_j q_i a_i - p_i q_j a_j \rangle = 0.
\end{equation*}
Therefore,
if we can guarantee that $w_0$ is not orthogonal to
$p_j q_i a_j - p_i q_j a_j$,
for all $i$ and $j$
(with $i \neq j$),
we will make sure that
\begin{equation*}
    \frac{m_i}{p_i + q_i \langle a_i, w_k \rangle}
\end{equation*}
will have only finitely many distinct values.
As $s$ is a multiple of this value,
this gives a bound on the denominator of $s$;
so,
for large enough $k$,
no such $s$ will appear in $(\alpha_k, \alpha_k + \epsilon_k)$.

We may use the same trick that deals with the irrational case:
we will add a parameter $N$ to the lemma,
and make $w_0$ orthogonal to all $\tau_j a_j - \tau_i a_i$
for any pair $\tau_i, \tau_j$ with $|\tau_i|, |\tau_j| \leq N^2$.

The modified lemma is the following.

\begin{lemma}
    \label{thm:densely-isolating-largest-vector}
    Let $N \geq 0$ be some integer number,
    and let $a_1, \dots, a_n$ be primitive integer vectors in $\mathbb R^d$,
    with $\norm{a_1} \geq \norm{a_i}$ for all $i$.
    Then there is an integer vector $w_0$
    and a sequence $(\alpha_k, \alpha_k + \epsilon_k)$ of intervals
    such that,
    for all possible choice of real numbers $b_1, \dots, b_n$,
    all the properties enumerated in Lemma~\ref{thm:isolating-largest-vector}
    are true,
    and also the following one:
    \begin{enumerate}[technical-lemma]
        \item \label{item:no-discontinuity-clash}
            If the numerator and the denominator of all rational $b_i$
            is smaller than or equal to $N$,
            and the ``dependence index'' between any two irrational $b_i$
            is smaller than or equal to $N$,
            then for all sufficiently large $k$
            there will be no $s \in (\alpha_k, \alpha_k + \epsilon_k)$
    \end{enumerate}
\end{lemma}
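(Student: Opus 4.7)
The plan is to begin from the construction of $w_0$ already used in the proof of Lemma~\ref{thm:isolating-largest-vector} and to augment it with a finite list of additional non-orthogonality constraints sufficient to rule out simultaneous discontinuities. Specifically, I take $w_0 = \tau a_1 + w'$, where $\tau$ is a large integer (chosen as in the original proof) and $w' \in \{a_1\}^{\perp}$ is an integer vector avoiding not only the original finitely many orthogonality conditions, but also orthogonality to each of the vectors $\mu a_i - \nu a_j$ for all pairs $i \neq j$ and integers $\mu, \nu$ with $|\mu|, |\nu| \leq N^2$, discarding any such vector which is zero or parallel to $a_1$. Each additional non-orthogonality is a codimension-one restriction inside $\{a_1\}^{\perp}$, and there are only finitely many of them, so an integer $w'$ satisfying all restrictions exists; the original properties of Lemma~\ref{thm:isolating-largest-vector} are inherited unchanged.

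Next I handle the two kinds of discontinuity clashes separately, following the analysis preceding the lemma statement. In the irrational case, for two facets with $b_i, b_j$ irrational and dependence index at most $N$, the discussion shows that simultaneous discontinuities at two distinct values of $k$ would force $w_0$ to be orthogonal to $m_j a_i - m_i a_j$ for some coprime pair with $|m_i|, |m_j| \leq N$; since $N \leq N^2$, our construction explicitly forbids this. Therefore such a pair $(i,j)$ can produce a simultaneous discontinuity for at most one value of $k$, and the finite list of exceptional $k$'s can be discarded. In the rational case, writing $b_i = p_i/q_i$ and $b_j = p_j/q_j$ in lowest terms with numerators and denominators bounded by $N$, the clash equation takes the form $\zeta m_i = \xi m_j$ with $\zeta$ and $\xi$ affine in $k$. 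The non-orthogonality of $w_0$ to $p_j q_i a_i - p_i q_j a_j$, a vector in our forbidden list because $|p_j q_i|, |p_i q_j| \leq N^2$, is precisely the linear independence hypothesis required by Lemma~\ref{thm:gcd-arithmetic-sequence}; therefore $\gcd(\zeta, \xi)$ is a periodic, and hence bounded, sequence in $k$. This gives a uniform bound $D$ on the denominators of all primitive clash points $s$.

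Since every clash point has denominator at most $D$, its fractional part $\{s\}$ is either zero or at least $1/D$. On the other hand, properties \ref{item:alpha_k-larger-than-k}, \ref{item:alpha_k-tends-to-infinity}, and \ref{item:epsilon_k-tends-to-zero} force every $s \in (\alpha_k, \alpha_k + \epsilon_k)$ to be of the form $k + \delta$ with $\delta$ positive and tending to zero with $k$, so for all sufficiently large $k$ these clash points are excluded from the window. Combined with the finitely many exceptional $k$ discarded in the irrational case, this yields property~\ref{item:no-discontinuity-clash}. The main obstacle is the bookkeeping in the rational case: one must apply Lemma~\ref{thm:gcd-arithmetic-sequence} uniformly over all $O(n^2)$ pairs of facets and integrate the resulting denominator bounds into a single threshold on $k$, which works because the list of pairs is finite and each yields a bound depending only on $N$ and the $a_i$.
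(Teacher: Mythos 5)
Your proposal is correct and follows essentially the same route as the paper: you enlarge the finite non-orthogonality list in the construction of $w_0$ to include the vectors $\tau_i a_j - \tau_j a_i$ with $|\tau_i|,|\tau_j|\leq N^2$, and then derive property~\ref{item:no-discontinuity-clash} from the irrational/rational clash analysis together with Lemma~\ref{thm:gcd-arithmetic-sequence}. The only difference is that you spell out in the proof itself the bookkeeping that the paper delegates to the discussion preceding the lemma.
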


\begin{proof}
    In the choice of the vector $w_0$,
    in the beginning of the lemma,
    there was already a (finite) list of integer vectors
    such that $w_0$ was made non-orthogonal
    (namely, all vectors $a_i$ for $i = 1, \dots, n$).
    Now,
    add to that list all vectors of the form
    \begin{equation*}
        \tau_i a_j - \tau_j a_i,
    \end{equation*}
    for all pairs of integers $\tau_i, \tau_j$ such that $|\tau_i|, |\tau_j| \leq N^2$.

    Primitiveness of the vectors $a_i$ guarantee that
    none of these vectors is the zero vector,
    and as we are limiting the coefficients by $N^2$
    we still have a finite list.

    Now choose $w_0$, $\alpha_k$ and $\epsilon_k$
    in the same way as in the proof of Lemma~\ref{thm:isolating-largest-vector}
    except that, now,
    the list of vectors not orthogonal to $w_0$ is larger.
    This will guarantee all the properties of that lemma.

    Finally,
    property~\ref{item:no-discontinuity-clash}
    follows from the discussion above.
\end{proof}

\subsection{Piecing together the semi-rational case, but avoiding discontinuity clashes}
\label{sec:piecing-together-avoiding-discontinuities}

The improved Lemma~\ref{thm:densely-isolating-largest-vector}
guarantees that,
for large enough $k$,
there will not be discontinuities overlapping in $(\alpha_k, \alpha_k + \epsilon_k)$,
and thus the other lemmas may be used.
So we can prove the following improved version
of Theorem~\ref{thm:recovering-right-hand-sides}.

\begin{theorem}
    \label{thm:recovering-right-hand-sides-dense-information}
    Let $a_1, \dots, a_n \in \mathbb R^d$ be primitive integer vectors
    and for each integer $w$ let $f_w(s)$ be a function on $\mathbb R$.
    Then there is at most one set of numbers $b_1, \dots, b_n$ such that
    \begin{equation*}
        P = \bigcap_{i=1}^n \{x \in \mathbb R^d \mid \langle a_i, x \rangle \leq b_i\}
    \end{equation*}
    is a full-dimensional semi-rational polytope,
    that $f_w(s) = L_{P + w}(s)$ for all integer $w$
    and all $s$ in a dense subset of $\mathbb R$,
    and that the polytopes $F_i$ defined by
    \begin{equation*}
        F_i = P \cap \{x \in \mathbb R^d \mid \langle a_i, x \rangle = b_i\}
    \end{equation*}
    are faces of $P$.
\end{theorem}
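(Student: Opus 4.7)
The plan is to mimic the inductive scheme of the proof of Theorem~\ref{thm:recovering-right-hand-sides}, but to substitute Lemma~\ref{thm:densely-isolating-largest-vector} for Lemma~\ref{thm:isolating-largest-vector}. The only genuinely new work is to verify that dense knowledge of $f_w = L_{P+w}$ still lets us read off the discontinuity magnitudes that the original proof consumes.

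First I would note that $L_{P+w}(s)$ is a nonnegative integer-valued step function, so both one-sided limits $L_{P+w}(s_0^-)$ and $L_{P+w}(s_0^+)$ can be recovered at every $s_0$ by taking limits of $f_w$ along the dense set. By Lemma~\ref{thm:jump-magnitudes}, at a discontinuity $s_0$ the left-jump $L_{P+w}(s_0) - L_{P+w}(s_0^-)$ counts integer points in front facets of $s_0(P + w)$, and the right-jump $L_{P+w}(s_0) - L_{P+w}(s_0^+)$ counts integer points in back facets. If $s_0$ is a pure left- or pure right-discontinuity, then $L_{P+w}(s_0)$ equals whichever one-sided limit is larger and the jump magnitude is $|L_{P+w}(s_0^+) - L_{P+w}(s_0^-)|$, determined by the dense data alone. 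The obstruction is when $s_0$ is simultaneously a left- and right-discontinuity: then $L_{P+w}(s_0)$ strictly exceeds both one-sided limits, so the two individual jumps are not separately recoverable, only their difference is.

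Next I would run the induction of Theorem~\ref{thm:recovering-right-hand-sides} almost verbatim. Order $a_1, \ldots, a_n$ by decreasing norm; at step $j$, assuming $b_i$ and $\rvol F_i$ are known for $i < j$, form the cleaned function $g_w$ and invoke Lemma~\ref{thm:densely-isolating-largest-vector} with a parameter $N$ large enough to exceed the numerators and denominators of every rational $b_i$ and the dependence index between every pair of irrational $b_i$. Such a finite $N$ exists for any fixed hypothetical solution, and the resulting $w_0, \alpha_k, \epsilon_k$ satisfy property~\ref{item:no-discontinuity-clash}, so for all sufficiently large $k$ no two distinct facets of $P + k w_0$ jointly trigger a discontinuity anywhere in $(\alpha_k, \alpha_k + \epsilon_k)$. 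Consequently each discontinuity in this window is purely one-sided, its magnitude is visible in the dense data as $|L_{P + kw_0}(s^+) - L_{P + kw_0}(s^-)|$, and the good/not-so-good case analysis together with Lemma~\ref{thm:pseudodiophantine-equations} proceeds exactly as before to decide whether $F_j$ is a facet and, if so, to compute $\rvol F_j$ and $b_j$. Uniqueness of the $b_i$ follows because the recovery procedure depends only on $f_w$.

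The main obstacle is precisely the dual-discontinuity case identified above; it is the sole reason the original proof does not apply directly, and it is exactly what Lemma~\ref{thm:densely-isolating-largest-vector} was designed to eliminate inside the analysis windows. A minor subtlety is that the threshold $N$ depends on the unknown $b_i$, but since there are only finitely many $b_i$ to accommodate, any hypothetical solution admits a finite $N$ making the recovery succeed and return that solution, which forces uniqueness among all admissible solutions.
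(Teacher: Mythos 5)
Your proposal is correct and follows essentially the same route as the paper: choose $N$ to dominate the numerators/denominators of the rational $b_i$ and the dependence indices of the irrational pairs, invoke Lemma~\ref{thm:densely-isolating-largest-vector} so that property~\ref{item:no-discontinuity-clash} rules out overlapping discontinuities in the windows $(\alpha_k, \alpha_k+\epsilon_k)$, and then rerun the induction of Theorem~\ref{thm:recovering-right-hand-sides} unchanged. Your added observation that dense data recovers both one-sided limits (hence the jump magnitude at any purely one-sided discontinuity) is exactly the point the paper makes in the lead-in to Section~\ref{sec:codimension-one}.
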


\begin{proof}
    Write $b_i = \frac{p_i}{q_i}$ whenever $b_i$ is rational,
    and let $N$ be so large that $N \geq |p_i|$ and $N \geq |q_i|$,
    and that $N$ is larger than the dependence index
    between any two irrational numbers in $\{b_1, \dots, b_n\}$.
    Use Lemma~\ref{thm:densely-isolating-largest-vector} with this $N$,
    and property~\ref{item:no-discontinuity-clash}
    will guarantee that,
    for all large enough $k$,
    we still know precisely where the discontinuities of $L_{P + w}(s)$ happen
    and what are their magnitudes.

    Thus the remainder of the proof is identical.
\end{proof}

\begin{corollary}
    \label{thm:semirational-dense-information-complete-invariant}
    Let $P$ and $Q$ be two full-dimensional semi-rational polytopes
    such that $L_{P + w}(s) = L_{Q + w}(s)$
    for all integer $w$
    and all real $s > 0$ in a dense subset of $\mathbb R$.
    Then $P = Q$.
\end{corollary}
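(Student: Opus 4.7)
The plan is to mirror almost verbatim the proof of Corollary~\ref{thm:semirational-complete-invariant}, with the sole modification of invoking the stronger Theorem~\ref{thm:recovering-right-hand-sides-dense-information} in place of Theorem~\ref{thm:recovering-right-hand-sides}. Since the former has already been established and its hypothesis on $f_w$ only requires agreement on a dense subset of $\mathbb R$, the present corollary should follow by a straightforward reduction to a uniqueness statement about the right-hand sides.

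First, I would assemble a common list of normal vectors. Let $a_1, \dots, a_n$ be the primitive integer vectors obtained by collecting all facet normals of $P$ together with all facet normals of $Q$. Since both $P$ and $Q$ are bounded, for each $i$ the quantities
\begin{equation*}
    b_i = \max_{x \in P} \langle a_i, x \rangle
    \qquad \text{and} \qquad
    c_i = \max_{x \in Q} \langle a_i, x \rangle
\end{equation*}
are well-defined real numbers. Then both polytopes admit presentations
\begin{equation*}
    P = \bigcap_{i=1}^n \{ x \in \mathbb R^d \mid \langle a_i, x \rangle \leq b_i \},
    \qquad
    Q = \bigcap_{i=1}^n \{ x \in \mathbb R^d \mid \langle a_i, x \rangle \leq c_i \},
\end{equation*}
and with this choice of right-hand sides, the sets $F_i = P \cap \{\langle a_i, x\rangle = b_i\}$ and $G_i = Q \cap \{\langle a_i, x\rangle = c_i\}$ are genuine faces of $P$ and $Q$ respectively, even for those indices $i$ at which one of the inequalities was originally redundant (in that case the resulting face is lower-dimensional, possibly empty).

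Now define $f_w(s) = L_{P+w}(s)$ for each integer vector $w$. By hypothesis, $f_w(s) = L_{Q+w}(s)$ holds for all $s$ in a dense subset of $\mathbb R$, which is exactly the setting of Theorem~\ref{thm:recovering-right-hand-sides-dense-information}. Applying that theorem to the common list $a_1, \dots, a_n$ and to $\{f_w\}$ yields that there is at most one tuple of right-hand sides compatible with this data and with the faces requirement; since $(b_1, \dots, b_n)$ and $(c_1, \dots, c_n)$ both satisfy all the hypotheses, they must coincide, and therefore $P = Q$.

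There is no real obstacle here, as all the genuine difficulty has already been absorbed into Theorem~\ref{thm:recovering-right-hand-sides-dense-information}. The only point requiring a modicum of care is the verification that the $F_i$ and $G_i$ really are faces of their respective polytopes even when $a_i$ is redundant in one of the descriptions; choosing the right-hand sides as support maxima, as above, makes this automatic.
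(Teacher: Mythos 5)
Your proposal is correct and follows essentially the same route as the paper, which simply declares this corollary ``analogous to the proof of Corollary~\ref{thm:semirational-complete-invariant}'': build a common list of facet normals for $P$ and $Q$, take right-hand sides as support maxima so the corresponding $F_i$ and $G_i$ are genuine (possibly lower-dimensional) faces, and invoke the dense-information version of the reconstruction theorem. Your write-up is in fact slightly more explicit than the paper's about why the redundant inequalities still define faces.
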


\begin{proof}
    Analogous to the proof of \ref{thm:semirational-complete-invariant}.
\end{proof}

\subsection{Semi-rational polytopes with codimension $0$ and $1$}

Now we may show Corollary~\ref{thm:semirational-complete-invariant}
for semi-rational polytopes
which have codimension $0$ and $1$.

\begin{restatetheorem}{thm:codimension-zero-and-one}
    Let $P$ and $Q$ be two semi-rational polytopes in $\mathbb R^d$,
    both having codimension $0$ or $1$.
    Suppose moreover that $L_{P + w}(s) = L_{Q + w}(s)$
    for all integer $w$ and all real $s > 0$.
    Then $P = Q$.
\end{restatetheorem}

\begin{proof}
    If their codimensions do not match,
    then $L_P(s)$ and $L_Q(s)$ will be different,
    so we may assume either both have codimension $0$
    or both have codimension $1$.
    In the first case,
    $P$ and $Q$ are full-dimensional,
    so we may use Corollary~\ref{thm:semirational-dense-information-complete-invariant}
    directly.
    Thus,
    assume both $P$ and $Q$ have codimension $1$.

    Let $H$, $H'$ satisfy
    \begin{align*}
        P \subseteq H &= \{x \in \mathbb R^d \mid \langle a, x \rangle = b\} \\
        Q \subseteq H' &= \{x \in \mathbb R^d \mid \langle a', x \rangle = b'\}.
    \end{align*}

    If we had $a \neq a'$,
    we could choose some vector $w$ which is orthogonal to $a$ but not to $a'$,
    and then $\vol \ppyr(Q + kw)$ would increase for large $k$
    (because the height of the pseudopyramid would increase,
    whereas the area of the base do not change)
    but $\vol \ppyr(P + kw)$ would stay the same
    (because neither the height nor the area of the base would change).
    Since $\vol \ppyr(P + kw)$ is determined by $L_{P + kw}(s)$
    (by Lemma~\ref{thm:different-pseudopyramid-volumes}),
    we know this cannot happen.

    This shows $a = a'$,
    and using Lemma~\ref{thm:limit-is-relative-volume}
    both $L_P(s)$ and of $L_Q(s)$ must exhibit discontinuities
    for all large enough $s$,
    which shows $b = b'$.
    Thus, $H = H'$.

    Now using unimodular transforms we may assume that
    \begin{equation*}
        H = \{x \in \mathbb R^d \mid x_d = b \}.
    \end{equation*}

    We have $P - (0, \dots, 0, b) \in \mathbb R^{d-1} \times \{0\}$,
    and analogously for $Q$,
    so we may define $P' \subseteq \mathbb R^{d-1}$
    to be the projection of $P - (0, \dots, 0, b)$ to $\mathbb R^{d-1}$,
    and analogously for $Q$.
    We will show that $P' = Q'$,
    which implies $P = Q$.

    Let $w' = (w_1, \dots, w_{d-1})$ be given.
    If $s$ is of the form
    \begin{equation*}
        s = \frac{m}{b + w_d}
    \end{equation*}
    for some integers $m$ and $w_d$,
    let $w = (w_1, \dots, w_d)$ and then
    \begin{align*}
        L_{P + w}(s) &= \#( s(P + w) \cap \mathbb Z^d ) \\
            &= \#\big( (s(P + w) - (0, \dots, 0, m)) \cap \mathbb Z^d \big) \\
            &= \#(s(P' + w') \cap \mathbb Z^{d-1}) \\
            &= L_{P' + w'}(s).
    \end{align*}

    Analogously, we have $L_{Q + w}(s) = L_{Q' + w'}(s)$.

    This shows that $L_{P' + w'}(s) = L_{Q' + w'}(s)$
    for all integer $w' \in \mathbb R^{d-1}$
    and all $s > 0$ of the form $\frac{m}{b + w_d}$,
    which form a dense subset of $\mathbb R$.
    Therefore,
    $P'$ and $Q'$ satisfy the hypothesis
    of Corollary~\ref{thm:semirational-dense-information-complete-invariant},
    and thus $P' = Q'$,
    which shows $P = Q$.
\end{proof}

\subsection{All the way down with the rationals}

As a last bonus
from Corollary~\ref{thm:semirational-dense-information-complete-invariant},
we may extend Theorem~\ref{thm:codimension-zero-and-one}
for all dimensions,
if we restrict ourselves to rational polytopes.

\begin{restatetheorem}{thm:rational-complete-invariant}
    Let $P$ and $Q$ be two rational polytopes in $\mathbb R^d$.
    Suppose that $L_{P + w}(s) = L_{Q + w}(s)$
    for all integer $w$ and all real $s > 0$.
    Then $P = Q$.
\end{restatetheorem}

\begin{proof}
    Measuring rate of growth of $L_P(s)$ and $L_Q(s)$
    gives the dimension of both polytopes,
    so we may assume both have the same dimension.
    If they have codimension $0$ or $1$,
    then we may apply Theorem~\ref{thm:codimension-zero-and-one} directly.
    So,
    assume their dimension is smaller than $d-2$.

    Let $M = \aff P$,
    and let $H$ be the translate of $M$ which passes through the origin.
    Define $M'$ and $H'$ analogously for $Q$.
    If $H \neq H'$,
    let $w \in H \setminus H'$;
    then the relative volume of $\ppyr(P + kw)$ will stay the same,
    whereas the relative volume of $\ppyr(Q + kw)$ will increase for large $k$
    (we may measure the relative volume for $P + kw$ and $Q + kw$
    because these polytopes are rational,
    and thus there are dilations of them which will contain integer points.)

    This shows $H = H'$,
    and a similar reasoning as before shows $M = M'$.
    Now let $A$ be a unimodular transform
    which maps $M$ to a subset of $\mathbb R^{d-1} \times \{0\}$;
    apply $A$ to both $P$ and $Q$,
    project the unimodular images to $\mathbb R^{d-1}$,
    and use this theorem for $d-1$ to conclude $P = Q$.
\end{proof}

\section{Final Remarks}

Theorem~\ref{thm:codimension-zero-and-one}
assumes knowledge of $L_{P + w}(s)$ for all integer $w$
and guarantees that $P$ is uniquely determined,
as long as $P$ is a full-dimensional semirational polytope.
We mention two open questions regarding this theorem.

The first question
(already hinted in Section~\ref{sec:non-rational-polytopes})
whether we may extend Theorem~\ref{thm:codimension-zero-and-one}
for all real polytopes.
We have the following conjecture.

\begin{conjecture}
    Let $P$ and $Q$ be two full-dimensional real polytopes
    such that $L_{P + w}(s) = L_{Q + w}(s)$
    for all integer $w$ and all real $s > 0$.
    Then $P = Q$.
\end{conjecture}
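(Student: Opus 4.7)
The plan is to follow the same high-level strategy as the proof of Corollary~\ref{thm:semirational-complete-invariant}: recover the facet normals $a_i$ of $P$ from the data $\{L_{P+w}(s)\}_{w \in \mathbb Z^d}$, and then recover the offsets $b_i$ by an appropriate adaptation of the discontinuity analysis in Section~\ref{sec:discontinuities-of-ehrhart-function}. The novelty compared to the semi-rational setting is that the $a_i$ are no longer assumed to be integer vectors, so we cannot rely on the integrality of $\langle a_i, w \rangle$ anywhere, and the arithmetic regularity of the discontinuity points of $L_{P+w}(s)$ is lost.

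For Step 1 (recovering the facet normals), the natural tool is the pseudopyramid machinery. By Lemma~\ref{thm:different-pseudopyramid-volumes} and the asymptotic $L_{\ppyr(P+w)}(s)/s^d \to \vol \ppyr(P+w)$, the Ehrhart data determines $\vol \ppyr(P+w)$ for every integer $w$. By Lemma~\ref{thm:pseudopyramid-decomposition}, this quantity equals $\vol P$ plus a sum of pyramid volumes $\tfrac{1}{d} \area(F) \cdot \bigl(-b_F - \langle a_F, w\rangle\bigr)$ taken over the back facets of $P+w$ (assuming unit normals). As $w$ varies, the set of back facets changes, and the map $w \mapsto \vol\ppyr(P+w)$ becomes a continuous, piecewise polynomial function on $\mathbb R^d$ whose breaks occur exactly on the facet-supporting hyperplanes through the origin. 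The plan would be to extend this function to all real $w$ by interpolation from integer values (using, for instance, that it is determined by its values on any sufficiently dense set, together with known polynomial structure on each chamber), and then read off the hyperplanes of non-smoothness; this gives the $a_i$ up to scaling, and in fact the quantities $b_F$ and $\area(F)$ along the way.

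For Step 2 (recovering the offsets when the normals are already known), one repeats the discontinuity analysis: Lemma~\ref{thm:jump-magnitudes} applies without change, and the goal is to identify, for each facet $F_i$, an infinite sequence of dilations $s_k \to \infty$ which are discontinuity points \emph{known to be caused by $F_i$ alone}. The analogue of Lemma~\ref{thm:isolating-largest-vector} must now produce translation vectors $w$ such that the discontinuity scales $s(b_i + \langle a_i, w\rangle)^{-1} \mathbb Z$ coming from different facets separate in a controlled window $(\alpha_k, \alpha_k + \epsilon_k)$. This should still be possible by choosing $w_0$ generic with respect to the (now finitely many) real vectors $m a_i - m' a_j$, but the replacement for Lemma~\ref{thm:pseudodiophantine-equations} requires care, since one must argue that the system $s_k(b + k\langle a_1, w_0\rangle) \in \mathbb Z$ pins down $b$ uniquely even when $b$ is an arbitrary real number.

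The main obstacle, and the reason this is stated only as a conjecture, is precisely the lack of arithmetic structure on discontinuities of $L_{P+w}(s)$ when the facet normals are irrational. For a facet $F_i$ with irrational $a_i$, the set of integer points meeting $sF_i$ for some $s$ is arithmetically sparse, so jump magnitudes are typically $1$ and carry far less information than in the semi-rational case; in particular, the uniform limit in Lemma~\ref{thm:limit-is-relative-volume} fails badly. This means the key step of identifying the ``largest-normal'' facet by the size of its discontinuity jumps, which drives the induction in Theorem~\ref{thm:recovering-right-hand-sides}, has no direct analogue. Overcoming this likely requires either a genuinely new invariant extractable from $L_{P+w}(s)$ that is sensitive to the direction of irrational facets (perhaps of a Diophantine or equidistribution flavour), or an indirect attack through the pseudopyramid volume function of Step 1, using it to pin down the polytope without ever analysing individual jumps.
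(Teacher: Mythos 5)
This statement is a \emph{conjecture} in the paper; no proof is given there, and your proposal (rightly) does not claim to close the gap either --- you explicitly identify the loss of arithmetic structure for irrational facet normals as ``the reason this is stated only as a conjecture.'' So there is nothing to compare against: the honest verdict is that neither the paper nor your proposal proves the statement. Your diagnosis of where the paper's machinery breaks (Lemmas~\ref{thm:limit-is-relative-volume}, \ref{thm:isolating-largest-vector} and~\ref{thm:pseudodiophantine-equations} all lean on integrality of the $a_i$) is accurate and matches the paper's own framing in Section~\ref{sec:non-rational-polytopes}.

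One concrete problem with your Step 1, though, beyond the admitted obstacles of Step 2. The function $w \mapsto \vol \ppyr(P+w) - \vol P = \tfrac1d \sum_F \area(F)\max\bigl(0, -(b_F + \langle a_F, w\rangle)\bigr)$ is indeed convex and piecewise linear (and its breaks lie on the hyperplanes $\langle a_F, w\rangle = -b_F$, which are generally \emph{not} through the origin, contrary to what you wrote). But your plan to ``extend this function to all real $w$ by interpolation from integer values'' fails in general: a finite sum of hinge functions is not determined by its restriction to $\mathbb Z^d$. Already in one variable, $g_1(w) = \max\bigl(0, 2(w - \tfrac12)\bigr)$ and $g_2(w) = \max\bigl(0, w - \tfrac14\bigr) + \max\bigl(0, w - \tfrac34\bigr)$ agree at every integer but differ on $(\tfrac14, \tfrac34)$; the integer data pins down the affine pieces on unbounded chambers containing enough lattice points, but not the break locations hiding between them. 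So even granting that $\vol\ppyr(P+w)$ is recoverable for each integer $w$ (which it is, via Lemma~\ref{thm:different-pseudopyramid-volumes}), reading off the facet normals from the non-smoothness locus requires an additional argument --- e.g.\ exploiting that the hinges here come from the facets of a single convex body, or replacing $w$ by $kw$ and letting $k\to\infty$ to probe directions --- that you have not supplied. As it stands, both steps of the proposal have genuine gaps, consistent with the statement's status as an open problem.
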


The second question is whether we need to know $L_{P + w}$ for \emph{all} $w$.
For the sake of naming,
let us call a certain set $W \subseteq \mathbb Z^d$ a \emph{witness set}
if $L_{P + w}(s) = L_{Q + w}(s)$ for all $w \in W$ implies $P = Q$;
for example,
Theorem~\ref{thm:rational-complete-invariant}
says that $W = \mathbb Z^d$ is a witness set for the class of rational polytopes.

\begin{question}
    \label{q:sinai}
    Does there exist a finite witness set
    (for example, for the class of rational polytopes)?
\end{question}

\subsection{Acknowledgements}
The author wants to thank Sinai Robins for reading an early version of this text
and suggesting Question~\ref{q:sinai}.

\bibliographystyle{plainurl}
\bibliography{bib}

\end{document}